\newcommand{\bk}{\color{black}}
\newcommand{\Var}{\text{Var}}
\newcommand{\vj}{\widehat{\mathrm{Var}}_{\mathrm{JACK}}}
\newcommand{\aibar}{\bar{\alpha}_n}
\newcommand{\aibarj}{\bar{\alpha}_{nj}}
\newcommand{\bibar}{\bar{\beta}_n}
\newcommand{\Aibar}{\bar{A}_n}
\newcommand{\Bibar}{\bar{B}_n}
\newcommand{\cp}{\stackrel{P}{\rightarrow}}
\newcommand{\cov}{\mathrm{cov}}
\newcommand{\var}{\mathrm{Var}}
\newcommand{\df}{\nabla f(\mu)}
\newenvironment{proofsketch}{%
  \proof}{\endproof}
\newcommand{\norm}[1]{\left\lVert#1\right\rVert}
\newtheorem{theorem}{Theorem}
\newtheorem*{theorem*}{Theorem}
\newtheorem{lemma}[theorem]{Lemma}
\newtheorem{proposition}{Proposition}
\newtheorem{remark}{Remark}
\begin{document}

\begin{frontmatter}
\title{On the Theoretical Properties of the Network Jackknife}
\runtitle{}

\begin{aug}
\author{\fnms{Qiaohui} \snm{Lin,} \ead[label=e1]{qiaohui.lin@utexas.edu}}
\author{\fnms{Robert} \snm{Lunde,}\ead[label=e1]{rlunde@utexas.edu}}
and
\author{\fnms{Purnamrita Sarkar}\ead[label=e2]{purna.sarkar@austin.utexas.edu}}
\affiliation{The University of Texas at Austin}

\runauthor{Q.Lin,R.Lunde and P.Sarkar}
\runtitle{Jackknifing Networks}



\address{Department of Statistics and Data Science\\
University of Texas at Austin}

\end{aug}

\begin{abstract}
We study the properties of a leave-node-out jackknife procedure for network data. Under the sparse graphon model, we prove an Efron-Stein-type inequality, showing that the network jackknife leads to conservative estimates of the variance (in expectation) for any network functional that is invariant to node permutation. For a general class of count functionals, we also establish consistency of the network jackknife. We complement our theoretical analysis with a range of simulated and real-data examples and show that the network jackknife offers competitive performance in cases where other resampling methods are known to be valid. In fact, for several network statistics, we see that the jackknife provides more accurate inferences compared to related methods such as subsampling.
\end{abstract}

\end{frontmatter}

\section{Introduction}
\label{sec:intro}
 Network-structured data are now everywhere. The Internet is a giant, directed network of webpages pointing to other webpages. Facebook is an undirected network  built via friendships between users. The ecological web is a directed network of different species with edges specified by `who-eats-whom' relationships.  Protein-protein interactions are undirected networks consisting of pairs of bait-prey proteins that bind to each other during coaffinity purification experiments arising in mass spectrometry analysis.   
 
 
 In these application areas,  it is often of interest to characterize a network using statistics such as the clustering coefficient, triangle density, or principal eigenvalues. There has been a substantial amount of work on approximating these quantities with small error on massive networks~\cite{SUBG:Feige-average-degree,SUBG:Goldreich-Ron-Approximating-Average-Parameters,SUBG:assadi-sublinear-arbitrary-subgraph,SUBG:eden-sublinear-triangles,SUBG:gonen-sublinear-stars,Kallaugher:2019:CCC:3294052.3319706}. However, comparatively little attention has been paid to assessing the variability of these statistics with a few exceptions that we will discuss shortly.  Quantifying the uncertainty of these estimators is of utmost importance, as it gives us information about the underlying variability of the data generating process. Take for example the problem of comparing two networks, which is a key question in many biological applications and in social network analysis.  A natural direction  would be to first obtain resamples of networks to construct distributions of different summary statistics and then compare these distributions. While there has been some recent interest in two-sample tests for networks~\cite{kim2014,durante2018,two-sample-test-network-statistics,tang-nonparametric-two-sample-test}, very few works use resampling to compare networks. 
 
 

Resampling methods have a long and celebrated history in statistics, with the bootstrap, jackknife,and subsampling being the three main forms. There is a now vast literature developing these methods for iid data;  for seminal works in this area, see  \cite{quenouille-jackknife,efron1986,Bickel-m-out-of-n,Politis-Romano-Wolf-subsampling,shao-wu-jackknife-variance}. 
Even when the data are not independent, resampling methods have been shown to yield asymptotically valid inferences for a wide range of functionals under various dependence structures. For weakly dependent time series, for example, the key innovation is to resample contiguous blocks of data instead of individual observations. Under mild conditions on the block length and nature of dependence, blocked variants of resampling methods, including the block bootstrap \citep{Kunsch-bootstrap-for-stationary-obs}, block subsampling \citep{politis-romano-subsampling-minimal-assumptions}, and the blockwise jackknife \citep{Kunsch-bootstrap-for-stationary-obs} have been shown to asymptotically capture the dependence structure of the data, leading to theories that closely resemble the corresponding theories for iid data.

Recently, some work has started to emerge involving resampling procedures for networks. \citet{levin-levina-rdpg-bootstrap} propose two bootstrap procedures for random dot product graphs  that involve estimating the latent positions and resampling the estimated positions to conduct inference for the functional of interest.  The authors establish bootstrap consistency for functionals that are expressible as U-statistics of the latent positions, which encompasses many important classes of functionals including subgraph counts.  

\citet{subsampling-sparse-graphons} consider a procedure that involves subsampling nodes and computing functionals on the induced subgraphs. This procedure is shown to be asymptotically valid under conditions analogous to the iid setting; that is, the subsample size must be $o(n)$ and  the functional of interest must converge to a non-degenerate limit distribution. Previously, \citet{Bhattacharyya-subsample-count-features} had shown the validity of subsampling for count functionals. By proving a central limit theorem for eigenvalues, \citet{subsampling-sparse-graphons} also establish subsampling validity for these functionals under certain conditions.  Finally, in \citet{green2017boot}, the authors propose sieve and nonparametric bootstrap procedures for networks. 

We would like to note that that both the sieve approach of \citet{green2017boot} and the latent position estimation approaches of \citet{levin-levina-rdpg-bootstrap} depend on accurately estimating the underlying graphon. The nonparametric bootstrap procedure described in \citet{green2017boot} requires resampling much larger networks from a size $n$ network, leading to computational inefficiency. Even subsampling requires weak convergence and a known rate of convergence; it turns out that the latter may be estimated \citep{bertail-politis-romano-subsampling-estimated-rate}, but doing so entails a substantial increase in computation and is likely to adversely affect the finite-sample performance of the procedure. While asymptotically valid under general conditions, the finite-sample performance of subsampling methods is known to be sensitive to the choice of tuning parameters; see for example, \cite{blbjrssb}. 

\subsection{Our Contribution}
In the present work, we study the properties of a network jackknife introduced by \citet{franks-snijders-hidden-pop-snowball-sampling} under the sparse graphon model. On the theoretical side, we make two primary contributions.  First, analogous to the iid setting, we show that the network jackknife produces variance estimates that are conservative in expectation under general conditions.  Our result here justifies the network jackknife as a rough-and-ready tool that produces reasonable answers (erring on the side of caution) even when the asymptotic properties of the functional of interest are poorly understood.

While the upward bias of the network jackknife is a favorable property, it does not provide information as to how the jackknife compares to other resampling procedures for more well-understood functionals.  As another theoretical contribution, we establish consistency of the jackknife for a general class of count statistics studied in \citet{Bickel-Chen-Levina-method-of-moments}.  We also extend our result to smooth functions of counts, which encompasses widely used measures such as the transitivity coefficient.

We complement our theoretical results with an empirical investigation of the network jackknife on both simulated and real datasets.  In our simulation study, we study the rate of convergence of the jackknife variance estimate for two sparse graphon models under a range of choices for the network functional. Our results suggest that by and large, the jackknife has better finite-sample properties than subsampling. For real data, we conduct network comparisons of Facebook networks constructed from a number of different colleges such as Caltech, Berkeley, Stanford, Wellesley, etc. 

The paper is organized as follows. In Section~\ref{sec:background}, we do problem setup and introduce notation. In Section~\ref{sec:theory}, we present our theoretical results and some proof sketches. Finally in Section~\ref{sec:exp} we present experimental results on simulated and real networks.

\section{Background}
\label{sec:background}
We first briefly recall the original Jackknife for iid data, followed by a description of the sparse graphon model. We conclude this section with the network jackknife procedure.
\subsection{The Jackknife for IID Data}
The jackknife, attributed to \citet{quenouille-jackknife} and \citet{Tukey-jackknife}, is a resampling procedure that involves aggregating leave-one-out estimates.  More precisely, let $X_1, \ldots, X_n \sim P$ and let $S_n$ be a permutation-invariant function of $n$ variables.  Furthermore, let $S_{n,i}$ denote the functional computed on the dataset with $X_i$ removed and let $\bar{S}_{n} = \frac{1}{n}\sum_{i=1}^n S_{n,i}$. The jackknife estimate of the variance of $S_{n-1} = S(X_1, \ldots, X_{n-1})$ is given by:
  \begin{align}
\vj \ S_{n-1} := \sum_{i=1}^n (S_{n,i} - \bar{S}_{n})^2
\end{align} 

%
%
For appropriately smooth functionals, it is well-known that the jackknife consistently estimates the variance; see for example, \citet{shao-tu-bootstrap-jackknife}. The bootstrap, introduced by \citet{efron-jackkknife-bootstrap}, typically requires weaker regularity conditions than the jackknife for consistency. 
In fact, it is well-known that the jackknife is inconsistent for the median \citep{miller-jackknife} while the bootstrap variance remains consistent under reasonable conditions \citep{ghosh1984}\footnote{It should be noted that the delete-d jackknife is valid under more general conditions; see \citet{shao-wu-jackknife-variance}.}. 

However, for more complicated functionals, it may often be the case that both the bootstrap and the jackknife are inconsistent\footnote{Recent work by \citet{fang-santos-directionally-differentiable} suggests that Hadamard differentiability of $g$ is both necessary and sufficient for bootstrap consistency of $g(\hat{\theta}_n)$ whenever $\hat{\theta}_n$ is asymptotically Gaussian.}.
Even in these cases, the jackknife still provides reasonable answers. The remarkable inequality of \citet{efron-stein-jackknife} asserts that the jackknife is always upwardly biased, ensuring a conservative estimate of the variance. 


Since networks are inherently high-dimensional objects, asymptotic results are often harder to come by compared to the iid setting.  The theory of the jackknife for iid processes suggests that, even in this challenging regime, a network analogue of the jackknife may have some advantageous properties.  Before delving into our method, we introduce the network model under consideration below.
\subsection{The Sparse Graphon Model} 
The model below is due to \citet{Bickel-Chen-on-modularity}. Let $\{A^{(n)}\}_{n \in \mathbb{N}}$ denote a sequence of $n \times n$ adjacency matrices and $\xi_1, \xi_2, \ldots \xi_n \sim \mathrm{Uniform}[0,1]$.  Furthermore, for $i \neq j$, let  $\eta_{ij} \sim \mathrm{Uniform}[0,1]$ and define $A_{ij}^{(n)}$ as follows:
\begin{align}
\label{eq:sparse-graphon}
\begin{split}
    A_{ij}^{(n)} = A_{ji}^{(n)} &=\mathbbm{1}(\eta_{ij}\leq \rho_n w(\xi_i, \xi_j) \wedge 1)\sim \mathrm{Bernoulli}(\rho_n w(\xi_i, \xi_j) \wedge 1) 
\end{split}
\end{align} 
\bk
where $w:[0,1]^2 \mapsto \mathbb{R}_{\geq 0}$ is a symmetric measurable function that satisfies $\int_0^1 \int_0^1 w(u,v) \ du \ dv = 1$ and $\rho = \{\rho_n\}_{n \in \mathbb{N}} \in [0,1]^{\mathbb{N}}$.  When $\norm{\rho_n w(u,v)}_\infty \leq 1$, $\rho_n$ may be interpreted as the probability of an edge and $w$ as the conditional density of the latent positions given an edge.   For $i = j$, we will assume that $A_{ii}^{(n)} = 0$. We refer to $w$ as a graphon, the pair $(\rho, w)$ as a sparse graphon, and (\ref{eq:sparse-graphon}) as graphs generated by the sparse graphon model.

Bounded graphons arise as a limiting object in the theory of graph limits; see \citet{Lovacz-Graph-Limits}. Alternatively, graphons are a natural representation for (infinite-dimensional) jointly exchangeable arrays, where this notion of exchangeability corresponds to invariance under vertex permutation; see for example, \citet{diaconis-janson-exchangeable-graph}. Bounded graphons subsume many other commonly studied network models, including stochastic block models \citep{holland-sbm} and random dot product graphs \citep{young-schneiderman-rdpg}.

For applications, one major issue with graphons is that they generate dense graphs, where the expected number of edges is $O(n^2)$. However, most real-world graphs are known to be sparse in the sense that the number of edges is $o(n^2)$.  Here, the sequence $\rho$ determines the sparsity level; by letting $\rho_n \rightarrow 0$ at an appropriate rate, we may generate a graph sequence with the desired sparsity properties.  Furthermore, notice that in our formulation above, $w$ is allowed to be unbounded.  As noted in \citet{borgs-lp-part-one}, unbounded graphons are more expressive than bounded graphons, allowing graphs that exhibit power law degree distributions.

\subsection{The Network Jackknife Procedure}                        
Let $f:\{0,1\}^{ \ n-1 \ \times \ n-1} \mapsto \mathbb{R}$ denote a function that takes as input a $n-1\times n-1$ adjacency matrix and let $Z_{n,i}$ denote the random variable formed by applying $f$ to an induced subgraph with node $i$ removed. Under the model (\ref{eq:sparse-graphon}), observe that each induced subgraph formed by leaving a node out is identically distributed as a consequence of vertex exchangeability.  Therefore, functionals calculated on these induced subgraphs are similar in spirit to the the leave-one-out estimates for the jackknife in the iid setting.  Following \citet{franks-snijders-hidden-pop-snowball-sampling}, a natural generalization of the jackknife to the sparse graphon setting is given by:
\begin{align}
\vj \ Z_{n-1} := \sum_{i=1}^n (Z_{n,i} - \bar{Z}_{n})^2
\end{align}
where $\bar{Z}_n = \frac{1}{n}\sum_{i=1}^n Z_{n,i}$ and  $\vj\   Z_{n-1}$ is an estimate of $\mathrm{Var} \  Z_{n-1}$, the variance with respect to an induced subgraph with node set $\{1,\ldots, n-1\}$. We would like to note that letting $Z_{n-1} := Z_{n,n}$ constitutes a slight abuse of notation since $\rho_{n-1}$ need not equal $\rho_n$, but doing so substantially improves the readability of our proofs.

\section{Theoretical Results}
\label{sec:theory}
\subsection{The Network Efron-Stein Inequality}
The first result we state here is our generalization of the Efron-Stein inequality to the network setting.  Intuitively, the Efron-Stein inequality may be thought of as a general property of functions of independent random variables. While edges in the adjacency matrix are dependent through the latent positions, the fact that they are functions of independent random variables allow us to prove the following:

\begin{theorem}[Network Efron-Stein Inequality]
\label{thm:netefron}
\begin{align}
\label{eq:network-efron-stein}
\mathrm{Var} \ Z_{n-1} \leq E( \vj \ Z_{n-1})
\end{align} 
\end{theorem}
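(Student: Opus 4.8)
The plan is to reduce the inequality to a bound on the covariance between two leave-one-node-out functionals and then to settle that bound with an analysis-of-variance decomposition indexed by node subsets. First I would record the algebraic identity $\vj\, Z_{n-1}=\frac{1}{2n}\sum_{i\neq j}(Z_{n,i}-Z_{n,j})^2$, obtained by expanding the square. Since the induced subgraphs are jointly exchangeable, $(Z_{n,i},Z_{n,j})$ has the same law for all $i\neq j$, so $E(\vj\, Z_{n-1})=\frac{n-1}{2}E[(Z_{n,1}-Z_{n,2})^2]=(n-1)\big(\var\, Z_{n-1}-\cov(Z_{n,1},Z_{n,2})\big)$, using $Z_{n-1}\stackrel{d}{=}Z_{n,1}\stackrel{d}{=}Z_{n,2}$. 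Hence (\ref{eq:network-efron-stein}) is equivalent to the covariance bound $(n-1)\cov(Z_{n,1},Z_{n,2})\le (n-2)\var\, Z_{n-1}$. To identify the covariance, let $\mathcal{C}$ denote the $\sigma$-field generated by the variables producing the common induced subgraph on nodes $\{3,\dots,n\}$, namely $\xi_3,\dots,\xi_n$ and $\{\eta_{jk}:3\le j<k\le n\}$. Conditionally on $\mathcal{C}$, the functional $Z_{n,1}$ depends only on $(\xi_2,\eta_{23},\dots,\eta_{2n})$ and $Z_{n,2}$ only on $(\xi_1,\eta_{13},\dots,\eta_{1n})$; these blocks are mutually independent and independent of $\mathcal{C}$, and they enter through the same measurable map because $f$ is invariant to node permutation. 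So $Z_{n,1}$ and $Z_{n,2}$ are conditionally i.i.d.\ given $\mathcal{C}$, whence $\cov(Z_{n,1},Z_{n,2})=\var\big(E[Z_{n,1}\mid\mathcal{C}]\big)$; by the law of total variance the covariance bound becomes $E\big[\var(Z_{n,1}\mid\mathcal{C})\big]\ge\frac{1}{n-1}\var\, Z_{n,1}$.

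To prove this last inequality I would use an ANOVA decomposition of $Z_{n,1}$, a functional of the latent positions $\xi_k$ and edge variables $\eta_{k\ell}$ for $k,\ell\in\{2,\dots,n\}$. Grouping the terms of its Hoeffding decomposition by the set of nodes their inputs involve yields $Z_{n,1}=\sum_{B\subseteq\{2,\dots,n\}}g_B$ where $g_B$ is a function of the data attached to $B$ (the $\xi_k$ for $k\in B$ and the $\eta_{k\ell}$ for $k,\ell\in B$) and $E[g_B\mid\text{data attached to }B']=0$ whenever $B\not\subseteq B'$; consequently the $g_B$ are orthogonal and $\var\, Z_{n,1}=\sum_{B\neq\emptyset}\var\, g_B$. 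Vertex exchangeability (any relabeling of $\{2,\dots,n\}$ is measure preserving and fixes $Z_{n,1}$, so it permutes the $g_B$) forces $\var\, g_B=\sigma_{|B|}^2$ to depend only on $|B|$, hence $\var\, Z_{n,1}=\sum_{c=1}^{n-1}\binom{n-1}{c}\sigma_c^2$. Conditioning on $\mathcal{C}$ annihilates exactly the components with $2\in B$, so $E[\var(Z_{n,1}\mid\mathcal{C})]=\var\, Z_{n,1}-\var(E[Z_{n,1}\mid\mathcal{C}])=\sum_{c=1}^{n-1}\big(\binom{n-1}{c}-\binom{n-2}{c}\big)\sigma_c^2=\sum_{c=1}^{n-1}\binom{n-2}{c-1}\sigma_c^2$, and the identity $\binom{n-2}{c-1}=\frac{c}{n-1}\binom{n-1}{c}\ge\frac{1}{n-1}\binom{n-1}{c}$ (valid since $c\ge1$) delivers $E[\var(Z_{n,1}\mid\mathcal{C})]\ge\frac{1}{n-1}\var\, Z_{n,1}$, completing the argument.

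The main obstacle is the ANOVA step. Its combinatorial core is harmless---a single fresh node always accounts for at least a $1/(n-1)$ fraction of the total variance---but making the node-indexed decomposition rigorous takes care, since each edge variable $\eta_{k\ell}$ is shared by nodes $k$ and $\ell$, so ``the data attached to a node set'' lacks a clean product structure and one must verify the vanishing relations $E[g_B\mid\text{data attached to }B']=0$ and the distributional invariance of the $g_B$ by hand. By contrast, the covariance reduction and the conditional-independence identity of the first paragraph are routine.
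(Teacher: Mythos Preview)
Your argument is correct, and it follows a genuinely different route from the paper's. The paper adapts the martingale proof of Rhee and Talagrand: it builds a filtration $\Sigma_i$ that reveals one node at a time (its latent position together with all edges to previously revealed nodes), writes $\var Z_{n-1}=\sum_i E d_i^2$ for the associated martingale differences, and then shows for each $i$ that $E(Z_{n,1}-Z_{n,2})^2\ge 2Ed_i^2$ by conditioning on the common block $\Sigma_{3:i+1}$ and exploiting conditional independence of the two ``fresh'' rows. Your proof instead reduces to the single covariance inequality $(n-1)\cov(Z_{n,1},Z_{n,2})\le (n-2)\var Z_{n-1}$ and settles it with a node-indexed Hoeffding/ANOVA decomposition, which is closer in spirit to Efron and Stein's original argument. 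The martingale route avoids having to construct the orthogonal decomposition and is perhaps more robust to generalizations; your ANOVA route is more explicit and immediately reveals that equality holds iff $\sigma_c^2=0$ for all $c\ge 2$, i.e., iff the functional is additive over nodes.

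Regarding your worry about rigor in the ANOVA step: the cleanest construction is M\"obius inversion over node subsets, $g_B=\sum_{B'\subseteq B}(-1)^{|B\setminus B'|}E[Z_{n,1}\mid\mathcal{F}_{B'}]$, where $\mathcal{F}_{B'}$ is generated by the $\xi_k$, $k\in B'$, and the $\eta_{jk}$, $j,k\in B'$. The only property you need is $E[\,E[Z\mid\mathcal{F}_A]\mid\mathcal{F}_B\,]=E[Z\mid\mathcal{F}_{A\cap B}]$, and this holds because the underlying uniforms are independent and the generating sets satisfy (data attached to $A$) $\cap$ (data attached to $B$) $=$ (data attached to $A\cap B$): a $\xi_k$ lies in both iff $k\in A\cap B$, and an $\eta_{jk}$ lies in both iff $j,k\in A\cap B$. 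The vanishing relations and orthogonality then follow by the usual sign-pairing argument, and the distributional invariance $g_B\stackrel{d}{=}g_{\pi(B)}$ comes from applying a node permutation to the defining formula and using that $Z_{n,1}$ is invariant while the uniforms are i.i.d. So the obstacle you flagged is not serious.
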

The main ingredients in our proof are an adaptation of a martingale argument due to \citet{rhee-talagrand-martingale-approach-jackknife} and an appropriate filtration for graphon models inspired by \citet{borgs-convergence-dense-graphons-1}.  We provide a proof sketch below; for details, see Supplement Section~\ref{sec:suppES}.
\begin{proofsketch}
As discussed in the Supplementary Material, for $1 \leq i \leq n$, we may express $Z_{n,i}$ as a measurable function of latent positions $\xi_i \sim \mathrm{Unif}[0,1]$ for $1 \leq i \leq n$ and $\eta_{ij} \sim \mathrm{Unif}[0,1]$ for $1 \leq i < j \leq n$.
More precisely, $Z_{n,i}$ is a function of the variables that are not shaded below:
\begin{equation}
     Z_{n,i}=g\left(
\begin{array}{cccc>{\columncolor{gray!20}}ccc}
     \xi_1 &\eta_{12} &\eta_{13} &... &\eta_{1i} &...&\eta_{1n}\\
      &\xi_2 &\eta_{23} &...&\eta_{2i} &...&\eta_{2n}\\
      & & \xi_3 &...&\eta_{3i} &... &\eta_{3n}\\
      &&&...&...&...&...\\
       \rowcolor{gray!20}
      &&&\xi{i} &\eta_{i,i+1} &..&\eta_{in}\\
      &&&&&\xi_{n-1} &\eta_{n-1,n}\\
      &&&&&&\xi_{n}
\end{array}\right).
\end{equation}

We design a martingale difference sequence $d_i$,
\begin{align}
\label{eq: mg-diff}
    d_i=E(Z_{n-1}|\Sigma_i)-E(Z_{n-1}|\Sigma_{i-1}),
\end{align}
based on filtration $\Sigma_i$:
\begin{align}
\label{eq:mg-filtration}
\begin{split}
    \Sigma_i &=\sigma\{\xi_1,,\xi_i,\eta_{12},,\eta_{1i},\eta_{23},,\eta_{2i},,,\eta_{i-1,i}\}\\
    &= \sigma \left\{\begin{array}{ccccc}
   \xi_1 & \eta_{12}&... &\eta_{1,i-1} &\eta_{1i}\\
      &\xi_2 &...&\eta_{2,i-1} & \eta_{2i}\\
       & &...&..\\
      & &\xi_{i-2} &\eta_{i-2,i-1},& \eta_{i-2,i}\\
      &&&\xi_{i-1} & \eta_{i-1,i}\\
      &&&&\xi_{i}
\end{array}\right\}.
\end{split}
\end{align}
Then we can show that, 
$$\var \ Z_{n-1} = \sum_{i=1}^{n-1} E d_i^2.$$ 

On the other hand, the expectation of Jackknife estimate is: 
\begin{equation}\label{eq:mg-rewrite}
    E\sum_{i=1}^n(Z_{n,i}-\bar Z_n)^2=(n-1)\frac{E(Z_{n,1}-Z_{n,2})^2}{2}.
\end{equation}

Now, we construct another filtration $\mathcal{A}$ such that $E(Z_{n,1}|\mathcal{A})=E(Z_{n,2}|\mathcal{A})$. In particular, we use: 
\begin{equation}
      \mathcal{A}=\sigma\{\xi_3,\ldots,\xi_{i+1},\eta_{34},\ldots,\eta_{3,i+1},\ldots,\eta_{i,i+1}\}.
\end{equation}
This is essentially $\Sigma_{i+1}$, with the first and second row and columns removed. 
 Define 
 \begin{align*}
     U&=E(Z_{n,1}|\Sigma_{i+1})-E(Z_{n,1}|\mathcal{A})\\
     V&=E(Z_{n,2}|\Sigma_{i+1})-E(Z_{n,2}|\mathcal{A})
 \end{align*}
Using the fact that $E(X^2)=E(E(X^2|\Sigma))\geq E(E[X|\Sigma)^2)$ for some random variable $X$ which is measurable w.r.t to some Sigma field $\Sigma$, we get:
\begin{equation}
\label{eq: mg-result}
E(Z_{n,1}-Z_{n,2})^2 \geq  E(U-V)^2 = 2E(d_i^2)
\end{equation}
The result follows from plugging in Eq~\ref{eq: mg-result} to Eq~\ref{eq:mg-rewrite}. 
\end{proofsketch}
\begin{remark}
Using the aforementioned filtration for graphon models, is also possible to prove another network variant of the Efron-Stein inequality following arguments in \citet{boucheron-lugosi-massart-concentration-chapter}.  This alternative procedure does not require the functional to be invariant to node permutation and allows flexibility with the leave-one-out estimates.  However, the resulting estimate is often not sharp.  See the Supplement Section~\ref{sec:addlth} for more details. 
\end{remark}  


\subsection{Beyond the Efron-Stein inequality}
While the Efron Stein inequality in Theorem~\ref{thm:netefron} is surprising and useful for estimating uncertainty for network statistics, it would be much more satisfying if indeed the jackknife estimate of variance in fact coincided with the true underlying variance, at least asymptotically. We want to draw the attention of the reader to leftmost panel in Figure~\ref{fig:ratio}. The solid black line shows the mean and standard error of the ratio between the jackknife estimate of the variance and the true variance for edge density for a blockmodel and a smooth graphon (details in Section~\ref{sec:exp}), as graph size grows. This figure shows the surprising trend that, in fact, the jackknife estimate is not only an upper bound on the true variance; it is in fact asymptotically unbiased. Our next proposition establishes exactly that.  In what follows, let $Z_n$ denote the edge density (see Section \ref{sec:exp}).

\begin{proposition}\label{prop:degree}
Suppose that $\int_0^1 \int_0^1 w^2(u,v) \ du \ dv < \infty $ and $n\rho_n \rightarrow \infty$. Let $\sigma^2 = \lim_{n \rightarrow \infty} n \cdot \var(Z_{n-1})$.  Then,
\begin{align}\label{eq:prop1}
n \cdot E(\vj \ Z_{n-1}) \rightarrow \sigma^2
\end{align}
\end{proposition}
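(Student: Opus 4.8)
The plan is to use that the edge density is, up to a known normalizing constant, a U-statistic of order two in the latent positions $\xi_1,\dots,\xi_n$ with a conditionally Bernoulli layer on top. This makes both $\var(Z_{n-1})$ and $E(\vj\, Z_{n-1})$ exactly computable as explicit functions of $n$, $\rho_n$, and a handful of graphon moments, and the proposition follows by showing that the gap $n\,E(\vj\, Z_{n-1}) - n\,\var(Z_{n-1})$ (nonnegative by Theorem~\ref{thm:netefron}) vanishes, i.e.\ that the jackknife's bias is of smaller order than $n^{-1}$.

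For the jackknife side, I would begin from the identity already obtained in the proof of Theorem~\ref{thm:netefron}, $E(\vj\, Z_{n-1}) = \tfrac{n-1}{2}\,E\big[(Z_{n,1}-Z_{n,2})^2\big]$. Since $Z_{n,1}$ and $Z_{n,2}$ share every edge among the common node set $\{3,\dots,n\}$, their difference telescopes to $Z_{n,1}-Z_{n,2} = c_{n}\sum_{j=3}^{n}(A_{2j}-A_{1j})$ for an explicit constant $c_n$ (we write the $\rho_n$-rescaled edge density, so $c_n \asymp (\rho_n n^2)^{-1}$; without the rescaling one simply drops the $\rho_n^{-2}$ factors below, which is even simpler). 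Expanding the square, taking expectations, and conditioning on $\xi_1,\dots,\xi_n$—so the edges become independent Bernoulli variables—reduces everything to graphon moments. With $W_{ij} := \rho_n w(\xi_i,\xi_j)\wedge 1$, $\theta_n := E(W_{12})$, $d_n(u) := E(W_{12}\mid \xi_1=u)$, and $v_n := \var(d_n(\xi_1))$, this yields
\[
E(\vj\, Z_{n-1}) \;=\; \frac{4}{\rho_n^2(n-1)(n-2)}\Big[\theta_n - E\big(d_n(\xi)^2\big) + (n-3)\,v_n\Big].
\]

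Next I would compute $\var(Z_{n-1})$ by the same conditioning argument, decomposing $\var\big(\sum_{j<k\le n-1}A_{jk}\big)$ into single-edge variances and pairwise covariances: edge pairs with disjoint index sets are uncorrelated, while each of the $(n-1)\binom{n-2}{2}$ unordered vertex-sharing pairs contributes $v_n$, giving $\var(Z_{n-1}) = \rho_n^{-2}\big[\tfrac{2(\theta_n-\theta_n^2)}{(n-1)(n-2)} + \tfrac{4(n-3)}{(n-1)(n-2)}v_n\big]$. Subtracting the two closed forms, the $v_n$ terms—which carry the leading behavior and hence the limit $\sigma^2$—cancel exactly, leaving
\[
n\,E(\vj\, Z_{n-1}) - n\,\var(Z_{n-1}) \;=\; \frac{2n}{\rho_n^2(n-1)(n-2)}\Big[\theta_n + \theta_n^2 - 2\,E\big(d_n(\xi)^2\big)\Big].
\]
To conclude, I bound the bracket via the elementary inequalities $\theta_n \le \rho_n$ and $\theta_n^2 = \big(E\,d_n(\xi)\big)^2 \le E\big(d_n(\xi)^2\big) \le E(W_{12}^2) \le E(W_{12}) = \theta_n$, which give $\big|\theta_n + \theta_n^2 - 2E(d_n(\xi)^2)\big| \le \theta_n \le \rho_n$; since $\tfrac{n}{(n-1)(n-2)}\sim n^{-1}$, the right-hand side is $O\big(1/(n\rho_n)\big) = o(1)$ because $n\rho_n\to\infty$. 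Combined with the assumed convergence $n\,\var(Z_{n-1})\to\sigma^2$—which is finite since $\int_0^1\int_0^1 w^2 <\infty$ forces $v_n \le \rho_n^2\int_0^1\int_0^1 w^2$—this establishes $n\,E(\vj\, Z_{n-1})\to\sigma^2$.

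I do not anticipate a genuinely hard step: once the combinatorics are in place the argument is exact algebra plus elementary moment bounds. The two places requiring mild care are (i) the truncation $\rho_n w\wedge 1$ in the unbounded-graphon regime, which should be handled through the one-sided bounds $W_{12}\le\rho_n w(\xi_1,\xi_2)$ and $d_n(u)\le\rho_n\int_0^1 w(u,v)\,dv$ rather than by expansion—this is where the hypothesis $\int_0^1\int_0^1 w^2<\infty$ enters—and (ii) the combinatorial bookkeeping in $\var\big(\sum_{j<k}A_{jk}\big)$, in particular recognizing that only vertex-sharing edge pairs contribute and counting them correctly.
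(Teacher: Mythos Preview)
Your proof is correct. The computations of $E(\vj\,Z_{n-1})$ and $\var(Z_{n-1})$ check out line by line, the exact cancellation of the $(n-3)v_n$ terms is genuine, and the moment bound $|\theta_n+\theta_n^2-2E(d_n(\xi)^2)|\le\theta_n\le\rho_n$ is valid via the Jensen chain $\theta_n^2\le E(d_n(\xi)^2)\le E(W_{12}^2)\le\theta_n$.

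Your route differs from the paper's in organization. The paper decomposes both sides through degree variances and covariances, writing $E(\vj\,Z_{n-1})=\sum_i\var(D_i^{(n)}/\gamma_n)-\tfrac{1}{n}\sum_{i\ne j}\cov(D_i^{(n)}/\gamma_n,D_j^{(n)}/\gamma_n)$ and $\var(Z_{n-1})=\tfrac14(n-1)\var(D_i^{(n-1)}/\gamma_n')+\tfrac14\sum_{i\ne j}\cov(\cdot,\cdot)$, then computes each piece via two separate lemmas using the law of total variance. The leading terms on both sides are identified as $4\var(E[w(\xi_i,\xi_k)\mid\xi_i])$ and matched, with a residual of order $n^{-1}+ (n\rho_n)^{-1}$. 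By contrast, you start from the pairwise identity $E(\vj)=\tfrac{n-1}{2}E(Z_{n,1}-Z_{n,2})^2$, telescope $Z_{n,1}-Z_{n,2}$ to $c_n\sum_{j\ge3}(A_{2j}-A_{1j})$, and count edge-pair moments directly. This yields exact closed forms for both quantities in which the dominant $v_n$ contribution cancels algebraically before any asymptotics are invoked; the remainder is then dispatched by the single inequality $|\theta_n+\theta_n^2-2E(d_n(\xi)^2)|\le\rho_n$. What your approach buys is transparency---no auxiliary lemmas, no $\asymp$ bookkeeping, and the bias is isolated exactly rather than term by term. What the paper's decomposition buys is modularity: the degree-variance and degree-covariance lemmas are reusable building blocks, and the structure generalizes more visibly to the higher-order count functionals treated later.
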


The proof of the above result involves tedious combinatorial arguments and is deferred to the Supplement Section~\ref{sec:suppEDunbiased}. The above proposition says that, the jackknife estimate of variance of the edge density of a sparse graphon model (see Eq~\ref{eq:sparse-graphon}), in expectation, converges to the true variance. This is a somewhat weak result, since it does not say anything about the jackknife estimate obtained from one network. However, it begs the question, whether a stronger result is true. In fact, in the next section, we prove that for a broad class of count functionals, the jackknife estimate is in fact consistent. This paves the way to the next section, which we start by introducing count functionals.

\subsection{Jackknife Consistency for Count Functionals}
\label{sec:consistency-counts}
In this section, we study the  properties of the jackknife for subgraph counts, which are an important class of functionals in network analysis.  In graph limit theory, convergence of a sequence of graphs can be defined as the convergence of appropriate subgraph frequencies \citep{Lovacz-Graph-Limits}. More practically, subgraph counts have been used to successfully conduct two-sample tests in various settings.  In social networks, for example, the frequency of triangles provides information about the likelihood of mutual friendships/connections and is therefore a useful summary statistic.  

The count functionals that we consider were first studied in \citet{Bickel-Chen-Levina-method-of-moments}.  We will now introduce some notation needed to define these functionals.
Let $G_n$ denote a graph with vertex set $V(G_n)=\{1, 2, \ldots,n\}$ and edge set $E(G_n) \subset V(G_n) \times V(G_n)$.
Let $R\subseteq E(G_n)$ denote the subgraph of interest parameterized by its edge set.  For convenience, we assume $ V(R) = \{ 1,2, \ldots, p \}$.  Furthermore, let $G_n[R]$ denote the subgraph induces by the vertices of $R$.

We consider two different types of count functionals. The first notion that we consider counts exact matches and has the following probability under the sparse graphon model: (Eq~\ref{eq:sparse-graphon}):

\begin{align}
\begin{split}
&P(R) = P(G_n[R] = R)   
\\ &= E\left[\prod_{(i,j) \in R} \rho_n w(\xi_i, \xi_j) \wedge 1  \prod_{(i,j) \in \overline{R}} (1-\rho_n w(\xi_i, \xi_j) \wedge 1)\right]
\end{split}
\end{align}
We also consider the functional $Q(R)$, which provides the probability of an induced subgraph containing the subgraph $R$: 
\begin{align}
Q(R) =  P( R \subseteq G_n(R)) = E\left[\prod_{(i,j) \in R} \rho_n w(\xi_i, \xi_j)\right]
\end{align}

Note that $Q(R)$ is agnostic to the presence of additional edges. When the graph sequence is sparse, $P(R)$ and $Q(R)$ are uninformative, as $P(R),  \ Q(R) \rightarrow 0$.  Let $e = |E(R)|$ and $p = |V(R)|$. Instead,  define the following normalized subgraph frequency:
\begin{align}
\label{eq:normalized-prob}
\tilde{P}(R) = \rho_n^{-e} \ P(R)\qquad \tilde{Q}(R) = \rho_n^{-e} \ Q(R)
\end{align} 
Furthermore, let $Iso(R)$ denote the class of graphs isomorphic to $R$, and $|Iso(R)|$ its cardinality. Our estimator of $\tilde{P}(R)$ is given by:
\begin{align}
\label{eq:normalized-estimator}
\hat{P}(R) = \rho_n^{-e} \frac{1}{{n \choose p} \ |\mathrm{Iso}(R)| } \sum_{S \sim R} \mathbbm{1}(S = G_n[S])
\end{align}

 Similarly, define $\hat{Q}(R)$ as:
\begin{align}
\label{eq:normalized-estimator2}
\hat{Q}(R) = \rho_n^{-e} \frac{1}{{n \choose p} \ |\mathrm{Iso}(R)| } \sum_{S \sim R} \mathbbm{1}(S \subseteq G_n[S])
\end{align}

Due to magnification by $\rho_n^{-e}$, (\ref{eq:normalized-prob}), (\ref{eq:normalized-estimator}), and (\ref{eq:normalized-estimator2}) are not necessarily upper bounded by $1$; nevertheless, they are still meaningful quantities related to subgraph frequencies.


\citet{Bickel-Chen-Levina-method-of-moments} establish a central limit theorem for these functionals under general conditions on the sparsity level and structure of the subgraph. Under analogous conditions, we establish the following consistency result:
\begin{theorem}[Jackknife Consistency for Counts]\label{thm:jkconsistent}
	 Suppose that $R$ is a $p$-cycle or acyclic graph.  Furthermore, suppose that $ \int_0^1 \int_0^1 w^{2e}(u,v) \ du \ dv < \infty$ and $n \rho_n \rightarrow \infty$. Let $\sigma^2 = \lim_{n \rightarrow \infty} n \cdot \mathrm{Var} \ \hat{P}(R)$. Then,
\begin{align}
n \cdot \vj \ \hat{P}(R)  \xrightarrow{P} \sigma^2
\end{align} 
\end{theorem}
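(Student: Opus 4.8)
## Proof Proposal for Theorem~\ref{thm:jkconsistent}

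\textbf{Overall strategy.} The plan is to reduce the convergence in probability of the (rescaled) jackknife estimate to a mean convergence statement plus a variance-vanishing statement, and then to control both via Hoeffding-type decompositions of the leave-one-out counts. Write $U_n = \hat{P}(R)$ as a (rescaled) U-statistic of order $p$ in the independent variables $(\xi_i)$ and $(\eta_{ij})$. The leave-$i$-out estimator $Z_{n,i} = \hat P(R)$ computed on $G_n$ with node $i$ deleted is then the same U-statistic over the remaining $n-1$ nodes. A classical identity for the jackknife applied to a U-statistic (or, more generally, a standard manipulation as in \citet{shao-tu-bootstrap-jackknife}) expresses $\sum_i (Z_{n,i} - \bar Z_n)^2$ in terms of the first-order (Hájek) projection of $U_n$; my first step is to establish the analogue of that identity here, carefully accounting for the $\rho_n^{-e}$ normalization and for the edge-level randomness $\eta_{ij}$ in addition to the vertex-level randomness $\xi_i$. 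The target is to show $n \cdot \vj\, \hat P(R) = n(n-1)\,\widehat{\var}(\text{leading linear term}) + (\text{negligible})$, where the leading linear term is a sum of independent-across-$i$ contributions, each attached to node $i$.

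\textbf{Step 1: Hoeffding/ANOVA decomposition.} Decompose $\hat P(R) - E\hat P(R) = \sum_{k=1}^{p} \binom{p}{k}\, U_n^{(k)}$ into its orthogonal components, where $U_n^{(k)}$ is the degenerate U-statistic of order $k$ built from the order-$k$ kernel component. Under the moment assumption $\int w^{2e} < \infty$ and $n\rho_n \to \infty$, the rescaled linear term $U_n^{(1)}$ has variance of exact order $(n\rho_n)^{-1}\cdot$(graphon-dependent constant) plus, crucially for sparse graphs, a ``diagonal''/edge-variance contribution of the same order; the higher-order terms $U_n^{(k)}$, $k \ge 2$, are of strictly smaller order (here is where the restriction to $R$ a $p$-cycle or acyclic is used — it guarantees, via the structure in \citet{Bickel-Chen-Levina-method-of-moments}, that no higher-order term inflates to match the linear term, so the CLT scaling is genuinely $\sqrt{n}$). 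This gives $n\cdot\var \hat P(R) \to \sigma^2$ with $\sigma^2$ the variance of the linear part, matching the hypothesis.

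\textbf{Step 2: Mean of the jackknife.} Show $n\cdot E(\vj\, \hat P(R)) \to \sigma^2$. This is where I invoke the already-established machinery: the argument is structurally the same as the proof of Proposition~\ref{prop:degree} (edge density is the case $R = $ single edge), now carried through for a general $p$-cycle or tree. Using \eqref{eq:mg-rewrite}, $E\,\vj\,\hat P(R) = \frac{n-1}{2}E(Z_{n,1} - Z_{n,2})^2$, and $Z_{n,1} - Z_{n,2}$ differs from a single linear-projection difference $d$ only by lower-order degenerate terms; expanding the square and tracking which combinatorial terms survive the $\rho_n^{-2e}$ scaling yields the limit $\sigma^2$. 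I expect this to be the most calculation-heavy part but conceptually routine given Proposition~\ref{prop:degree}.

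\textbf{Step 3: Variance of the jackknife vanishes.} Show $\var(n\cdot \vj\, \hat P(R)) \to 0$, which together with Step 2 gives convergence in probability by Chebyshev. Write $n\,\vj\,\hat P(R) = n\sum_i (Z_{n,i} - \bar Z_n)^2$. Using the decomposition of Step 1, replace each $Z_{n,i}$ by its linear projection $\bar Z_n + L_{n,i}$ up to an $o_P$ remainder (controlled in $L^2$ uniformly in $i$ using the moment bound), so that $n\,\vj\,\hat P(R) = n\sum_i L_{n,i}^2 + o_P(1)$; then $n\sum_i L_{n,i}^2$ is, up to scaling, an average of functions of the independent blocks $(\xi_i, \{\eta_{ij}\}_j)$ and a standard fourth-moment computation (needing $\int w^{2e} < \infty$ to bound fourth moments of the rescaled increments — this is exactly why the hypothesis is $2e$ and not just $2$) shows its variance is $O(1/n)$. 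The cross terms between the linear part and the remainder are handled by Cauchy–Schwarz.

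\textbf{Main obstacle.} The hard part will be Step 3, specifically the uniform-in-$i$ $L^2$ (indeed $L^4$) control of the nonlinear remainder $Z_{n,i} - \bar Z_n - L_{n,i}$ after multiplication by the large factor $\rho_n^{-e}$. One must show that the rescaled degenerate components of order $\ge 2$, when squared and summed over $i$ and multiplied by $n$, still vanish — i.e., that the sparsity-induced blow-up $\rho_n^{-e}$ does not overwhelm the U-statistic order gains. This is precisely the point where the $p$-cycle/acyclic restriction and the sharp moment condition $\int w^{2e}<\infty$ must be used in tandem, mirroring the subgraph-structure bookkeeping in \citet{Bickel-Chen-Levina-method-of-moments}; keeping the combinatorics of overlapping copies $S \sim R$ straight (how many copies share $k$ vertices, and the order in $\rho_n$ of their joint contribution) is the crux of the estimate.
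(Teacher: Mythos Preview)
Your overall plan—linearize via a Hoeffding decomposition, then prove mean convergence (Step~2) plus variance vanishing (Step~3) and conclude by Chebyshev—is plausible, but it is not the paper's route, and Step~3 as written has a genuine moment-condition gap.

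The paper never computes $E(n\,\vj)$ or $\var(n\,\vj)$. Instead it conditions on the latent positions $\boldsymbol{\xi}_n=(\xi_1,\ldots,\xi_n)$ and writes $Z_{n,i}=\alpha_i+\beta_i$ with $\beta_i=E(Z_{n,i}\mid\boldsymbol{\xi}_n)$ and $\alpha_i=Z_{n,i}-\beta_i$. The point is that each $\beta_i$ is an ordinary $p$th-order U-statistic in the iid sample $\xi_1,\ldots,\xi_n$ with the edge-structure kernel $h$, so \emph{classical} jackknife consistency for U-statistics (\citet{arvesen1969}; \citet{Lee-Ustats}, Chapter~5) gives $n\sum_i(\beta_i-\bibar)^2\xrightarrow{P}\sigma^2$ directly, under only $E[h^2]<\infty$, which is exactly what $\int w^{2e}<\infty$ provides. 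The edge-randomness piece $n\sum_i(\alpha_i-\aibar)^2$ is nonnegative, so it suffices to show its expectation is $o(1)$ and invoke Markov; that expectation reduces to the conditional-covariance sums $E\bigl[\sum\cov(S,T\mid\boldsymbol{\xi}_n)\bigr]$ already bounded $o(1/n)$ in \citet{Bickel-Chen-Levina-method-of-moments}. The cross term dies by Cauchy--Schwarz. No second-moment control of $n\,\vj$ is ever needed.

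Your Step~3, by contrast, proposes to bound $\var\bigl(n\sum_i L_{n,i}^2\bigr)$ via a ``fourth-moment computation'', asserting that $\int w^{2e}<\infty$ suffices. It does not: $\int w^{2e}<\infty$ gives $E[h^2]<\infty$, not $E[h^4]<\infty$. Already for $R$ a single edge ($e=1$) the hypothesis is merely $\int w^2<\infty$, which does not imply the $\int w^4<\infty$ you would need for $\var\bigl(h_1(\xi_1)^2\bigr)$ to be finite. So the Chebyshev route on $n\,\vj$ demands a strictly stronger moment assumption than the theorem allows. You could patch this by using the weak law for $\tfrac{1}{n}\sum_i h_1(\xi_i)^2$ (which needs only $E[h_1^2]<\infty$), but then Step~2 becomes redundant and you are essentially reconstructing the paper's argument without the clean $\alpha/\beta$ split. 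The idea you are missing is precisely that conditioning on $\boldsymbol{\xi}_n$ lets you invoke Arvesen wholesale for the signal and dispose of the edge-noise with a first-moment (Markov) bound, sidestepping both the Proposition~\ref{prop:degree}-style combinatorics your Step~2 would require for general $R$ and the fourth-moment obstruction in your Step~3.
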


Our proof relies on a signal-noise decomposition of the jackknife variance. \citet{Bickel-Chen-Levina-method-of-moments} establish that the variance of a count functional is largely driven by the variance of a U-statistic related to the edge structure of the subgraph.  For this U-statistic component of the decomposition, results for jackknifing U-statistics due to \citet{arvesen1969} may be used to show convergence in probability towards the variance of the corresponding U-statistic.  Since the jackknife is a sum of squares, we are able to decouple the effects of a remainder term and show that it is negligible.  We provide a sketch below, and defer the details to Supplement Section~\ref{sec:suppthm2}.

\begin{proofsketch}
Define density (normalized counts) of R when leaving $i$th node out is $Z_{n,i}={n-1 \choose p}^{-1}\rho_n^{-e}(T-T_i)$, where $T$ is total counts of $R$ in $G_n$, $p$ and $e$ are number of vertices and edges in $R$. Define $Z_n={n \choose p}^{-1}\rho_n^{-e}T$. Then $\vj =\sum_i(Z_{n,i}-\overline{Z_{n}})^2$, $\mathrm{Var} \ \hat{P}(R)= \var \ Z_n$.

Theorem 1 of \citet{Bickel-Chen-Levina-method-of-moments} establishes that $n\var(Z_n)$ converges to a positive constant. Thus we scale $\vj$ by $n$, and decompose  $n\vj$ into
\begin{align}
\label{eq:decompose}
    n\hspace{-1mm }\left[\sum_i(\alpha_i-\aibar)^2\hspace{-1mm }-\hspace{-1mm}2\sum_i(\alpha_i-\aibar)(\beta_i-\bibar) \hspace{-1mm}+\hspace{-1mm}\sum_i(\beta_i-\bibar)^2\hspace{-1mm }\right]\hspace{-1.5mm},
\end{align}
where $\alpha_i= Z_{n,i}-E(Z_{n,i}|\boldsymbol{\xi}_n), \ \beta_i=E(Z_{n,i}|\boldsymbol{\xi}_n)$, $\aibar=\frac{1}{n}\sum_{i=1}^n \alpha_i$, $\bibar=\frac{1}{n}\sum_{i=1}^n \beta_i$ and $\boldsymbol{\xi}_n = (\xi_1, \ldots \xi_n)$.
The term $n\sum_i(\beta_i-\bibar)^2$ corresponds to the signal component discussed before the theorem statement.  

We show in the Supplement that $E\sum_i(\alpha_i-\aibar)^2$ can be further written into $E \left[\sum_{S,T} cov(S,T|\boldsymbol{\xi}_n) \right]$, $\forall$ $S$, $T$ $\sim R$. By \citet{Bickel-Chen-Levina-method-of-moments}, $E \left[\sum_{S,T} cov(S,T|\boldsymbol{\xi}_n)\right] = o(\frac{1}{n})$. $n\sum_i(\alpha_i-\aibar)^2$ is thus negligible by Markov Inequality. The cross term $n\sum_i(\alpha_i-\aibar)(\beta_i-\bibar)$ is also negligible by the Cauchy-Schwartz Inequality. 



\end{proofsketch}

\begin{remark}
 Our theoretical results hold for both notions of subgraph frequencies. However, note that $\tilde{Q}(R)$ is independent of n, but $\tilde{P}(R)$ depends on $n$ and approaches $\tilde{Q}(R)$.  While $\sqrt{n}[\hat{P}(R)- \tilde{P}(R)]$ and $\sqrt{n}[\hat{Q}(R)- \tilde{Q}(R)]$ have the same limiting variance, inference for a fixed target using $\hat{P}(R)$ requires stronger sparsity conditions; namely $\rho_n = \omega(1/\sqrt{n})$. See Section \ref{centering-remark} for a related discussion.
 \end{remark}
 
 \begin{remark}
 Central limit theorems and jackknife consistency can also be shown for more general (cyclic) graphs.  However, in these cases, more stringent sparsity conditions are needed.  
\end{remark}

Now, let $f(G_n)$ denote a function of the vector $(\hat{P}(R_1), \ldots, \hat{P}(R_d))$. Furthermore, let $\nabla f$ denote the gradient of $f$ and $\mu \in \mathbb{R}^d$ the limit of $(\tilde{P}(R_1), \ldots, \tilde{P}(R_d))$ as $n \rightarrow \infty$; it turns out that $\mu$ corresponds to an integral parameter of the graphon related to the edge structure of the subgraph.   We have the following result.
\begin{theorem}[Jackknife Consistency for Smooth Functions of Counts] \label{thm:smooth}
	Suppose that $(R_1, \ldots, R_d)$ are $p$-cycles or acyclic graphs and $n \rho_n \rightarrow \infty$. Let $e^* = \max \{ |E(R_1), \ldots E(R_d) \}$ and suppose that $ \int_0^1 \int_0^1 w^{2e^*}(u,v) \ du \ dv < \infty$.  Furthermore, suppose that $\nabla f$ exists in a neighborhood of $\mu$, $\nabla f(\mu) \neq 0$, and that $\nabla f$ is continuous at $\mu$. Let $\sigma_f^2$ is the asymptotic variance of $\sqrt{n}[f(G_n) - f(E(G_n))]$. Then,
\begin{align*}
n \cdot  \vj \ f(G_n) \xrightarrow{P} \sigma_f^2
\end{align*}
\end{theorem}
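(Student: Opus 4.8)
The plan is to reduce the smooth-function case to the multivariate version of Theorem~\ref{thm:jkconsistent} via a delta-method argument carried out at the level of the jackknife pseudo-values. First I would record a joint statement: for the vector statistic $\mathbf{Z}_n = (\hat P(R_1), \ldots, \hat P(R_d))$, the leave-node-out versions $\mathbf{Z}_{n,i}$ satisfy, after scaling by $n$, that the matrix $n \sum_{i=1}^n (\mathbf{Z}_{n,i} - \bar{\mathbf{Z}}_n)(\mathbf{Z}_{n,i} - \bar{\mathbf{Z}}_n)^\top$ converges in probability to the asymptotic covariance matrix $\Sigma$ of $\sqrt{n}(\mathbf{Z}_n - \boldsymbol\mu_n)$. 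This is the natural multivariate analogue of Theorem~\ref{thm:jkconsistent}: the diagonal entries are exactly that theorem, and the off-diagonal entries follow from the same signal-noise decomposition applied to the polarization identity $4(\mathbf{Z}_{n,i} - \bar{\mathbf{Z}}_n)_a(\mathbf{Z}_{n,i} - \bar{\mathbf{Z}}_n)_b = [(\mathbf{Z}_{n,i} - \bar{\mathbf{Z}}_n)_a + (\mathbf{Z}_{n,i} - \bar{\mathbf{Z}}_n)_b]^2 - [\cdots]^2$, noting that sums $\hat P(R_a) + \hat P(R_b)$ of count statistics are themselves handled by the same $\alpha_i/\beta_i$ decomposition (the signal parts add, the noise parts remain $o(1/n)$ in conditional covariance). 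That the limiting $\Sigma$ is finite uses the moment condition with $e^* = \max_a |E(R_a)|$, since $\int w^{2e^*} < \infty$ dominates all the relevant cross-moments.

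Next I would bring in a Taylor expansion. Write $\mathbf{c}_n := E(\mathbf{Z}_n)$ (or $\boldsymbol\mu_n$), so $\mathbf{c}_n \to \boldsymbol\mu$. Since $\nabla f$ exists and is continuous in a neighborhood of $\boldsymbol\mu$, for each $i$ there is a mean-value point $\boldsymbol\theta_{n,i}$ on the segment between $\mathbf{Z}_{n,i}$ and $\bar{\mathbf{Z}}_n$ with
\begin{equation}
f(\mathbf{Z}_{n,i}) - f(\bar{\mathbf{Z}}_n) = \nabla f(\boldsymbol\theta_{n,i})^\top (\mathbf{Z}_{n,i} - \bar{\mathbf{Z}}_n).
\end{equation}
Hence
\begin{equation}
n \cdot \vj\, f(G_n) = n\sum_{i=1}^n \big[\nabla f(\boldsymbol\theta_{n,i})^\top (\mathbf{Z}_{n,i} - \bar{\mathbf{Z}}_n)\big]^2.
\end{equation}
If one could replace every $\nabla f(\boldsymbol\theta_{n,i})$ by the constant $\nabla f(\boldsymbol\mu)$, the right side would be exactly $n\, \nabla f(\boldsymbol\mu)^\top \big[\sum_i (\mathbf{Z}_{n,i} - \bar{\mathbf{Z}}_n)(\mathbf{Z}_{n,i} - \bar{\mathbf{Z}}_n)^\top\big] \nabla f(\boldsymbol\mu)$, which by the multivariate jackknife-consistency step converges in probability to $\nabla f(\boldsymbol\mu)^\top \Sigma\, \nabla f(\boldsymbol\mu) = \sigma_f^2$ (the last equality being the delta method identifying the asymptotic variance of $\sqrt{n}(f(G_n) - f(E(G_n)))$, using $\nabla f(\boldsymbol\mu) \neq 0$ so the limit is the genuine non-degenerate variance).

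The main obstacle is justifying that replacement uniformly over $i$. The subtlety is that $\boldsymbol\theta_{n,i}$ lies between $\mathbf{Z}_{n,i}$ and $\bar{\mathbf{Z}}_n$, and while $\bar{\mathbf{Z}}_n \cp \boldsymbol\mu$, the individual leave-one-out values $\mathbf{Z}_{n,i}$ need not all be close to $\boldsymbol\mu$ simultaneously — one needs a $\max_i \|\mathbf{Z}_{n,i} - \bar{\mathbf{Z}}_n\| \cp 0$ type statement, or at least enough control to invoke uniform continuity of $\nabla f$ on a shrinking neighborhood. I would handle this by (i) establishing $\max_{1\le i \le n} \|\mathbf{Z}_{n,i} - \mathbf{Z}_n\| \cp 0$ via a moment/union bound — each $\mathbf{Z}_{n,i} - \mathbf{Z}_n$ is an $O(1/n)$-weighted difference of counts differing in the edges incident to node $i$, so its higher moments are small enough that $n \cdot P(\|\mathbf{Z}_{n,i} - \mathbf{Z}_n\| > \varepsilon) \to 0$ under $n\rho_n \to \infty$ and the moment condition; (ii) on the event $E_n := \{\max_i \|\boldsymbol\theta_{n,i} - \boldsymbol\mu\| \le \delta_n\}$ with $\delta_n \to 0$, which has probability tending to $1$, write $\nabla f(\boldsymbol\theta_{n,i}) = \nabla f(\boldsymbol\mu) + \mathbf{r}_{n,i}$ with $\sup_i \|\mathbf{r}_{n,i}\| \to 0$ by continuity of $\nabla f$ at $\boldsymbol\mu$; (iii) expand the square and bound the cross and remainder contributions by $\big(\sup_i \|\mathbf{r}_{n,i}\|\big)\big(\|\nabla f(\boldsymbol\mu)\| + \sup_i\|\mathbf{r}_{n,i}\|\big) \cdot n\sum_i \|\mathbf{Z}_{n,i} - \bar{\mathbf{Z}}_n\|^2$, where the last factor is $O_P(1)$ by the multivariate step. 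Since $\sup_i \|\mathbf{r}_{n,i}\| \cp 0$, this error is $o_P(1)$, completing the argument. Two points deserve care in the write-up: that $f(E(G_n))$ and $f(\bar{\mathbf{Z}}_n)$ are centered consistently so that the delta-method variance $\sigma_f^2$ is the stated one (this is where $\nabla f(\boldsymbol\mu) \neq 0$ guarantees $\sigma_f^2 > 0$ and the limit is non-degenerate), and that the moment condition is invoked with the correct exponent $e^*$ so that all $d$ coordinates — and their pairwise sums — have finite limiting variance simultaneously.
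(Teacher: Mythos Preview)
Your proposal follows essentially the same route as the paper: Taylor-expand $f(Z_{n,i})$ about $\bar{\mathbf Z}_n$, isolate the linear part $\nabla f(\mu)^\top(\mathbf Z_{n,i}-\bar{\mathbf Z}_n)$, invoke a multivariate version of Theorem~\ref{thm:jkconsistent} for that part, and kill the remainder via uniform control of the mean-value points together with continuity of $\nabla f$ at $\mu$. Two points are worth flagging.

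First, a small slip: $n\cdot\vj\,f(G_n)$ centers at $\overline{f(\mathbf Z_{n,i})}$, not at $f(\bar{\mathbf Z}_n)$, so your displayed identity is not an equality. The paper handles this by writing $f(Z_{n,i})-\overline{f(Z_{n,i})}=I_i+II_i$ with $I_i=\nabla f(\mu)^\top(Z_{n,i}-\bar Z_n)$ and $II_i=E_i-\overline{E}$, where $E_i=(\nabla f(\zeta_i)-\nabla f(\mu))^\top(Z_{n,i}-\bar Z_n)$; both pieces then have mean zero over $i$. Your version can be repaired the same way (or by checking that $n^2(\overline{f(\mathbf Z_{n,i})}-f(\bar{\mathbf Z}_n))^2=o_P(1)$), but as written it is not correct.

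Second, your plan for $\max_i\|\mathbf Z_{n,i}-\bar{\mathbf Z}_n\|\cp 0$ via higher moments and a union bound works but is more laborious than needed. The paper uses a one-line device: since $\sum_i(Z_{n,i}(j)-\bar Z_n(j))^2=O_P(1/n)$ by Theorem~\ref{thm:jkconsistent} itself, the trivial bound $\max_i|a_i|\le\sqrt{\sum_i a_i^2}$ gives $\max_{i,j}|Z_{n,i}(j)-\bar Z_n(j)|=O_P(n^{-1/2})$ for free, with no new moment computation. For the multivariate jackknife matrix, the paper likewise extends the $\alpha_i/\beta_i$ decomposition coordinatewise and cites Arvesen's multivariate U-statistic jackknife result for the signal part, rather than going through polarization; either path is fine.
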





\begin{proofsketch}
Let  $Z_{n,i} = (Z_{n,i}(1), \ldots Z_{n,i}(d))$, where $d$ is a constant w.r.t $n$ and each entry corresponds to a count functional with node $i$ removed.  Let $\bar{Z}_{n} = \frac{1}{n}\sum_{i=1}^n Z_{n,i}$.
We use a Taylor expansion of $f(Z_{n,i})$ around $\bar{Z}_{n}$.
\begin{align*}
    f(Z_{n,i})
    &=f\bigl(\bar Z_n) +\nabla f(\mu)^T (Z_{n,i}-\bar{Z}_n)+\underbrace{(\nabla f(\zeta_i)-\nabla f(\mu))^T (Z_{n,i}-\bar{Z}_n)}_{E_i},
\end{align*}
where $\zeta_i = (\zeta_{i1}, \ldots, \zeta_{id})=c_i Z_{n,i}+(1-c_i)\bar Z_n $ for some $c\in [0,1]$. 
Thus, we also have:
\begin{align}\label{eq:smoothdecomp}
\begin{split}
     f(Z_{n,i})-\overline{f(Z_{n,i})}&= \underbrace{\vphantom{E_i-\frac{1}{n}\sum_i E_i} \nabla f(\mu)^T (Z_{n,i}-\bar Z_n)}_{I_i}+\underbrace{ E_i-\frac{1}{n}\sum_i E_i}_{II_i}
\end{split}
\end{align}
We bound $n\sum_i (I_i)^2$ and $n\sum_i (II_i) ^2$ separately. Let $\Sigma$ denotes the covariance matrix of a multivariate U-statistic with kernels $(h_1, \ldots, h_d)$, where each $h_j$ is the kernel corresponding to the count functional in the $j^{th}$ coordinate of the vector $Z_{n}$ (see Eq \ref{eq:edge-structure-kernel} for detailed definition). We can show that 
\begin{align*}
    \left|n \sum_i (I_i)^2 - \df^T \Sigma \df\right|=o_p(1).
\end{align*}
We can also show that $n\sum_i (II_i) ^2$ is also $o_p(1)$.  
Then, let $\mu_n=E[Z_n]$. Note that if one counts subgraphs by an exact match as in~\cite{Bickel-Chen-Levina-method-of-moments} $\mu_n\rightarrow \mu$. If one counts subgraphs via edge matching, $\mu_n=\mu$. Thus, both these types of subgraph densities, which asymptotically have the same limit, can be handled by our theoretical results. 
By Theorem 3.8 in \citet{van2000asymptotic},
\begin{equation*}
    \sqrt{n}(f(Z_n)-f(\mu_n)) \rightsquigarrow N(0,\nabla f(\mu)^T \Sigma \nabla f(\mu) )
\end{equation*}
This shows that the jackknife estimate of variance converges to the asymptotic variance of $f(Z_n)$.
The proof is deferred to the Supplement Section~\ref{sec:suppthm3}.
\end{proofsketch}
\bk

\subsection{A Remark on the use of the Network Jackknife for Two-Sample Testing}
\label{centering-remark}
In principle, the jackknife variance provides a quantification of uncertainty that may be used for many inference tasks.  When the limiting distribution is Normal, one may use a Normal approximation; otherwise, one may use Chebychev's inequality. However, in these cases, the centering is $\theta_{n-1} = E(Z_{n-1})$, which depends on $n$.  Inferences about $\theta_{n-1}$ are often useful for a single graph, but for two-sample testing, issues may arise when comparing networks of different sizes. Probability statements involving a fixed population parameter $\theta$ are needed. To ensure that the jackknife yields valid inferences for an appropriate population parameter, we will need to impose some additional assumptions.  In what follows, let $\{\tau_n\}_{n \in \mathbb{N}}$ denote a sequence of normalizing constants and let $U_{n-1} = \hat{\theta}_{n-1} - \theta$ for some $\theta \in \mathbb{R}$.  We have the following result:  
\bk
\begin{figure*}[h]
\vskip 0.2in
\begin{center}
\vspace{-0.5cm}
\includegraphics[width=  \textwidth,height=0.28\textwidth]{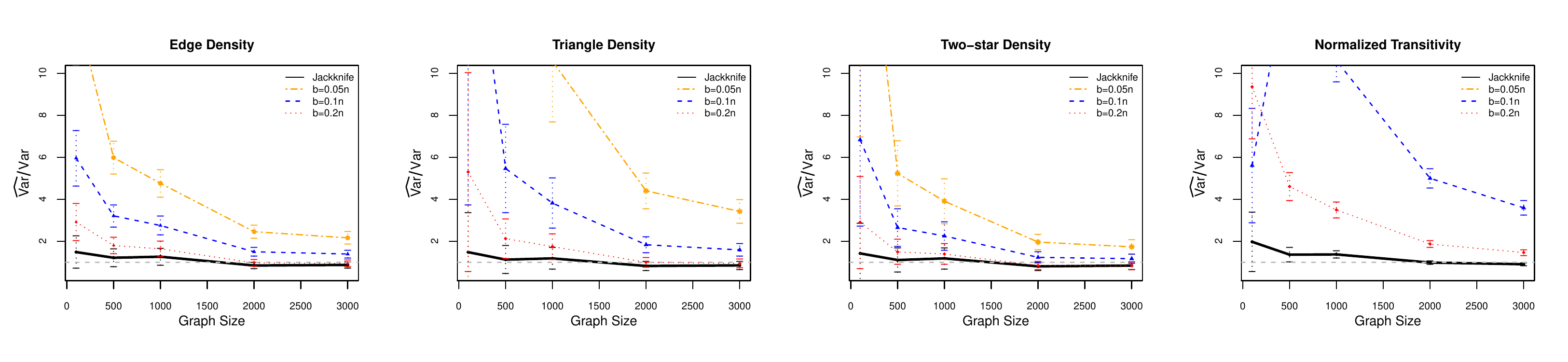}
\begin{footnotesize}
\begin{tabular} {p{0.8cm}p{2.6cm}p{2.6cm}p{2.6cm}p{2.6cm}}
     &(a) &(b) &(c) &(d) \\
\end{tabular}
\end{footnotesize}
\includegraphics[width=  \textwidth,height=0.28\textwidth]{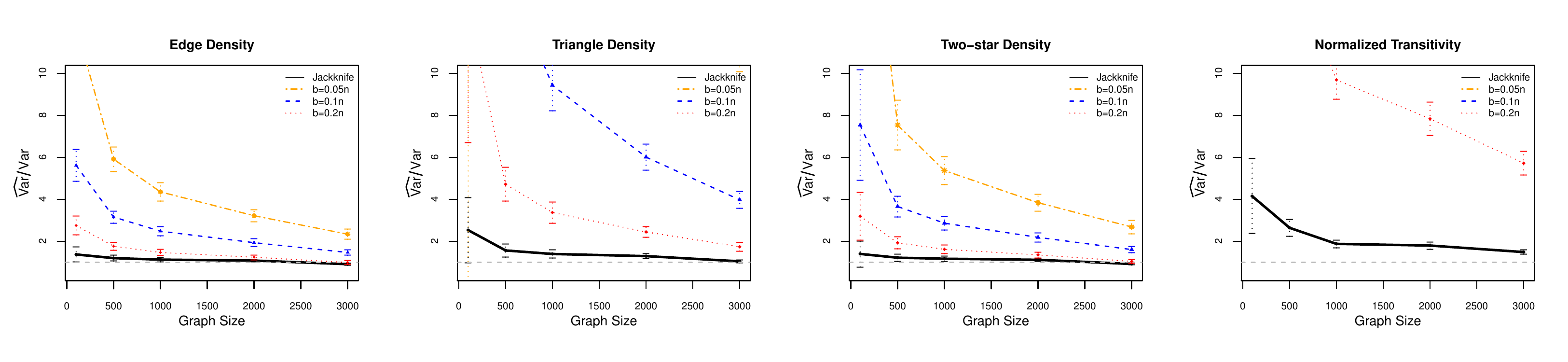}
\begin{footnotesize}
\begin{tabular} {p{0.8cm}p{2.6cm}p{2.6cm}p{2.6cm}p{2.6cm}}
     &(e) &(f) &(g) &(h) \\
\end{tabular}
\end{footnotesize}
\vspace{-0.5cm}
\caption{Ratio of Jackknife estimate $\vj$ to true variance $\var$ for edge density, triangle density, two-star density and transitivity in size $n=100,500,1000,2000,3000$ graphs simulated from the SBM (top) and~\ref{eq:g2} (bottom), compared to subsampling with $b=0.05n$, $b=0.1n$, $b=0.2n$ variance estimation on the same graphs.}
\label{fig:ratio}
\end{center}
\vskip -0.2in
\end{figure*}

\begin{proposition}
\label{prop:centering-result}
Suppose that $\tau_n \rightarrow \infty$ and $\tau_n U_n \rightsquigarrow U$ for some non-degenerate $U$ with mean $0$ and variance $\sigma^2$ and $\{(\tau_n U_n)^2\}_{n \in \mathbb{N}}$ is uniformly integrable. Then,
\begin{align}
\tau_n (\hat{\theta}_n - E(\hat{\theta}_n)) \rightsquigarrow U, \  \frac{\mathrm{Var} \  U_n}{\mathrm{Var} \ \hat{\theta}_n} \rightarrow 1
\end{align}
\end{proposition}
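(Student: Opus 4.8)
The plan is to reduce both conclusions to the single classical fact that distributional convergence together with uniform integrability upgrades to convergence of the corresponding moment. First I would rewrite the recentered statistic purely in terms of $U_n := \hat\theta_n - \theta$: since $\theta$ is deterministic,
\[
\tau_n\bigl(\hat\theta_n - E\hat\theta_n\bigr) \;=\; \tau_n(\hat\theta_n - \theta) - \tau_n\bigl(E\hat\theta_n - \theta\bigr) \;=\; \tau_n U_n - E(\tau_n U_n).
\]
So the first conclusion is equivalent to showing that the deterministic sequence $E(\tau_n U_n)$ converges to $0$, after which Slutsky's theorem (applied to $\tau_n U_n \rightsquigarrow U$ perturbed by a vanishing constant) gives $\tau_n(\hat\theta_n - E\hat\theta_n) \rightsquigarrow U$. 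For the variance ratio, I would note that $\mathrm{Var}\,U_n = \mathrm{Var}(\hat\theta_n - \theta) = \mathrm{Var}\,\hat\theta_n$ exactly, since the two differ by the constant $\theta$, and both are finite because a uniformly integrable family is bounded in $L^1$ (so $\sup_n E(\tau_n U_n)^2 < \infty$, whence $E|U_n| < \infty$ by Cauchy--Schwarz); hence $\mathrm{Var}\,U_n / \mathrm{Var}\,\hat\theta_n = 1$ for every $n$. It is also worth recording that the common variance satisfies $\tau_n^2\,\mathrm{Var}\,\hat\theta_n \to \sigma^2$, which identifies the correct limiting target and follows from the same moment-convergence step below.

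The only real work is establishing $E(\tau_n U_n) \to 0$ (and, for the side remark, $E(\tau_n U_n)^2 \to \sigma^2$). For the first: uniform integrability of $\{(\tau_n U_n)^2\}$ implies uniform integrability of $\{\tau_n U_n\}$, because $|x|\,\mathbbm{1}(|x| > M) \le x^2/M$ pointwise, so
\[
\sup_n E\bigl[\,|\tau_n U_n|\,\mathbbm{1}(|\tau_n U_n| > M)\,\bigr] \;\le\; M^{-1}\,\sup_n E(\tau_n U_n)^2 \;\longrightarrow\; 0 \quad (M \to \infty).
\]
Combined with $\tau_n U_n \rightsquigarrow U$, the standard result that $Y_n \rightsquigarrow Y$ with $\{Y_n\}$ uniformly integrable implies $EY_n \to EY$ (see, e.g., \citep{van2000asymptotic}) yields $E(\tau_n U_n) \to E(U) = 0$. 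For the second, the continuous mapping theorem gives $(\tau_n U_n)^2 \rightsquigarrow U^2$, and uniform integrability of $\{(\tau_n U_n)^2\}$ then gives $E(\tau_n U_n)^2 \to E(U^2) = \sigma^2$; together with $(E\tau_n U_n)^2 \to 0$ this gives $\tau_n^2\,\mathrm{Var}\,\hat\theta_n = \mathrm{Var}(\tau_n U_n) \to \sigma^2$, confirming $\sigma^2$ as the limit of the (rescaled) jackknife target.

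I do not anticipate a substantive obstacle: the proposition is essentially bookkeeping that isolates exactly which hypotheses make the $n$-dependent centering $E(\hat\theta_n)$ interchangeable with a fixed population parameter $\theta$ for inference. The one point requiring care is the passage from distributional convergence to convergence of first and second moments, which is precisely where the two uniform integrability assumptions enter; in particular one must observe that $L^2$-uniform integrability of $\tau_n U_n$ is already enough to control the bias term $E(\tau_n U_n)$ and hence cannot be weakened away.
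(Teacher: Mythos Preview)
Your proof is correct and follows essentially the same approach as the paper: both decompose $\tau_n(\hat\theta_n - E\hat\theta_n) = \tau_n U_n - E(\tau_n U_n)$, deduce uniform integrability of $\{\tau_n U_n\}$ from that of $\{(\tau_n U_n)^2\}$, invoke the ``weak convergence $+$ UI $\Rightarrow$ moment convergence'' fact to get $E(\tau_n U_n)\to 0$, and finish the first claim with Slutsky. For the variance ratio, your observation that $\mathrm{Var}\,U_n = \mathrm{Var}\,\hat\theta_n$ identically (since $\theta$ is a deterministic constant) is sharper than the paper's route, which instead shows that both $\tau_n^2\,\mathrm{Var}\,U_n$ and $\tau_n^2\,\mathrm{Var}\,\hat\theta_n$ converge to $\sigma^2$ via the continuous mapping theorem and UI of the squares; your side remark recovers that convergence as well, so nothing is lost.
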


As a consequence of Proposition \ref{prop:centering-result}, if a central limit theorem is known for $\sqrt{n} \ U_n$ and a uniform integrability condition is satisfied, then one may use the jackknife variance in conjunction with a Normal approximation to conduct (possibly conservative) inference for $\theta$.  For count functionals, we have mentioned when this condition holds, so in this case it does not need to be checked. 
\section{Experiments}
\label{sec:exp}
In this section, we present simulation experiments and experiments on real data. For simulations, we compare our variance estimate with that estimated using subsampling. We present our results on two graphons. For real data, we compare networks based on C.I.'s constructed using jackknife estimates of variance of network statistics like edge or triangle density and normalized transitivity.
\paragraph{Count functionals used}
In this section we consider the edge, triangle, two star density, and normalized transitivity, which we explicitly define below.
\begin{align*}
 \text{Edge density} &:=\frac{\sum_{i<j}A_{ij}}{{n \choose 2}\rho_n} 
 \end{align*}
 \begin{align*}
    \text{Triangle density} &:=\frac{\sum_{i<j<k}A_{ij}A_{jk}A_{ki}}{{n \choose 3}\rho_n^{3}} 
     \end{align*}
    \begin{align*}
    \text{Two star density} &:=\frac{\sum_{i,j<k, j,k\neq i}A_{ij}A_{ik}}{{n \choose 3}\rho_n^{2}} 
\end{align*}
As a smooth function of count statistics, we use:
\begin{align*}
    \text{Normalized transitivity}&:=\frac{\text{Triangle density}}{\text{Two star density}}
\end{align*}
\subsection{Simulated Data}
We simulate graphs from two different graphons. The first is a Stochastic Block Model (SBM)~\cite{holland-sbm}, which is a widely used model for networks with communities. A SBM is characterized by a binary cluster membership matrix $Z\in \{0,1\}^{n\times r}$, where $r$ is the number of communities, and a community-community interaction matrix $B$. Conditioned on $Z_{ia}=1$ and $Z_{jb}=1$, nodes $i$ and $j$ form a link with probability $B_{ab}$.
We use  $B=((0.4,0.1,0.1),(0.1,0.5,0.1),(0.1,0.1,0.7))$ and  generate a $Z$ from a $\mbox{Multinomial}(0.3,0.3,0.4)$.  

For the other graphon, we consider the following parameterization:
\begin{align}\label{eq:g2}\tag{GR2}
h_n(u,v) = P(A_{ij} = 1 \ | \ \xi_i = u, \xi_j = v ) = \nu_n |u-v|
\end{align}
where $\nu_n$ is a sparsity parameter. We use $\nu_n=n^{-1/3}$. We denote this graphon by GR2.

From these two graphons, we consider graph size $n$ of  $n=100,500,1000,2000, 3000$.  For each $n$, we simulated 100 graphs to calculate the approximate true variance of edge density, triangle density, two-star density and normalized transitivity among these graphs. 
\paragraph{Computation: }
For each simulated network,  we remove one node at a time, recalculate a statistic  $Z_{n,i}$ on the graph with $(n-1)$ nodes left. Next we compute the jackknife estimate of the variance $\vj:=\sum_i(Z_{n,i}-\bar{Z}_n)^2$, where $\bar{Z}_n$ is the average of the $Z_{n,i}$'s. It should be noted that for some statistics, jackknife, owing to its leave-one-node-out characteristic, can be implemented to reduce computation. For example, in calculating triangles, we calculate the number of triangle on the whole graph once and the number of triangles each node is involved in.  This can be done by keeping track of the number of common neighbors between a node and its neighbors.

For each statistic mentioned, we report the mean of the ratio  $\vj/\var \ Z_n$  among 100 graphs of each $n$ and a 95\% confidence interval from the standard deviation from a normal approximation of these 100 ratios. We also plot a dotted line to denote 1. Closer to this line a resampling procedure is, the better. In Figure~\ref{fig:ratio}, we plot this on the $Y$ axis with $n$ on the $X$ axis. We also plot the same for subsampling with $b=0.05n$, $b=0.1n$, $b=0.2n$ performed on the same graphs.  Figure~\ref{fig:ratio} a,b,c, and d contain results for the SBM, whereas the rest are for the smooth graphon.

We see that for both graphons, $\vj/\var$ converges to $1$ much more quickly in comparison to subsampling and has much smaller variance. These figures also show how susceptible the performance of subsampling is to the choice of $b$. For $b=0.05n$, subsampling overestimates the variance, and exceeds the upper bound on Y axis of some of the figures. In Figure~\ref{fig:ratio} (h) we see that $\vj$ for the normalized transitivity converges slowly for~\ref{eq:g2}, and subsampling with all choices of $b$ are worse as well.


\paragraph{Eigenvalues:}
Here we examine the performance of jackknife on assessing the variance of eigenvalues, to which we have not yet extended our theoretical guarantees.  In Figure~\ref{fig:eig} we show the $\vj/\Var$ for the two principal eigenvalues of the SBM ((a) and (b)) and two graphons described before. Here we only compared with subsampling with $b=0.3n$ and $n=1000,2000,3000$ as subsampling for eigenvalues in sparse graphs only works asymptotically for very large $n$ ~\cite{subsampling-sparse-graphons}. For smaller $n$ and $b$  in our sparsity setting, we saw that subsamples of adjacency matrices often were too sparse leading to incorrect estimate of the variance.
\begin{figure*}[h]
\vskip 0.2in
\begin{center}
\includegraphics[width=  \textwidth,width=  \textwidth,height=0.28\textwidth]{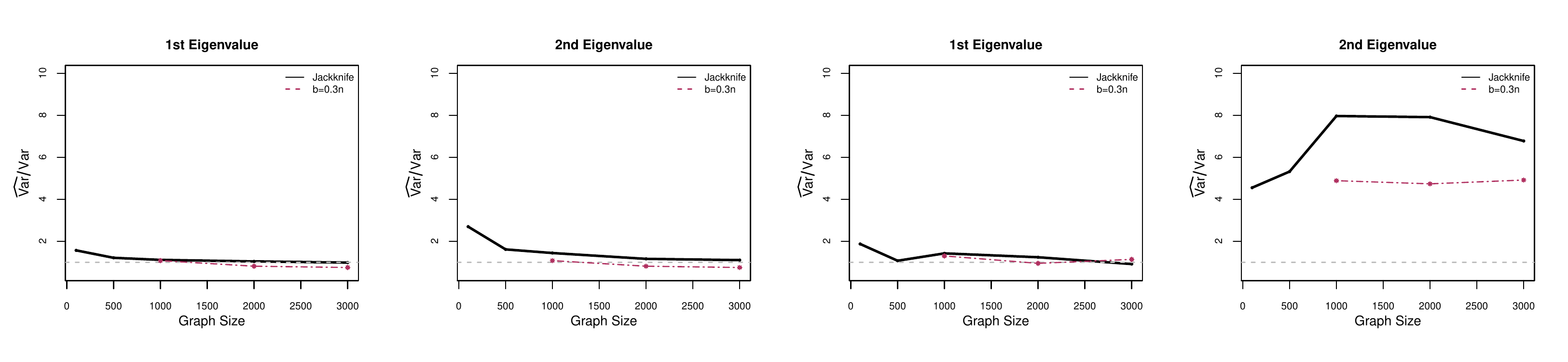}
\begin{footnotesize}
\begin{tabular} {p{0.8cm}p{2.6cm}p{2.6cm}p{2.6cm}p{2.6cm}}
     &(a) &(b) &(c) &(d) \\
\end{tabular}
\end{footnotesize}
\vspace{-0.5cm}
\caption{Ratio of Jackknife estimate $\vj$ to true variance $Var$ for first and second eigenvalues in size $n=100,500,1000,2000,3000$ graphs simulated from stochastic block model in (a) and (b) and the graphon~\ref{eq:g2} in (c) and (d), compared to subsampling with $b=0.3n$ variance estimation on the same graphs.}
\label{fig:eig}
\end{center}
\vskip -0.2in
\end{figure*}
Let us first look at Figure~\ref{fig:eig} (a) and (b) for the SBM setting. For both the eigenvalues in this case, $\vj/\Var$ converges to 1, whereas subsampling consistently underestimates the true variance.  For graphon~\ref{eq:g2}, we see from Figure~\ref{fig:eig} (c) that both jackknife and subsampling estimate the true variance well, whereas for the second eigenvalue (see (d)) they both perform extremely poorly. 
These preliminary results of jackknife estimates show tentative evidence that our theory can be applied to statistics beyond count statistics, like eigenvalues, which we aim to investigate in future work.
\subsection{Real-world Data}
We present two experiments using Facebook network data \cite{fb-data}. In the first experiment, we compared three colleges: Caltech, Williams and Wellesley. While Caltech is known for its strength in natural sciences and engineering, Williams and Wellesley are strong liberal arts colleges. They all have relatively small number of students (800-3000), but with different demographics. For example, Wellesley is a women's liberal arts college, whereas the other have a mixed population. We present the 95\% confidence intervals (CI) obtained using a normal approximation with the estimated variances for two-star and triangle densities for these networks. 

We see that while all three have similar two-star density, Wellesley has significantly higher triangle density. We also see that CI's from jackknife and subsampling with $b=0.1n$ and $b=0.2n$ are comparable. Subsampling with $b=0.05n$ tends have a wider CI, as it overestimates the variance. It is interesting to note that, for triangles, subsampling with $b=0.2n$ took nearly 10 times as much time as jackknife, since we used the leave-one-node-out structure. In comparison, for both methods, two-star counting is overall much faster than counting triangles.



\begin{figure}
\begin{tabular}{cc}
\includegraphics[width=  0.5\textwidth]{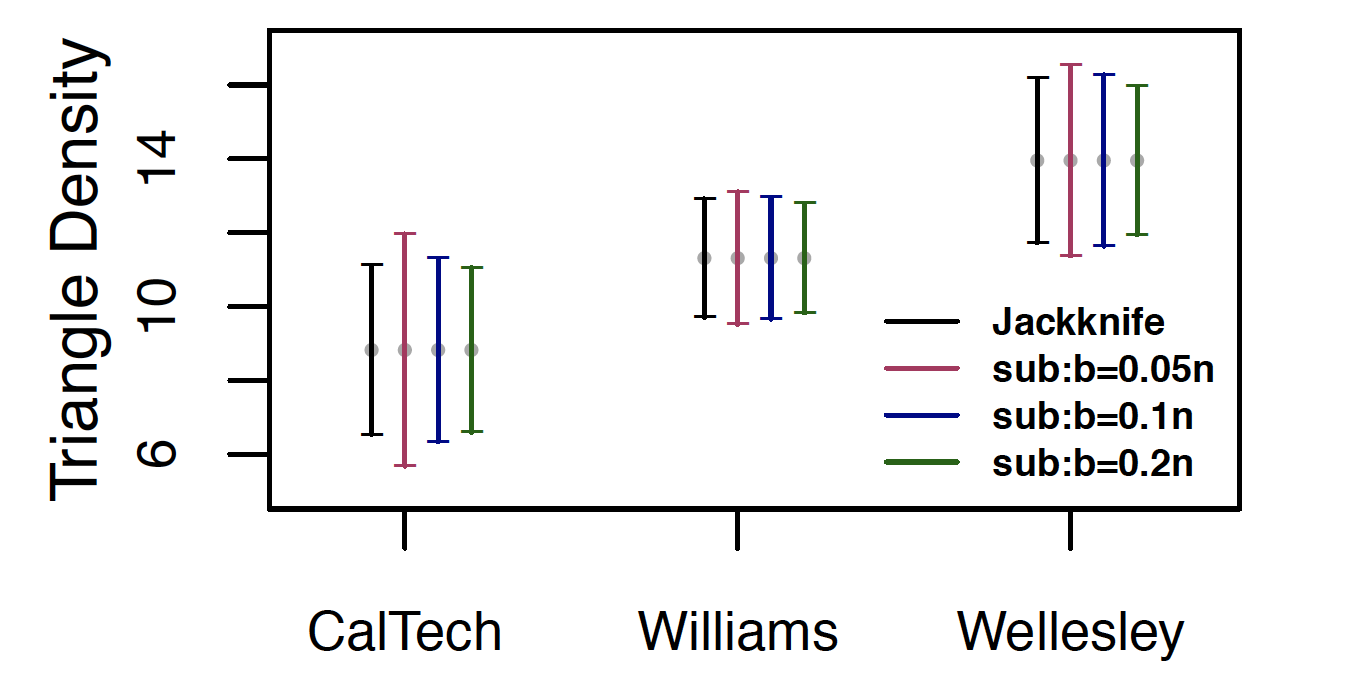}&
\includegraphics[width=  0.5\textwidth]{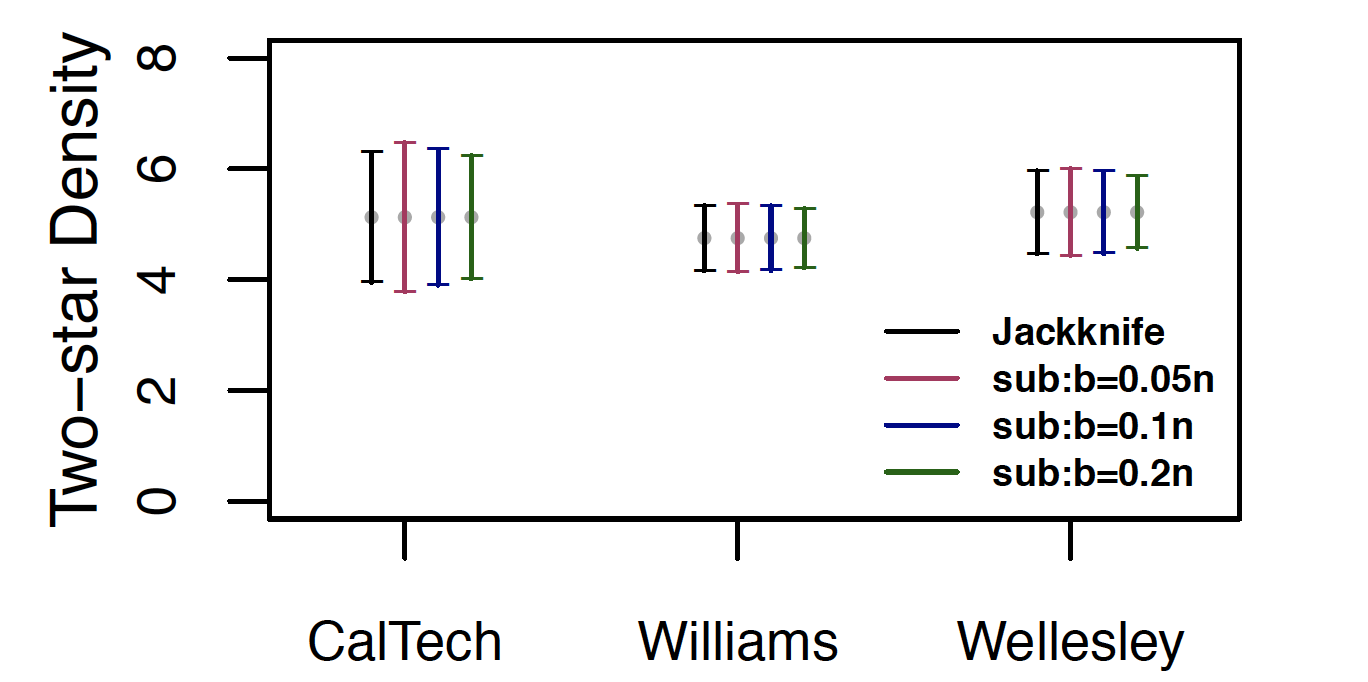}\\
(A)&(B)
\vspace{-2mm}
\end{tabular}
\caption{\label{fig:3colleges} (A) Triangle density , and (B) two-star density (bottom) and their CI's based on jackknife and subsampling variance estimates.}
\vspace{-0.2cm}
\end{figure}

In the second experiment, we look at three college pairs:
Berkeley and Stanford, Yale and Princeton, Harvard and MIT.
First we decide which statistic differentiates between a given pair. For this, we  split each college data set in half, into a training set and test set.  On each of training set, we estimated their triangle density, two-star density, normalized transitivity and their variances estimated by jackknife, demonstrated in Table \ref{table:train}. 
Interestingly, in Table \ref{table:train}, the triangle density is large for all colleges, owing to the sparsity of the networks.  From Table \ref{table:train} we can see normalized transitivity estimates have relatively smaller variance and well separates each of the pairs in training sets. Thus we choose normalized transitivity as the test statistic. We now obtain jackknife estimate of variance of normalized transitivity using the the test sets. 
\begin{table}[t]
	\caption{Triangle, two-star density and normalized transitivity and their variances estimated in college training sets}
	\label{table:train}
	\vskip 0.15in
	\begin{center}
			\begin{tabular}{lccccccr}
				\toprule
				College  &  \multicolumn{2}{c}{Triangle}   &  \multicolumn{2}{c}{Two-star}  & \multicolumn{2}{c}{Norm. Trans.} \\
				& Est & $\widehat{Var}$ & Est &$\widehat{Var}$ & Est & $\widehat{Var}$ \\
				\midrule
				Berkeley    & 77.95 & 18.10 & 6.31 & 0.27 &37.05 & 5.57 \\
				Stanford  & 36.62 & 5.12 & 5.90 &0.11 &18.61 &0.16 \\
				Yale   & 24.20 &2.40 &5.22 &0.09 &13.90 &0.06 \\
				Princeton   & 20.87  &2.34  &5.25 &0.11 &11.91 &0.06 \\
				Harvard    & 38.56 & 5.11 &6.28 &0.10 &18.43 &0.10 \\
				MIT      &  30.20 &7.89 & 6.11 &0.24 &14.82 &0.15 \\
				\bottomrule
			\end{tabular}
\end{center}
\vskip -0.1in
\end{table}

\begin{figure}
	\begin{center}
		\includegraphics[width=  0.5\textwidth]{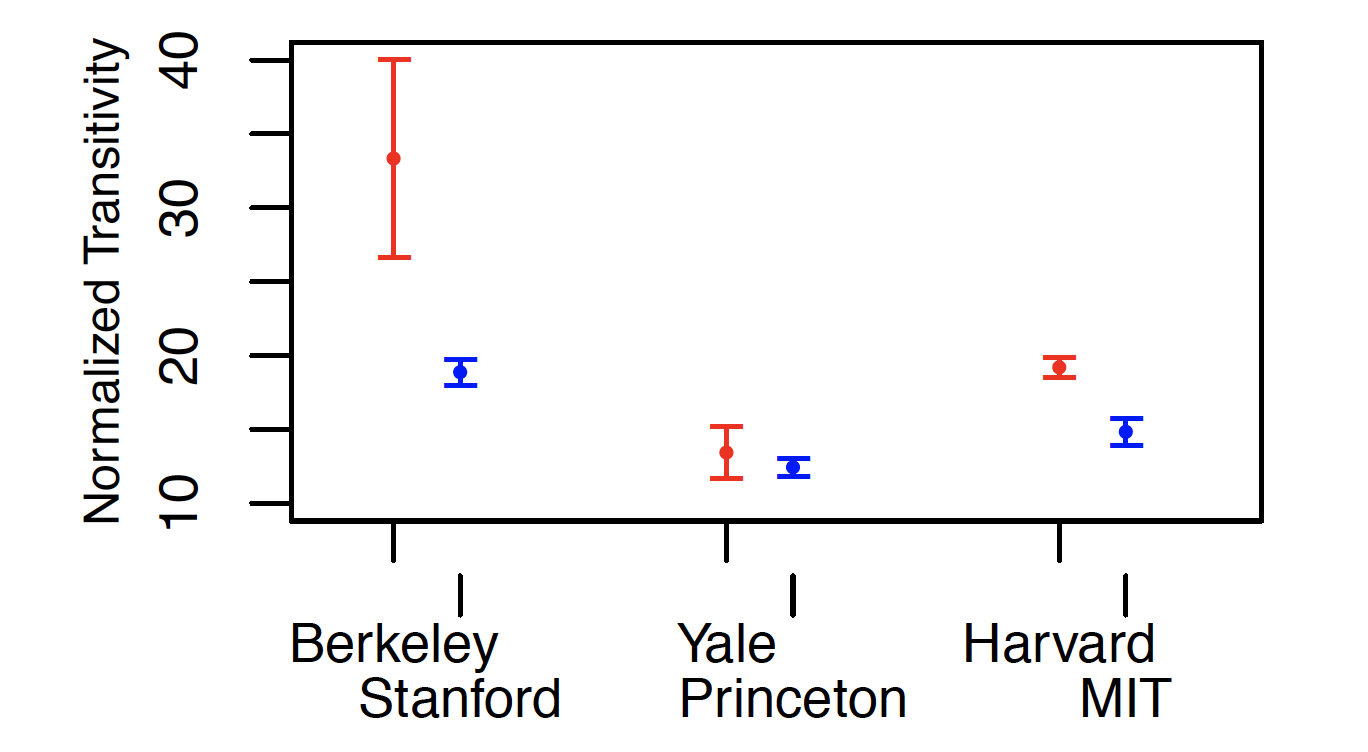}
		\caption{For 3 pairs of colleges, 97.5\% CI constructed using $\vj$ on normalized transitivity }
		\label{fig:6colleges}
	\end{center}
\end{figure}
Figure~\ref{fig:6colleges} presents 97.5\% CI's for normalized transitivity for each college. Thus, two disjoint CI's are equivalent to rejecting a level $0.05$ test. Figure~\ref{fig:6colleges} basically shows that transitivity can in fact separate Berkeley and Stanford Facebook networks, as well as Harvard and MIT Facebook networks, giving us interesting information about the inherent differences between the network structures of these colleges.


\section{Discussion}
In the present work, we have shown that the network jackknife is a versatile tool that may be used in a wide variety of situations.  For poorly understood functionals, the Network Efron-Stein inequality ensures that the jackknife produces conservative estimates of the variance in expectation. For a general class of functionals related to counts, we establish consistency of the jackknife.  Our empirical investigation is encouraging regarding the finite sample properties of the procedure, as the network jackknife outperforms subsampling in many simulation settings.  For certain graphons, our simulations suggest that the jackknife's performance is similar to subsampling,  which will needed be investigated further both theoretically and empirically.     

\begin{appendices}
\renewcommand{\thesection}{A.\arabic{section}}
\renewcommand{\thesubsection}{A.\arabic{subsection}}
\renewcommand{\theequation}{A.\arabic{equation}}
\renewcommand{\thefigure}{A.\arabic{figure}}
\renewcommand{\thetable}{A.\arabic{table}}
\newcommand\Tstrut{\rule{0pt}{2.6ex}}         
\newcommand\Bstrut{\rule[-0.9ex]{0pt}{3ex}}   

\newcommand{\dni}{D^{(n)}_i}
\newcommand{\dnj}{D^{(n)}_j}
\newcommand{\dnip}{D^{(n-1)}_i}
\newcommand{\dnjp}{D^{(n-1)}_j}

\setcounter{equation}{0}
\appendix
\section{Proof of Theorem~\ref{thm:netefron}}
\label{sec:suppES}
To facilitate the proof below, we will explicitly define the data generating mechanism for the Bernoulli trials defined in Eq~\ref{eq:sparse-graphon}.  For $1 \leq i < j \leq n$, define the random variable $\eta_{ij} \sim \mathrm{Unif}[0,1]$ and let $A_{ij} = \mathbbm{1}(\eta_{ij}\leq \rho_n w(\xi_i, \xi_j) \wedge 1)$.  We may view a function $f$ that takes as input a $n-1 \times n-1$ adjacency matrix as a function $g$ of the underlying latent positions.
We require that $g$ is invariant to node-permutation, meaning that $g$ remains unchanged when some (bijective) permutation function $\varphi:\{1,2,\ldots, n-1\} \mapsto \{1,2,\ldots,n-1\}$ is applied to the indices corresponding to $\xi_i$ and both the row and column indices of $\eta_{ij}$ separately. 

In what follows, let $\boldsymbol{\xi}_n = (\xi_i)_{1 \leq i \leq n}$ and $\boldsymbol{\eta}_n = (\eta_{ij})_{1\leq i < j \leq n}$.  Furthermore, we will let $\boldsymbol{\xi}_{n,i}$ denote the vector formed by removing node $i$ and $\boldsymbol{\eta}_{n,i}$ denote the (concatenated) vector formed by removing all elements containing row or column index $i$.     
 
 \begin{proof}
Let $Z_{n,i} = g(\boldsymbol{\xi}_{n,i}, \boldsymbol{\eta}_{n,i})$ denote the functional calculated on an induced subgraph of $n-1$ nodes excluding node $i$.  As before, let $Z_{n-1} = Z_{n,n}$.  
Construct the following martingale difference sequence:
\begin{equation}
    d_i=  E(Z_{n-1}|\Sigma_i)-E(Z_{n-1}|\Sigma_{i-1})
\end{equation}
Here, we consider a filtration introduced by \citet{borgs-convergence-dense-graphons-1}, which was originally used to establish exponential concentration for certain subgraph frequencies in the dense regime.

Let $\Sigma_0 =\{\emptyset, \Omega\}$, $\Sigma_1 = \sigma(\xi_1)$, $\Sigma_2 = \sigma(\xi_1,\xi_2, \eta_{12})$, $\Sigma_3 = \sigma(\xi_1,\xi_2, \xi_3, \eta_{12}, \eta_{13},\eta_{23})$ and so forth up to $n$.  The filtration we consider has the following interpretation: for each time $1 \leq t \leq n$, suppose that we observe a $t \times t$ adjacency matrix induced by the nodes $\{1,2,\ldots,t\}$. Then, $\Sigma_t$ captures all of the randomness in the corresponding induced subgraph.  We may visualize $\Sigma_i$ as a $\sigma$-field generated by a triangular array so that:
\[
\begin{footnotesize}
\Sigma_{i} =\sigma \left\{\begin{array}{ccccc}
  \xi_1 & \eta_{12}&... &\eta_{1,i-1} &\eta_{1i}\\
      &\xi_2 &...&\eta_{2,i-1} & \eta_{2i}\\
      & &...&..\\
      & &\xi_{i-2} &\eta_{i-2,i-1},& \eta_{i-2,i}\\
      &&&\xi_{i-1} & \eta_{i-1,i}\\
      &&&&\xi_{i}
\end{array}\right\}; \ \ 
\Sigma_{i-1} =\sigma \left\{\begin{array}{cccc}
  \xi_1 & \eta_{12}&... &\eta_{1,i-1}\\
      &\xi_2 &...&\eta_{2,i-1}\\
      & &...&..\\
      & &\xi_{i-2} &\eta_{i-2,i-1}\\
      &&&\xi_{i-1} 
\end{array}\right\}
\end{footnotesize}
\]
%

%
\noindent Observe that $Z_{n-1}-E(Z_{n-1})=\sum_{i=1}^{n}d_i$, $d_i$ is $\Sigma_i$ measurable, and  $E(d_i|\Sigma_{i-1})=0$.
 Therefore, the variance of $Z_n$ can be written as:
\begin{align*}
    \var \ Z_{n-1} = E\left(\sum_{i=1}^n d_i\right)^2=\sum_{i=1}^n E(d_i^2) + 2\sum_{i<j} E(d_id_j) 
\end{align*}
Now, for $i \neq j$, observe that:
\begin{align*}
    E(d_id_j)&=E(E(d_id_j|\Sigma_i))=E(d_i)E(d_j|\Sigma_i)\\
    &=E(d_i)(E[E(S_n|\Sigma_j)|\Sigma_i]-E[E(S_n|\Sigma_{j-1})|\Sigma_i])=0
\end{align*}

For the jackknife estimate, we have that: 
\begin{align*}
    E\left(\sum_{i=1}^n(Z_{n,i}-\bar Z_n)^2\right) = \sum_{i<j}\frac{E(Z_{n,i}-Z_{n,j})^2}{n}=\frac{(n-1)\cdot E(Z_{n,1}-Z_{n,2})^2}{2}
\end{align*}

We also denote by $\Sigma_{i:j}$, the sigma field containing all information of random variables $\xi_i,\ldots, \xi_j$, and $\eta_{k\ell}, i\leq k< \ell\leq j$. Now define $\mathcal{A}$ as  $\Sigma_{3:i+1}$.  
Since $Z_{n-1}$ is invariant to node-permutation, $\mathcal{A}$ is independent of $\sigma(\xi_2,\eta_{23}, \ldots,\eta_{2n})$ and $\sigma(\xi_1,\eta_{13},\ldots,\eta_{1n})$,
\begin{align*}
    E(Z_{n,1}|\mathcal{A})=E(Z_{n,2}|\mathcal{A})
\end{align*}
Define:
\begin{align}
    &U=E(Z_{n,1}|\Sigma_{i+1})-E(Z_{n,1}|\mathcal{A}), \quad V=E(Z_{n,2}|\Sigma_{i+1})-E(Z_{n,2}|\mathcal{A})
\end{align}
Then, using the fact that $E[X^2|\Sigma_{i+1}]\geq E[X|\Sigma_{i+1}]^2$ for some $\Sigma_{i+1}$ measurable r.v. $X$, we have:
\begin{equation}
E(Z_{n,1}-Z_{n,2})^2 \geq E[E(Z_{n,1}|\Sigma_{i+1})-E(Z_{n,2}|\Sigma_{i+1})]^2 = E(U-V)^2
\end{equation}

Notice that conditional on $\mathcal{A}$, $U$ is a function of  $\{\xi_2,\eta_{23}, \ldots ,\eta_{2,i+1}\}$, while 
$V$ is a function of $\{\xi_1,\eta_{13},\ldots,\eta_{1,i+1} \}$.
Thus, $U$ and $V$ are conditionally independent. Then, since $\mathcal{A} \subset \Sigma_{i+1}$, by the tower property of conditional expectations, we have that: 
\begin{align*}
E(U-V)^2&=E(U^2)-2E(UV)+E(V^2)=E(U^2)+E(V^2)-2E(E(U|\mathcal{A})E(V|\mathcal{A}))\\
&=E(U^2)+E(V^2),
\end{align*}
Now, we expand $E(U^2)$ as follows: 
\begin{align*}
     E(U^2)&=E((E(Z_{n,1}|\Sigma_{(i+1)})-E(Z_{n,1}|\mathcal{A}))^2]\\
   &\stackrel{(i)}{=}E((E(Z_{n,1}|\Sigma_{2:i+1})-E(Z_{n,1}|\Sigma_{3:i+1}))^2]\\
    &\stackrel{(ii)}{=} E[(E(Z_{n,n}|\Sigma_{1:i})-E(Z_{n,n}|\Sigma_{1:i-1}))^2]
    \\
    &= E[(E(Z_{n-1}|\Sigma_i)-E(Z_{n-1}|\Sigma_{i-1}))^2] \nonumber=E(d_i^2)  \nonumber
\end{align*}
Step $(i)$ holds because the random variables associated with node $1$ are not present in $Z_{n,1}$. Step $(ii)$ holds because $\xi_1, \ldots \xi_n$ and $\eta_{ij}, 1\leq i<j\leq n$ are i.i.d random variables, and $E[Z_{n,1}|\Sigma_{2:i+1}]$ ( $E[Z_{n,1}|\Sigma_{3:i+1}]$ ) and $E[Z_{n,n}|\Sigma_{1:i}]$ ($E[Z_{n,n}|\Sigma_{1:i-1}]$) are equal in distribution.

Similarly, $EV^2=Ed_i^2$, $E(U-V)^2=2Ed_i^2$. Thus,
\begin{equation}
    E(Z_{n,1}-Z_{n,2})^2 \geq E(U-V)^2= 2Ed_i^2
\end{equation}
\begin{equation}
    E\left(\sum_{i=1}^n(Z_{n,i}-\bar{Z}_n)^2\right)=\frac{n-1}{2}E(Z_{n,1}-Z_{n,2})^2\geq(n-1)Ed_i^2=\var \ Z_{n-1}
\end{equation}
 \end{proof}

\section{Proof of Theorem~\ref{thm:jkconsistent}}\label{sec:suppthm2}
 For notational convenience, let $Z_n = \hat{P}(R)$ and let $Z_{n,i}$ denote the subgraph frequency defined in Eq~\ref{eq:normalized-estimator} with node $i$ removed:  
\begin{align}
Z_{n,i} = \rho_n^{-e} \frac{1}{{n-1 \choose p} \ |\mathrm{Iso}(R)| } \sum_{S \sim R, \ i \not \in V(S)} \mathbbm{1}(S = G_n[S])
\end{align}
We first present a lemma that will be used in the proof.  An identity relating the mean of leave-one-out jackknife estimates to a U-statistic plays an important role in the proof of jackknife consistency for U-statistics. Using a novel combinatorial argument, we show that a similar identity holds for normalized subgraph counts: 
\begin{lemma}\label{lem:zn-bar=zn}
Letting $Z_{n,i}$ and $Z_n$ be defined as above, we have that:
\begin{equation*}
    \bar{Z}_n:=\frac{1}{n}\sum_{i=1}^nZ_{n,i}=Z_n
\end{equation*}
\end{lemma}
\begin{proof}
For a subgraph with $p$ nodes and $e$ edges, denote the number of this subgraph in $G_n$ as $Q$. Denote the number of subgraphs node $i$ is involved in as $Q_i$. We now analyze $\sum_{i=1}^n Q_i$. For each vertex set with cardinality $p$, a given subgraph is counted once from each vertex.  Therefore, $\sum_{i=1}^nQ_i = pQ$.  

Observe that $Z_{n,i} + Q_i = Q$ since the set of subgraphs that do not contain node $i$ and the set of subgraphs that contain node $i$ are disjoint and their union gives the set of subgraphs counted in $Q$. It follows that:
\begin{equation*}
    \frac{1}{n}\sum_i Z_{n,i}= \frac{\frac{1}{n}\sum_i(Q-Q_i)}{{n-1 \choose p}\rho_n^e}=\frac{(n-p)Q}{n{n-1 \choose p}\rho_n^e}=\frac{Q}{{n \choose p}\rho_n^e}=Z_n.
\end{equation*}\\
\end{proof}
%
Now, we introduce the limiting value of the scaled variance, which represents the value we are aiming for with the jackknife. \citet{Bickel-Chen-Levina-method-of-moments} show that the  asymptotic behavior of $\hat{P}(R)$ is driven by a U-statistic corresponding to the edge structure of the subgraph.  For a subgraph $R$ with $V(R) = \{1,\ldots, p\}$, define the kernel:
\begin{align}
\label{eq:edge-structure-kernel}
h(x_1, \ldots, x_p) &=  \frac{1}{|Iso(R)|}\sum_{S \sim R, \ V(S) = \{1,\ldots, p\}} \prod_{(i,j)\in E(S)} w(x_i,x_j)    
\end{align}

Theorem 1 of \citet{Bickel-Chen-Levina-method-of-moments} establishes that:
\begin{align*}
n \cdot \mathrm{\var} \ \hat{P}(R) \rightarrow \sigma^2
\end{align*}
where $\sigma^2 = p^2 \zeta$ is the variance of the U-statistic with kernel $h$, with $\zeta = \mathrm{Var}(E(h(\xi_1, \ldots, \xi_p)|\xi_1))$. We will now scale the jackknife variance by $n$ to study its asymptotics. Let:
\begin{align}
\label{eq:ai}
    \alpha_i= Z_{n,i}- E(Z_{n,i}|\boldsymbol{\xi}_n), \quad \beta_i= E(Z_{n,i}|\boldsymbol{\xi}_n)
\end{align}
For simplicity we will use $\aibar$ (or $\bibar$) to denote the average of $\alpha_i$ (or $\beta_i$). 
Now, consider the following signal-noise decomposition:
\begin{align}
  n \cdot \sum_{i=1}^n (Z_{n,i}-\bar{Z}_n)^2  &= 
n \cdot \sum_{i=1}^n(\alpha_i-\aibar+\beta_i-\bibar)^2
    \notag\\ &=  n \cdot \sum_{i=1}^n(\alpha_i-\aibar)^2+2 n \cdot \sum_{i=1}^n(\alpha_i-\aibar)(\beta_i-\bibar)\notag\\
    &+ n \cdot \sum_{i=1}^n(\beta_i-\bibar)^2. \label{eq:signalnoise}
\end{align}

We start by bounding the third sum, which is the signal in our decomposition.  Observe that $\beta_i$ is a U-statistic with the kernel $h$ defined in (\ref{eq:edge-structure-kernel}); therefore, by  Theorem 1 and its following discussions of Chapter 5 in \citet{Lee-Ustats},  we have that:
\begin{align}\label{eq:betaisingle}
n \cdot \sum_{i=1}^n(\beta_i-\bibar)^2 \xrightarrow{P} \sigma^2
\end{align}

The result will follow if we show that the remaining two sums in the decomposition are negligible.  If the first sum is negligible, the Cauchy-Schwarz inequality would imply that:
\begin{align*}
n \cdot \sum_{i=1}^n(\alpha_i-\aibar)(\beta_i-\bibar) & \leq n \cdot \sqrt{\sum_{i=1}^n(\alpha_i-\aibar)^2 \cdot \sum_{i=1}^n(\beta_i-\bar{\beta}_n)^2} \xrightarrow{P} 0 
\end{align*}
It remains to show that: $n \cdot \sum_{i=1}^n(\alpha_i-\aibar)^2 \xrightarrow{P}0$. Now, observe that:
\begin{equation*}
    \sum_{i=1}^n(\alpha_i-\aibar)^2= \sum_{i=1}^n \alpha_i^2 -n \aibar^2
\end{equation*}

Expanding the square for $\sum_{i=1}^n \alpha_i^2$ we have that: 
\begin{align*}
    \sum_{i=1}^n \alpha_i^2 & = \sum_{i=1}^n (Z_{n,i}-E(Z_{n,i}|\boldsymbol{\xi}_n))^2 \\
    & =  \sum_{i=1}^n \begin{pmatrix}n-1 \\ k\end{pmatrix}^{-2} \sum_{S \sim R, \ i \not\in V(S)} (\rho_n^{-e}\psi(S)- W(S))\sum_{T \sim R, \ i \not\in V(T)} (\rho_n^{-e}\psi(T)- W(T))
\end{align*}
where $\psi(S)$ and $W(S)$ are given by:  
\begin{align*}
    \psi(S) &= \frac{1}{|Iso(R)|} \prod_{(i,j) \in E(S), \  S\sim R} A_{ij} \ \times \prod_{(i,j) \in \overline{E(S)}, \  S\sim R} 1-A_{ij},
    \\ W(S)&= \frac{1}{|Iso(R)|} \prod_{(i,j)\in E(S), \ S\sim R} w(\xi_i,\xi_j) \ \times \prod_{(i,j) \in \overline{E(S)}, \  S\sim R} 1-\rho_n w(\xi_i, \xi_j)
\end{align*}
and $\overline{E(S)}$ are $(i,j) \in V(S)\times V(S)$ that are not contained in $E(S)$. 
Now, similar to \citet{Lee-Ustats}, we group elements in the sum based on the number of elements in $V(S)\cap V(T)$. For each $|V(S)\cap V(T)|=c$, there are $n-2p+c$ terms in total.  It follows that:
\begin{align*}
     \sum_{i=1}^n \alpha_i^2 &= \begin{pmatrix}n-1 \\ p \end{pmatrix}^{-2}  \sum_{c=0}^p (n-2p+c) \sum_{|V(S)\cap V(T)|=c}(\rho_n^{-e}\psi(S)- W(S))(\rho_n^{-e}\psi(T)- W(T)) \\
     & = \begin{pmatrix}n-1 \\ p\end{pmatrix}^{-2}  \sum_{c=0}^p (n-2p+c) \sum_{|V(S)\cap V(T)|=c}\gamma(S,T), \ \ \text{say.}
\end{align*}
Now we turn to $n{\aibar}^2$; 
\begin{equation*}
    \aibar= \frac{1}{n} \sum_i Z_{n,i}  -  \frac{1}{n} \sum_i E(Z_{n,i} |\boldsymbol{\xi}_n)\stackrel{(i)}{=} Z_n - E(Z_n|\boldsymbol{\xi}_n) 
\end{equation*}
Equality (i) follows from Lemma~\ref{lem:zn-bar=zn}. 
Now expanding $\aibar^2$ in a similar manner, we have that
\begin{align*}
    \aibar^2=
    \frac{(n-p)^2}{n} \begin{pmatrix}n-1 \\ p\end{pmatrix}^{-2}  \sum_{c=0}^p\sum_{|V(S)\cap V(T)|=c} \gamma(S,T), 
\end{align*}

Then,
\begin{align*}
     n \cdot \sum_{i=1}^n(\alpha_i-\aibar)^2 &= \begin{pmatrix}n-1 \\ p\end{pmatrix}^{-2}  \sum_{c=0}^p \left(n-2p+c-\frac{(n-p)^2}{n}\right) \sum_{|V(S)\cap V(T)|=c}\gamma(S,T) \\
     &=\sum_{c=0}^p \sum_{|V(S)\cap V(T)|=c}\left(c-\frac{p^2}{n}\right)\cdot \begin{pmatrix}n-1 \\ 2\end{pmatrix}^{-2} \gamma(S,T)
\end{align*}
Now, taking expectations, we have that:
\begin{align*}
   &\quad E\left(\begin{pmatrix}n-1 \\ p\end{pmatrix}^{-2}  \sum_{c=0}^p\sum_{|V(S)\cap V(T)|=c} \gamma(S,T)\right)\\
   &=  E\left(\begin{pmatrix}n-1 \\ p\end{pmatrix}^{-2}  \sum_{c=0}^p\sum_{|V(S)\cap V(T)|=c}\biggl(\rho_n^{-e}\psi(S)- W(S)\biggr)\biggl(\rho_n^{-e}\psi(T)- W(T)\biggr) \right)\\ &=E \left[\sum \mathrm{Cov}(S,T|\boldsymbol{\xi}_n)\right] = o\left(\frac{1}{n}\right) 
\end{align*}
where the last line follows from the proof of Theorem 1 of \citet{Bickel-Chen-Levina-method-of-moments}.  

Now, by Markov inequality, we have that 
\begin{align}\label{eq:aiconv}
    n \cdot \sum_{i=1}^n(\alpha_i-\aibar)^2 \xrightarrow{P} 0
\end{align} and the result follows.  


\section{Proof of Theorem~\ref{thm:smooth}} \label{sec:suppthm3}
\begin{proof}
Let  $Z_{n,i} = (Z_{n,i}(1), \ldots Z_{n,i}(d))$, where $d$ is a constant w.r.t $n$ and each entry corresponds to a count functional with node $i$ removed.  Each count functional   may involve subgraphs of different sizes.
We will use a Taylor expansion around $\bar{Z}_{n}$.
\begin{align*}
    f(Z_{n,i})&=f\bigl(\bar Z_n) +\nabla f(\zeta_i)^T (Z_{n,i}-\bar{Z}_n)\\
    &=f\bigl(\bar Z_n) +\nabla f(\mu)^T (Z_{n,i}-\bar{Z}_n)+\underbrace{(\nabla f(\zeta_i)-\nabla f(\mu))^T (Z_{n,i}-\bar{Z}_n)}_{E_i},
\end{align*}
where $\zeta_i = (\zeta_{i1}, \ldots, \zeta_{id})=c_i Z_{n,i}+(1-c_i)\bar Z_n $ for some $c\in [0,1]$. 
Thus, we also have:
\begin{align}\label{eq:smoothdecomp}
\begin{split}
     f(Z_{n,i})-\overline{f(Z_{n,i})}&= \underbrace{\vphantom{E_i-\frac{1}{n}\sum_i E_i} \nabla f(\mu)^T (Z_{n,i}-\bar Z_n)}_{I_i}+\underbrace{ E_i-\frac{1}{n}\sum_i E_i}_{II_i}
\end{split}
\end{align}
For the first part we see that,
\begin{align}\label{eq:I}
    n \sum_i (I_i)^2&=
   n \nabla f(\mu)^T \left(\sum_i(Z_{n,i} - \bar{Z}_n)(Z_{n,i} - \bar Z_n)^T \right)\nabla f(\mu)
\end{align}
We will first show that the inner average of the above expression converges to the covariance matrix of $Z_{n,i}$ (recall that here we are considering a finite dimensional vector). Extending the same argument in Eq~\ref{eq:signalnoise} to finite dimensional $Z_{n,i}$'s (and $\alpha_i$ and $\beta_i$'s defined in Eq~\ref{eq:ai}),
\begin{align*}
&n\sum_i(Z_{n,i} - \bar{Z}_n)(Z_{n,i} - \bar{Z}_n)^T \\
&= n\sum_i \left( (\alpha_i-\aibar)(\alpha_i-\aibar)^T+ (\alpha_i-\aibar)(\beta_i-\bibar)^T+(\beta_i-\bibar)(\alpha_i-\aibar)^T\right.\\
&\left.\qquad \qquad +(\beta_i-\bibar)(\beta_i-\bibar)^T\right)
\end{align*}
By Theorem 9 of \citet{arvesen1969} we have that:
\begin{align}\label{eq:betaimulti}
   n\sum_i (\beta_i-\bibar)(\beta_i-\bibar)^T  \cp \Sigma
\end{align}

Above, $\Sigma$ is the covariance matrix of a multivariate U-statistic with kernels $(h_1, \ldots, h_d)$, where each $h_j$ is the kernel corresponding to the count functional in the $j^{th}$ coordinate of the vector $Z_{n}$ (see Eq \ref{eq:edge-structure-kernel}).
\bk
\bk
Now combining Eq~\ref{eq:betaimulti} with Eq~\ref{eq:I} we see that,
\begin{align}\label{eq:Isq}
    \left|n \sum_i (I_i)^2 - f(\mu)^T \Sigma f(\mu) \right| &\leq \|\df\|^2 n\sum_i \|\alpha_i-\aibar\|^2\notag\\
    &+2n\|\df\|^2\sum_i |(\alpha_i-\aibar)^T (\beta_i-\bibar)|
\end{align}

The first part is $o_p(1)$ by an analogous argument leading to Eq~\ref{eq:aiconv}. For the second part, we see that an application of Cauchy Schwarz inequality gives:
\begin{align*}
    n\sum_i |(\alpha_i-\aibar)^T (\beta_i-\bibar)|\leq \sum_{j=1}^d \sqrt{\left(\sum_i n(\alpha_{i}(j)-\aibar(j))^2\right) \left(n \sum_i (\beta_{i}(j)-\bibar(j))^2\right)}
\end{align*}
The first part inside the square root is $o_p(1)$ due to Eq~\ref{eq:aiconv}, and the second part is $O_p(1)$ by Eq~\ref{eq:betaisingle}. Using this in conjunction with Eq~\ref{eq:Isq} and since $\|\df\|$ is bounded, we see that:
\begin{align*}
    \left|n \sum_i (I_i)^2 - \df^T \Sigma \df\right|=o_p(1)
\end{align*}
All that remains now is to show that part $II_i$ in Eq~\ref{eq:smoothdecomp} is negligible even when summed and multiplied by $n$. First note that $(II_i)^2\leq E_i^2$.
    \begin{align}\label{eq:smoothpart2}
    n\sum_i (II_i)^2 &\leq n\sum_i |(\nabla f(\zeta_i)-\nabla f(\mu))^T (Z_{n,i}-\bar Z_n)|^2\notag\\
    &\leq \max_i\|\nabla f(\zeta_i)-\nabla f(\mu)\|^2\left(n\sum_i (Z_{n,i}-\bar{Z}_n)^T(Z_{n,i}-\bar Z_n)\right)
\end{align}
Theorem~\ref{thm:jkconsistent} shows that the second part in the RHS of Eq~\ref{eq:smoothpart2} is $O_p(1)$. We will now show that the first part is asymptotically negligible.

Observe that:
\begin{align*}
     \max_i \| \zeta_i- \mu \| &\leq \max_i \ c_i\|
      Z_{n,i}- \mu \|+ \max_i \ (1-c_i)\|\bar Z_n-\mu \| 
      \\ & \leq \sqrt{d} \cdot \max_{i,j}|Z_{n,i}(j)- \bar Z_n(j)| + 2\|\bar Z_n-\mu \|  
      \\ & \leq \sqrt{d} \cdot \max_j \sqrt{\sum_{i=1}^n\left(Z_{n,i}(j)- \bar Z_n(j)\right)^2} + 2\| Z_n-\mu \|   
\end{align*}

Above, $\bar{Z}_n=Z_n$ by Lemma~\ref{lem:zn-bar=zn}. The first term on the RHS converges in probability to 0 from our Theorem~\ref{thm:jkconsistent}. By Theorem 1 of~\cite{Bickel-Chen-Levina-method-of-moments}, $\|Z_n - \mu \|$  is also negligible. 
Since $ \max_i \| \zeta_i- \mu \|=o_p(1)$ and $\nabla f$ is continuous at $\mu$, by continuity,  we have that $\max_i\|\nabla f(\zeta_i)-\nabla f(\mu)\|^2= o_p(1)$.  Since the second term on the RHS of Eq~\ref{eq:smoothpart2} is $O_p(1)$ from our previous argument and the first term is $o_p(1)$, it follows that the LHS of Eq~\ref{eq:smoothpart2} is $o_p(1)$.    

Let $\mu_n=E[Z_n]$. Note that if one counts subgraphs by an exact match as in~\cite{Bickel-Chen-Levina-method-of-moments} $\mu_n\rightarrow \mu$. If one counts subgraphs via edge matching, $\mu_n=\mu$. Thus, both these types of subgraph densities, which asymptotically have the same limit, can be handled by our theoretical results. 
By Theorem 3.8 in \citet{van2000asymptotic},
\begin{equation*}
    \sqrt{n}(f(Z_n)-f(\mu_n)) \rightsquigarrow N(0,\nabla f(\mu)^T \Sigma \nabla f(\mu) )
\end{equation*}
This shows that the jackknife estimate of variance converges to the asymptotic variance of $f(Z_n)$.

\end{proof}

\section{Proof of Proposition~\ref{prop:degree}}\label{sec:suppEDunbiased}
Throughout this section, we will use the notation $x_n\asymp y_n$ to denote $x_n=y_n(1+o(1))$.
Before presenting the proof, we present two accompanying lemmas which will be used in the proof of Proposition~\ref{prop:degree}.

\begin{lemma}\label{lem:var}
Denote $\dni$ the degree of node  $i$ in the size $n$ graph. \begin{align*}
\begin{split}
    \sum_{i=1}^{n-1}\var\left(\frac{\dni}{{n-1 \choose 2}\rho_n}\right)&\asymp \frac{4}{n^3}E(\var\sum_{k,k\neq i}w(\xi_i,\xi_k)|\xi_i) \\ &+\frac{4}{n}\var[E (w(\xi_i,\xi_k)|\xi_i)] + O(n^{-2}\rho_n^{-1}).
\end{split}
\end{align*}
\end{lemma}

\begin{lemma}\label{lem:cov}
Denote $\dni$ the degree of node  $i$ in the size $n$ graph. 
\begin{equation*}
    \sum_{i,j,i\neq j}\cov\left(\frac{\dni}{{n-1 \choose 2}\rho_{n}},\frac{\dnj}{{n-1 \choose 2}\rho_{n}}\right) \asymp \frac{4}{n}\times 3 \var(E[w(\xi_i,\xi_j)|\xi_i])  + O(n^{-2}\rho_n^{-1})
\end{equation*}
\end{lemma}

We will use the above to lemmas to prove Proposition~\ref{prop:degree}, which we now present.

\begin{proof}
Denote $D_n$ as the total number of edges in graph $G_n$. By definition,
\begin{equation*}
    Z_n=\frac{D_n}{{n \choose 2}\rho_n}
\end{equation*}
Denote $\dni$ the degree of node  $i$ in the size $n$ graph. 
We have that $E\dni=E\dnj$ for any node pair.  Thus the jackknife estimate of edges for a graph with node $i$ removed  is $D_n$ minus the degree of node $i$. 
Define
\begin{equation}
    \gamma_n={n-1 \choose 2}\rho_n; \quad  \gamma_n'={n-1 \choose 2}\rho_{n-1}
\end{equation}
Then by definition, we have
\begin{equation*}
    Z_{n,i}=\frac{D_n-\dni}{{n-1 \choose 2}\rho_n} = \frac{D_n-\dni}{\gamma_n}
\end{equation*}
Then, the jackknife estimate is
\begin{align}\label{eq:jkLHS}
    E\sum_{i=1}^n(Z_{n,i}-\bar Z_n)^2 &= \frac{1}{2n}\sum_{i\neq j}E(Z_{n,i}-Z_{n,j})^2= \frac{1}{2n}\sum_{i\neq j} E\left(\frac{\dni-\dnj}{\gamma_n}\right)^2 \notag\\ &= \sum_{i=1}^{n-1}\var\left(\frac{\dni}{\gamma_{n}}\right)-\frac{1}{n}\sum_{i\neq j}\cov\left(\frac{\dni}{\gamma_n},\frac{\dnj}{\gamma_n}\right)
\end{align}
\noindent whereas the total number of degrees in a $(n-1)$ graph is $D_{n-1}=\sum_{i=1}^{n-1}\dnip/2$ as each edge is counted 2 times from each node. We first obtain an expression for $\var \ Z_{n-1}$.

\begin{align}\label{eq:jkRHS}
    \var \ Z_{n-1} &=\var\left(\frac{\sum_{i=1}^{n-1}\dnip/2}{{n-1 \choose 2}\rho_{n-1}}\right)=\frac{1}{4}(n-1)  \var\left(\frac{\dnip}{\gamma_{n}'}\right)\\
    &+\frac{1}{4}\sum_{i,j,i\neq j}\cov\left(\frac{\dnip}{\gamma_{n}'},\frac{\dnjp}{
    \gamma_{n}'}\right)
\end{align}


For the second term in the R.H.S of Eq~\ref{eq:jkLHS}, from Lemma ~\ref{lem:cov}, it is easy to check that it is $O(n^{-2})$. Thus scaling  Eq~\ref{eq:jkLHS} by $n-1$ we have,
\begin{align}\label{eq:scaled-jk-LHS}
\begin{split}
    &(n-1)E\sum_{i=1}^n(Z_{n,i}-\bar Z_n)^2 = (n-1)\sum_{i=1}^n\var\left(\frac{\dni}{\gamma_{n}}\right) + O\left(\frac{1}{n}\right)\\
    &=\frac{4}{n^2}E[\var\sum_{k,k\neq i}w(\xi_i,\xi_k)|\xi_i] + 4 \var[E(w(\xi_i,\xi_k)|\xi_i)] +O\left(\frac{1}{n\rho_n}\right)+ O\left(\frac{1}{n}\right)
\end{split}
\end{align}

Plugging in Lemma \ref{lem:cov} into the second term of R.H.S of Eq~\ref{eq:jkRHS} and scaling Eq~\ref{eq:jkRHS} by $n-1$, we have

\begin{align}\label{eq:scaled-jk-RHS}
\begin{split}
&(n-1)\var \ Z_{n-1}\\
&=\frac{1}{n^2}E[\var\sum_{k,k\neq i}w(\xi_i,\xi_k)|\xi_i] + \var[E(w(\xi_i,\xi_k)|\xi_i)] \\ &+ 3\var[E(w(\xi_i,\xi_k)|\xi_i)]+O\left(\frac{1}{n\rho_n}\right)  \\  
&= \frac{1}{n^2}E[\var\sum_{k \neq i}w(\xi_i,\xi_k)|\xi_i] + 4 \var[E(w(\xi_i,\xi_k)|\xi_i)]+O\left(\frac{1}{n\rho_n}\right) 
\end{split}
\end{align}

\noindent The difference between Eqs~\ref{eq:scaled-jk-LHS} and~\ref{eq:scaled-jk-RHS} is:
\begin{equation}
     (n-1)E(Z_{n,i}-\bar Z_n)^2-(n-1)\var \ Z_{n-1}=\frac{3}{n^2}E[\var\sum_{k, k\neq i}w(\xi_i,\xi_k)|\xi_i] + O\left(\frac{1}{n\rho_n}\right).
\end{equation}
Note that, we also have:
\begin{align}
    \frac{1}{n^2}E[\var\sum_{k,k\neq i}w(\xi_i,\xi_k)|\xi_i]=\frac{1}{n}E[\var ( w(\xi_i,\xi_k)|\xi_i)]=O\left(1/n\right)\label{eq:errorterm}
\end{align} 

Eq~\ref{eq:errorterm} establishes Eq~\ref{eq:prop1}. Furthermore, in conjunction with Eqs~\ref{eq:jkRHS} and~\ref{eq:jkLHS}, it also shows that both $(n-1) E\sum_{i=1}^n(Z_{n,i}- \overline{Z_{n}})^2$ and $ (n-1) \var \ Z_{n-1}$ converge to positive constants. This concludes our proof.
\end{proof}


\bk

We now present the proofs of Lemmas~\ref{lem:var} and~\ref{lem:cov}.

\begin{proof}[\textbf{Proof of Lemma~\ref{lem:var}}]

Applying law of total variance,
\begin{equation}\label{eq:vardecomp}
    \sum_{i=1}^{n-1}\var\left(\frac{\dni}{\gamma_{n}}\right)=\sum_{i=1}^{n-1}\var\left[E\left(\frac{\dni}{\gamma_n}\bigg{|}\xi\right)\right] + \sum_{i=1}^{n-1}E\left[ \var\left(\frac{\dni}{\gamma_{n}}\bigg{|}\xi\right)\right].
\end{equation}
We now show that the second term on the RHS of the above equation is small.
\begin{align}
    \sum_{i=1}^{n-1}E\left[ \var\left(\frac{\dni}{\gamma_{n}}\bigg{|}\xi\right)\right] &=\sum_{i=1}^{n-1}E\left[\var\left(\frac{\sum_{j\neq i}A_{ij}}{{n \choose 2}\rho_{n}}\bigg{|}\xi\right)\right]\notag\\
    &=\sum_{i=1}^{n-1}E\left(\frac{\sum_{j\neq i}\rho_n w(\xi_i,\xi_j)(1-\rho_n w(\xi_i,\xi_j))}{{n \choose 2}^2\rho_{n}^2}\right)  \nonumber \\
    &\asymp \sum_{i,j,i\neq j} \frac{\rho_n E[w(\xi_i,\xi_j)]}{n^4\rho_{n}^2}=O(n^{-2}\rho_n^{-1})
\end{align}
For the first term on the RHS of Eq~\ref{eq:vardecomp}, for any fixed $i$, we have:
\begin{align}
\label{eq:rhs2}
\begin{split}
    &\var\left(E\left[\frac{\dni}{\gamma_n}\bigg{|}\xi\right]\right) = \var E\left(\frac{\sum_{k,k\neq i}A_{ik}}{\frac{(n-1)(n-2)}{2}\rho_n}\bigg{|}\xi\right) 
    \asymp \frac{4}{n^4}\var\left(\sum_{k,k\neq i}w(\xi_i,\xi_k)\right)\\
    &\asymp \frac{4}{n^4}E\left(\var\sum_{k,k\neq i}w(\xi_i,\xi_k)|\xi_i\right)+\frac{4}{n^4}\var\left(E\sum_{k, k\neq i}w(\xi_i,\xi_k)|\xi_i\right).
\end{split}
\end{align}
Exchanging the sum and expectation in the second term, we can also write,
\begin{equation}
\label{eq:rhs1}
    \frac{4}{n^4}\var\left(E\sum_{k,k\neq i}w(\xi_i,\xi_k)|\xi_i\right)=\frac{4}{n^2}\var[E (w(\xi_i,\xi_k)|\xi_i)].
\end{equation}
Since Eq~\ref{eq:vardecomp} involves a sum over $n-1$ identical terms, owing to the fact that $\{\xi_i\}$ are i.i.d, we get the result by multiplying Eq~\ref{eq:rhs2} and~\ref{eq:rhs1} by $n-1$.
\end{proof}

\begin{proof}[\textbf{Proof of Lemma~\ref{lem:cov}}]
We decompose the covariance into
\begin{align}\label{eq:covdecomp}
    \sum_{i,j,i\neq j}\cov\left(\frac{\dni}{\gamma_{n}},\frac{\dnj}{\gamma_{n}}\right)&= \sum_{i,j,i\neq j}\cov\left(E\left[\frac{\dni}{\gamma_{n}}\bigg{|}\xi\right],E\left[\frac{\dni}{\gamma_{n}}\bigg{|}\xi\right]\right) \notag\\
    &+ \sum_{i,j,i\neq j}E \left[\cov\left(\frac{\dni}{\gamma_{n}},\frac{\dnj}{\gamma_{n}}\bigg{|}\xi\right)\right] .
\end{align}
The second term on the RHS of the above equation is small as shown before. 
\begin{align*}
\begin{split}
     &\sum_{i,j,i\neq j}E \left[\cov\left(\frac{\dni}{\gamma_{n}},\frac{\dnj}{\gamma_{n}}\bigg{|}\xi\right)\right]  \\&= \sum_{i,j,i\neq j}E\left[\cov\left(\frac{\sum_{k,k\neq i} A_{ik}}{\gamma_n},\frac{\sum_{s,s\neq j} A_{js}}{\gamma_n}\bigg{|}\xi\right)\right] \\
    &\asymp \frac{1}{n^4\rho_n^2}\sum_{i,j}E[\var(A_{ij}|\xi)]\\
    & \asymp \frac{1}{n^2\rho_n^2}\rho_n E[w(\xi_i,\xi_j)]=O(n^{-2}\rho_n^{-1})
\end{split}
\end{align*}
For the first term in Eq~\ref{eq:covdecomp}, for any fixed $i$ and $j$, we have
\begin{align}\label{eq:covdidj}
\begin{split}
     &\cov\left(E\left[\frac{\dni}{\gamma_n}\bigg{|}\xi\right],E\left[\frac{\dnj}{\gamma_n}\bigg{|}\xi\right]\right)\\ &= \cov\left(\frac{\sum_{k}^{ k\neq i}w(\xi_i,\xi_k)\rho_n}{\frac{(n-1)(n-2)}{2}\rho_n},\frac{\sum_{s}^{s\neq j}w(\xi_j,\xi_s)\rho_n}{\frac{(n-1)(n-2)}{2}\rho_n}\right)  \\
    &\asymp \frac{4}{n^4}\cov\left(\sum_{k, k\neq i}w(\xi_i,\xi_k),\sum_{s,s\neq  j}w(\xi_j,\xi_s)\right)  \\
    &= \frac{4}{n^4}\sum_{k, k\neq i}\sum_{s,s\neq j} \cov(w(\xi_i,\xi_k),w(\xi_j,\xi_s)).
\end{split}
\end{align}
Let $S_i=\{i,k\}$, and $S_j=\{j,s\}$ be two pairs containing $i$ and $j$ respectively. Some algebraic manipulation yields,
\begin{equation}
\begin{split}
    \sum_{k, k\neq i}\sum_{s,s\neq j} \cov(w(\xi_i,\xi_k),w(\xi_j,\xi_s))
= \sum_{|S_i\cap S_j|=1}\cov(w(\xi_i,\xi_k),w(\xi_j,\xi_s))\\ +
   \sum_{|S_i\cap S_j|=2}\cov(w(\xi_i,\xi_k),w(\xi_j,\xi_s)).
\end{split}
\end{equation}
In the R.H.S of the above expression, the second summation has $n(n-1)$ terms, whereas the first has $n(n-1)(n-2)$ terms. Furthermore, for $|S_i\cap S_j|=2$, it is easy to see that $\cov(w(\xi_i,\xi_k),w(\xi_j,\xi_s))$ is simply the variance of $\var(w(\xi_i,\xi_k))$ which is positive. For $|S_i \cap S_j|=1$, W.L.O.G. let $S_i=\{i,u\}$ and $S_j=\{j,u\}$. Conditioned on the shared node $\xi_u$, 
\begin{align}\label{eq:cov-var-cond}
\cov(w(\xi_i,\xi_u),w(\xi_j,\xi_u))  &=
\cov[E(w(\xi_i,\xi_u)|\xi_u),E(w(\xi_j,\xi_u)|\xi_u)] \notag\\
&= \var(Ew(\xi_i,\xi_u)|\xi_u) 
\end{align}
which is also positive.
Hence the contribution of the first sum is of a larger order. 

Now we enumerate all the ways in which $S_i$ and $S_j$ can have a node in common, with the constraint of $i\neq j$. For any fixed $i$ and $j$, s.t. $i\neq j$,  $|S_i\cap S_j|=1$ means that there is $1$ common node in  $S_i=\{i,k\}$ and  $S_j=\{j,s\}$.  There are three possible cases, $i=s$, $k=j$, $k=s$. Thus, Eq~\ref{eq:covdidj} can be expanded as (W.L.O.G, suppose $i=s$),
\begin{align}\label{eq:covEij}
     \cov\left(E\left[\frac{\dni}{\gamma_n}\bigg{|}\xi\right],E\left[\frac{\dnj}{\gamma_n}\bigg{|}\xi\right]\right) &\asymp \frac{4}{n^4}[3(n-2)\cov(w(\xi_i,\xi_k),w(\xi_j,\xi_i)) ] \nonumber \\ &=
   \frac{4}{n^3}\times 3 \cov(w(\xi_i,\xi_k),w(\xi_j,\xi_i)) \nonumber \\
   &\stackrel{(i)}{=}\frac{4}{n^3}\times 3 \var(E(w(\xi_i,\xi_k))|\xi_i)  
\end{align}
  
Step $(i)$ uses an analogous argument from Eq~\ref{eq:cov-var-cond}, and conditions on $\xi_i$. 

Eq~\ref{eq:covdecomp} involves a sum over all $(i,j)$ pairs, $i\neq j$, , owing to the fact that $\{\xi_i\}$ are i.i.d, we get the result by multiplying Eq~\ref{eq:covEij} by $n(n-1)$.
\end{proof}

\section{Proof of Proposition~\ref{prop:centering-result}}

Before we state the proof of our result, recall the following well-known relationship between uniform integrability and convergence of moments. See for example, Theorem 25.12 of \citet{billingsley-probability-measure}.
\begin{proposition}
\label{prop:uniform-integrability-moments}
Suppose that $X_n \rightsquigarrow X$ and $\{X_n\}_{n \geq 1}$ is uniformly integrable. Then, $E(X_n) \rightarrow E(X)$.
\end{proposition}

Now we will prove our proposition below:

\begin{proof}
In what follows let $X_n:=\tau_n[\hat{\theta}_n -E(\hat{\theta}_n)]$ and $V_n = \tau_n \cdot U_n$.  Recall that $U_n=\hat{\theta}_n-\theta$.
While our result here is more general, in a jackknife context, $\hat{\theta}_n = Z_n$ following the notation that we use elsewhere. Consider the following decomposition:
\begin{align*}
\tau_n[\hat{\theta}_n - E(\hat{\theta}_n)] = \tau_n[\hat{\theta}_n - \theta] + E(\tau_n[\theta - \hat{\theta}_n]) 
\end{align*}
Since $\{V_n^2\}_{n \geq 1}$ is uniformly integrable, it follows that $\{V_n\}_{n \geq 1}$ is also uniformly integrable.  Therefore, by Proposition \ref{prop:uniform-integrability-moments}, $E(\tau_n[\theta - \hat{\theta}_n]) \rightarrow 0$. By Slutsky's Theorem, it follows that $\tau_n[\hat{\theta}_n - E(\hat{\theta}_n)] \rightsquigarrow U$.  

To show that the variances converge to the same value, observe that $E(X_n^2)$ is given by:
\begin{align*}
E(X_n^2) = E(V_n^2) - (E(V_n))^2
\end{align*}
First, $V_n^2 \rightsquigarrow U^2$ by continuous mapping theorem. Since $\{V_n^2\}_{n \geq 1}$ is uniformly integrable, $E(V_n^2) \rightarrow E(U^2)$ by Proposition \ref{prop:uniform-integrability-moments} again. Finally, $(EV_n)^2 \rightarrow 0$ and the result follows.   
\end{proof}

\section{Additional theory}\label{sec:addlth}

 It should be noted that a similar inequality for a closely related procedure has an even simpler proof.  This alternative procedure does not require the functional to be invariant to node permutation and allows flexibility with the leave-one-out estimates.  However, the resulting estimate is often not sharp. More concretely, let $Z_n$ denote a function of $A^{(n)}$ and let $\widetilde{Z}_{n,i}$ be an arbitrary functional calculated on a graph with node $i$ removed. Consider the following estimator:
 \begin{align}
\vj \ Z_n = \sum_{i=1}^n (Z_n-\widetilde{Z}_{n,i})^2
 \end{align}
 Combining the aforementioned filtration with arguments in \citet{boucheron-lugosi-massart-concentration-chapter} leads to the following inequality:
 \begin{proposition}[Network Efron-Stein, alternative version]
 \begin{align}
\var \ Z_n\leq E( \vj \ Z_n )
 \end{align} 
 \end{proposition}

\section{Additional experiments}
\label{sec:suppexp}

%

We first present Tables~\ref{tab:3college_size} and~\ref{tab:6college_size} with details of the networks we used in our real data experiments in Section~\ref{sec:exp} of the main paper. 
\begin{table}[h]
    \centering
     \caption{\label{tab:3college_size}Details of college networks for first real data experiment (see Figure~\ref{fig:3colleges} of main paper)}
     \vspace{3mm}
    \begin{tabular}{|lccr|}
        \hline 
       &Caltech &Williams &Wellesley  \Tstrut\Bstrut\\
       \hline 
         Nodes &769 &2790 &2970 \Tstrut\Bstrut\\[1ex]
         Edges & 16656& 112986& 94899\\[1ex]
         Average Degree & 43.375 & 63.927 &81.023 \\[1ex]
          \hline
    \end{tabular}
\end{table}
\begin{table}[h]
    \centering
     \caption{\label{tab:6college_size}Details of college networks for second real data experiment (see Figure~\ref{fig:6colleges} of main paper)}
     \vspace{3mm}
    \begin{tabular}{|lcccccr|}
        \hline
      &Berkeley &Stanford &Yale &Princeton &Harvard &MIT \Tstrut \\[1ex]
       \hline
         Nodes &22937  &11621 &8578 &6596 &15126 &6440\Tstrut\\[1ex]
         Edges & 852444&568330&405450&293320&824617&251252\\[1ex]
         Average Degree  &74.332 &97.819&94.544 & 88.952 &109.040& 78.040\\[1ex]
          \hline
    \end{tabular}
    \label{tab:college_size}
\end{table}

For our real data experiments, (Section~\ref{sec:exp} of main paper) we compared subsampling with jackknife on the three colleges (see Figure~\ref{fig:3colleges}). For simplicity, for the second experiment comparing three pairs of college networks (see Figure~\ref{fig:6colleges}),  we only showed the confidence intervals obtained using jackknife. Here, in Figure~\ref{fig:ss-jk-6colleges}, for completeness, we present  confidence intervals for test sets constructed from the six college networks using both jackknife and subsampling with different choices of $b$. This again shows that jackknife CI's mostly are in agreement with those obtained from subsampling. \bk

\begin{figure*}
\begin{center}
\includegraphics[width=\textwidth]{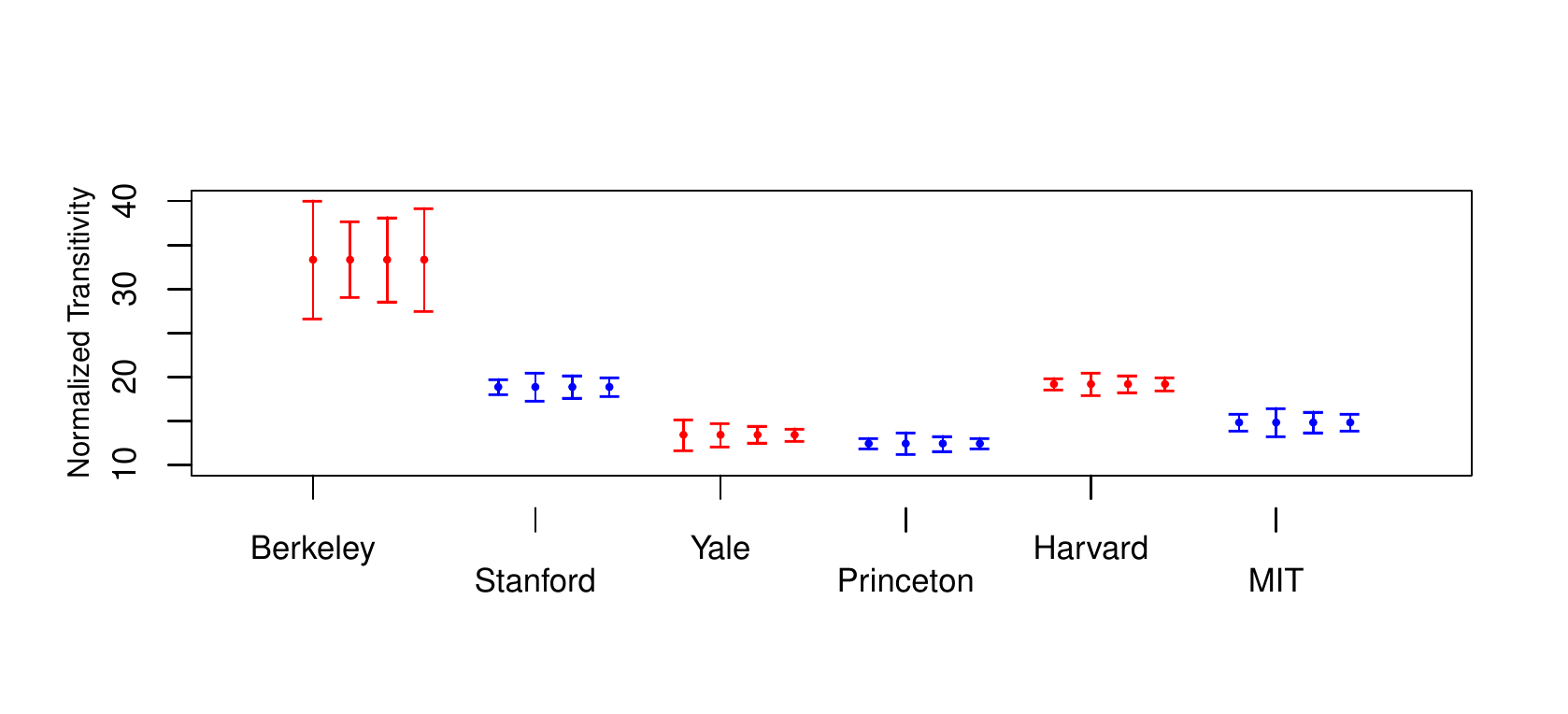}
\vspace{-0.5cm}
\caption{Confidence intervals of subsampling and jackknife in calculating triangle, two-star densities and normalized transitivity in the example of six college Facebook networks test sets. The four CIs for each college are in the order of jackknife, subsampling with b=0.05n, b=0.1n, and b=0.2n respectively.}
\label{fig:ss-jk-6colleges}
\end{center}
\vskip -0.2in
\end{figure*}
\begin{figure*}[h]
\begin{center}
\includegraphics[width=\textwidth]{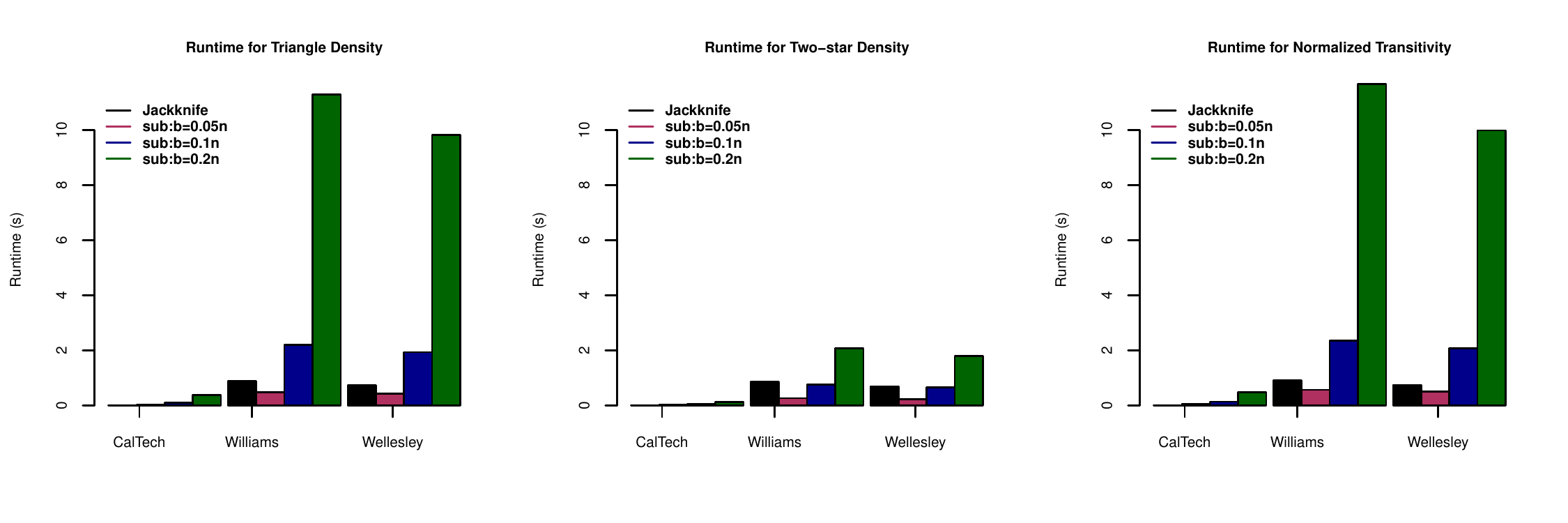}
\vspace{-0.5cm}
\caption{Computation time of jackknife compared to subsampling in calculating triangle, two-star densities and normalized transitivity in the example of three college Facebook networks.}
\label{fig:runtime-3colleges}
\end{center}
\vskip -0.2in
\end{figure*}

In addition, we show the timing results our real data experiments. 
Figure~\ref{fig:runtime-3colleges} shows computation time of the three college example of Facebook network data (see Figure~\ref{fig:3colleges}). We demonstrate the triangle, two-star densities and normalized transitivity variance computation time using jackknife and subsampling with $b=0.05n$, $b=0.1n$ and $b=0.2n$, $B=1000$ in each college network.

In
Figure~\ref{fig:runtime-6colleges}, we show 
the computation time of variance estimation for the same statistics on the test sets for the same set of algorithms. Since we split training and test set in half, the training sets have approximately the same time.

These figures show that, it is possible to implement jackknife in a computationally efficient manner when there is nested structure in the subgraph counts. In all these cases, we see that for the larger networks, subsampling with large $b$ is often considerably slower than jackknife. \bk
\begin{figure*}[t]
\begin{center}
\includegraphics[width=0.85\textwidth]{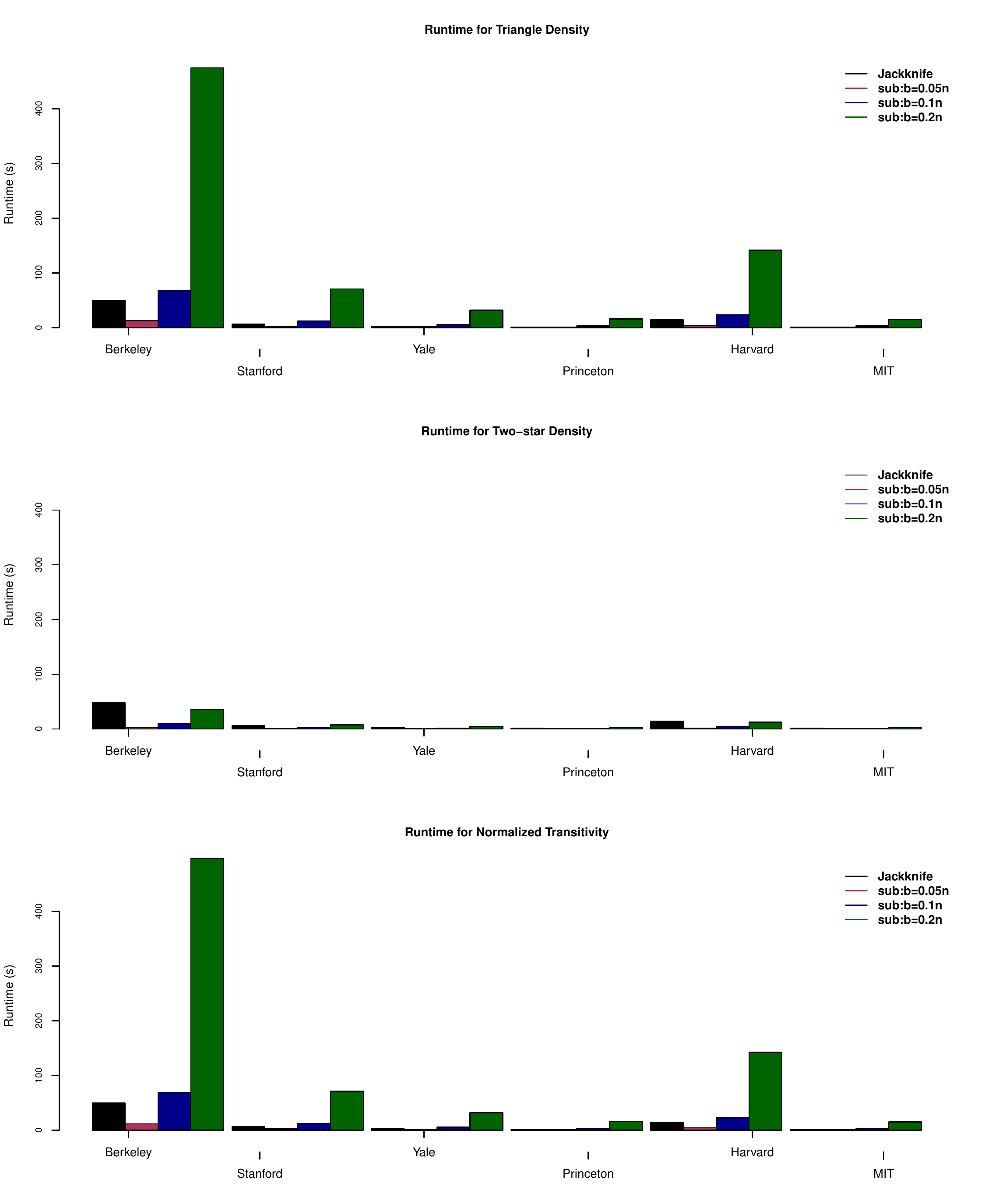}
\vspace{-0.5cm}
\caption{Computation time of jackknife compared to subsampling in calculating triangle, two-star densities and normalized transitivity in the example of six college Facebook networks test sets.}
\label{fig:runtime-6colleges}
\end{center}
\vskip -0.2in
\end{figure*}

\end{appendices}
\clearpage

\bibliographystyle{apalike}
\bibliography{example_paper}

\end{document}


\appendix
\section{Proof of Theorem~\ref{thm:netefron}}
\label{sec:suppES}
To facilitate the proof below, we will explicitly define the data generating mechanism for the Bernoulli trials defined in Eq~\ref{eq:sparse-graphon}.  For $1 \leq i < j \leq n$, define the random variable $\eta_{ij} \sim \mathrm{Unif}[0,1]$ and let $A_{ij} = \mathbbm{1}(\eta_{ij}\leq \rho_n w(\xi_i, \xi_j) \wedge 1)$.  We may view a function $f$ that takes as input a $n-1 \times n-1$ adjacency matrix as a function $g$ of the underlying latent positions.
We require that $g$ is invariant to node-permutation, meaning that $g$ remains unchanged when some (bijective) permutation function $\varphi:\{1,2,\ldots, n-1\} \mapsto \{1,2,\ldots,n-1\}$ is applied to the indices corresponding to $\xi_i$ and both the row and column indices of $\eta_{ij}$ separately. 

In what follows, let $\boldsymbol{\xi}_n = (\xi_i)_{1 \leq i \leq n}$ and $\boldsymbol{\eta}_n = (\eta_{ij})_{1\leq i < j \leq n}$.  Furthermore, we will let $\boldsymbol{\xi}_{n,i}$ denote the vector formed by removing node $i$ and $\boldsymbol{\eta}_{n,i}$ denote the (concatenated) vector formed by removing all elements containing row or column index $i$.     
 
 \begin{proof}
Let $Z_{n,i} = g(\boldsymbol{\xi}_{n,i}, \boldsymbol{\eta}_{n,i})$ denote the functional calculated on an induced subgraph of $n-1$ nodes excluding node $i$.  As before, let $Z_{n-1} = Z_{n,n}$.  
Construct the following martingale difference sequence:
\begin{equation}
    d_i=  E(Z_{n-1}|\Sigma_i)-E(Z_{n-1}|\Sigma_{i-1})
\end{equation}
Here, we consider a filtration introduced by \citet{borgs-convergence-dense-graphons-1}, which was originally used to establish exponential concentration for certain subgraph frequencies in the dense regime.

Let $\Sigma_0 =\{\emptyset, \Omega\}$, $\Sigma_1 = \sigma(\xi_1)$, $\Sigma_2 = \sigma(\xi_1,\xi_2, \eta_{12})$, $\Sigma_3 = \sigma(\xi_1,\xi_2, \xi_3, \eta_{12}, \eta_{13},\eta_{23})$ and so forth up to $n$.  The filtration we consider has the following interpretation: for each time $1 \leq t \leq n$, suppose that we observe a $t \times t$ adjacency matrix induced by the nodes $\{1,2,\ldots,t\}$. Then, $\Sigma_t$ captures all of the randomness in the corresponding induced subgraph.  We may visualize $\Sigma_i$ as a $\sigma$-field generated by a triangular array so that:
%
\[
\begin{footnotesize}
\Sigma_{i} =\sigma \left\{\begin{array}{ccccc}
  \xi_1 & \eta_{12}&... &\eta_{1,i-1} &\eta_{1i}\\
      &\xi_2 &...&\eta_{2,i-1} & \eta_{2i}\\
      & &...&..\\
      & &\xi_{i-2} &\eta_{i-2,i-1},& \eta_{i-2,i}\\
      &&&\xi_{i-1} & \eta_{i-1,i}\\
      &&&&\xi_{i}
\end{array}\right\}; \ \ 
\Sigma_{i-1} =\sigma \left\{\begin{array}{cccc}
  \xi_1 & \eta_{12}&... &\eta_{1,i-1}\\
      &\xi_2 &...&\eta_{2,i-1}\\
      & &...&..\\
      & &\xi_{i-2} &\eta_{i-2,i-1}\\
      &&&\xi_{i-1} 
\end{array}\right\}
\end{footnotesize}
\]
%

%
\noindent Observe that $Z_{n-1}-E(Z_{n-1})=\sum_{i=1}^{n}d_i$, $d_i$ is $\Sigma_i$ measurable, and  $E(d_i|\Sigma_{i-1})=0$.
 Therefore, the variance of $Z_n$ can be written as:
\begin{align*}
    \var \ Z_{n-1} = E\left(\sum_{i=1}^n d_i\right)^2=\sum_{i=1}^n E(d_i^2) + 2\sum_{i<j} E(d_id_j) 
\end{align*}
Now, for $i \neq j$, observe that:
\begin{align*}
    E(d_id_j)&=E(E(d_id_j|\Sigma_i))=E(d_i)E(d_j|\Sigma_i)\\
    &=E(d_i)(E[E(S_n|\Sigma_j)|\Sigma_i]-E[E(S_n|\Sigma_{j-1})|\Sigma_i])=0
\end{align*}

For the jackknife estimate, we have that: 
\begin{align*}
    E\left(\sum_{i=1}^n(Z_{n,i}-\bar Z_n)^2\right) = \sum_{i<j}\frac{E(Z_{n,i}-Z_{n,j})^2}{n}=\frac{(n-1)\cdot E(Z_{n,1}-Z_{n,2})^2}{2}
\end{align*}

We also denote by $\Sigma_{i:j}$, the sigma field containing all information of random variables $\xi_i,\ldots, \xi_j$, and $\eta_{k\ell}, i\leq k< \ell\leq j$. Now define $\mathcal{A}$ as  $\Sigma_{3:i+1}$.  
Since $Z_{n-1}$ is invariant to node-permutation, $\mathcal{A}$ is independent of $\sigma(\xi_2,\eta_{23}, \ldots,\eta_{2n})$ and $\sigma(\xi_1,\eta_{13},\ldots,\eta_{1n})$,
\begin{align*}
    E(Z_{n,1}|\mathcal{A})=E(Z_{n,2}|\mathcal{A})
\end{align*}
%
Define:
\begin{align}
    &U=E(Z_{n,1}|\Sigma_{i+1})-E(Z_{n,1}|\mathcal{A}), \quad V=E(Z_{n,2}|\Sigma_{i+1})-E(Z_{n,2}|\mathcal{A})
\end{align}
Then, using the fact that $E[X^2|\Sigma_{i+1}]\geq E[X|\Sigma_{i+1}]^2$ for some $\Sigma_{i+1}$ measurable r.v. $X$, we have:
\begin{equation}
E(Z_{n,1}-Z_{n,2})^2 \geq E[E(Z_{n,1}|\Sigma_{i+1})-E(Z_{n,2}|\Sigma_{i+1})]^2 = E(U-V)^2
\end{equation}

Notice that conditional on $\mathcal{A}$, $U$ is a function of  $\{\xi_2,\eta_{23}, \ldots ,\eta_{2,i+1}\}$, while 
$V$ is a function of $\{\xi_1,\eta_{13},\ldots,\eta_{1,i+1} \}$.
Thus, $U$ and $V$ are conditionally independent. Then, since $\mathcal{A} \subset \Sigma_{i+1}$, by the tower property of conditional expectations, we have that: 
\begin{align*}
E(U-V)^2&=E(U^2)-2E(UV)+E(V^2)=E(U^2)+E(V^2)-2E(E(U|\mathcal{A})E(V|\mathcal{A}))\\
&=E(U^2)+E(V^2),
\end{align*}
Now, we expand $E(U^2)$ as follows: 
\begin{align*}
     E(U^2)&=E((E(Z_{n,1}|\Sigma_{(i+1)})-E(Z_{n,1}|\mathcal{A}))^2]\\
   &\stackrel{(i)}{=}E((E(Z_{n,1}|\Sigma_{2:i+1})-E(Z_{n,1}|\Sigma_{3:i+1}))^2]\\
    &\stackrel{(ii)}{=} E[(E(Z_{n,n}|\Sigma_{1:i})-E(Z_{n,n}|\Sigma_{1:i-1}))^2]
    \\
    &= E[(E(Z_{n-1}|\Sigma_i)-E(Z_{n-1}|\Sigma_{i-1}))^2] \nonumber=E(d_i^2)  \nonumber
\end{align*}
Step $(i)$ holds because the random variables associated with node $1$ are not present in $Z_{n,1}$. Step $(ii)$ holds because $\xi_1, \ldots \xi_n$ and $\eta_{ij}, 1\leq i<j\leq n$ are i.i.d random variables, and $E[Z_{n,1}|\Sigma_{2:i+1}]$ ( $E[Z_{n,1}|\Sigma_{3:i+1}]$ ) and $E[Z_{n,n}|\Sigma_{1:i}]$ ($E[Z_{n,n}|\Sigma_{1:i-1}]$) are equal in distribution.

Similarly, $EV^2=Ed_i^2$, $E(U-V)^2=2Ed_i^2$. Thus,
\begin{equation}
    E(Z_{n,1}-Z_{n,2})^2 \geq E(U-V)^2= 2Ed_i^2
\end{equation}
\begin{equation}
    E\left(\sum_{i=1}^n(Z_{n,i}-\bar{Z}_n)^2\right)=\frac{n-1}{2}E(Z_{n,1}-Z_{n,2})^2\geq(n-1)Ed_i^2=\var \ Z_{n-1}
\end{equation}
 \end{proof}

\section{Proof of Theorem~\ref{thm:jkconsistent}}\label{sec:suppthm2}
 For notational convenience, let $Z_n = \hat{P}(R)$ and let $Z_{n,i}$ denote the subgraph frequency defined in Eq~\ref{eq:normalized-estimator} with node $i$ removed:  
\begin{align}
Z_{n,i} = \rho_n^{-e} \frac{1}{{n-1 \choose p} \ |\mathrm{Iso}(R)| } \sum_{S \sim R, \ i \not \in V(S)} \mathbbm{1}(S = G_n[S])
\end{align}
We first present a lemma that will be used in the proof.  An identity relating the mean of leave-one-out jackknife estimates to a U-statistic plays an important role in the proof of jackknife consistency for U-statistics. Using a novel combinatorial argument, we show that a similar identity holds for normalized subgraph counts: 
\begin{lemma}\label{lem:zn-bar=zn}
Letting $Z_{n,i}$ and $Z_n$ be defined as above, we have that:
\begin{equation*}
    \bar{Z}_n:=\frac{1}{n}\sum_{i=1}^nZ_{n,i}=Z_n
\end{equation*}
\end{lemma}
\begin{proof}
For a subgraph with $p$ nodes and $e$ edges, denote the number of this subgraph in $G_n$ as $Q$. Denote the number of subgraphs node $i$ is involved in as $Q_i$. We now analyze $\sum_{i=1}^n Q_i$. For each vertex set with cardinality $p$, a given subgraph is counted once from each vertex.  Therefore, $\sum_{i=1}^nQ_i = pQ$.  

Observe that $Z_{n,i} + Q_i = Q$ since the set of subgraphs that do not contain node $i$ and the set of subgraphs that contain node $i$ are disjoint and their union gives the set of subgraphs counted in $Q$. It follows that:
\begin{equation*}
    \frac{1}{n}\sum_i Z_{n,i}= \frac{\frac{1}{n}\sum_i(Q-Q_i)}{{n-1 \choose p}\rho_n^e}=\frac{(n-p)Q}{n{n-1 \choose p}\rho_n^e}=\frac{Q}{{n \choose p}\rho_n^e}=Z_n.
\end{equation*}\\
\end{proof}
%
Now, we introduce the limiting value of the scaled variance, which represents the value we are aiming for with the jackknife. \citet{Bickel-Chen-Levina-method-of-moments} show that the  asymptotic behavior of $\hat{P}(R)$ is driven by a U-statistic corresponding to the edge structure of the subgraph.  For a subgraph $R$ with $V(R) = \{1,\ldots, p\}$, define the kernel:
\begin{align}
\label{eq:edge-structure-kernel}
h(x_1, \ldots, x_p) &=  \frac{1}{|Iso(R)|}\sum_{S \sim R, \ V(S) = \{1,\ldots, p\}} \prod_{(i,j)\in E(S)} w(x_i,x_j)    
\end{align}

Theorem 1 of \citet{Bickel-Chen-Levina-method-of-moments} establishes that:
%
\begin{align*}
n \cdot \mathrm{\var} \ \hat{P}(R) \rightarrow \sigma^2
\end{align*}
where $\sigma^2 = p^2 \zeta$ is the variance of the U-statistic with kernel $h$, with $\zeta = \mathrm{Var}(E(h(\xi_1, \ldots, \xi_p)|\xi_1))$. We will now scale the jackknife variance by $n$ to study its asymptotics. Let:
\begin{align}
\label{eq:ai}
    \alpha_i= Z_{n,i}- E(Z_{n,i}|\boldsymbol{\xi}_n), \quad \beta_i= E(Z_{n,i}|\boldsymbol{\xi}_n)
\end{align}
For simplicity we will use $\aibar$ (or $\bibar$) to denote the average of $\alpha_i$ (or $\beta_i$). 
Now, consider the following signal-noise decomposition:
\begin{align}
  n \cdot \sum_{i=1}^n (Z_{n,i}-\bar{Z}_n)^2  &= 
n \cdot \sum_{i=1}^n(\alpha_i-\aibar+\beta_i-\bibar)^2
    \notag\\ &=  n \cdot \sum_{i=1}^n(\alpha_i-\aibar)^2+2 n \cdot \sum_{i=1}^n(\alpha_i-\aibar)(\beta_i-\bibar)\notag\\
    &+ n \cdot \sum_{i=1}^n(\beta_i-\bibar)^2. \label{eq:signalnoise}
\end{align}

We start by bounding the third sum, which is the signal in our decomposition.  Observe that $\beta_i$ is a U-statistic with the kernel $h$ defined in (\ref{eq:edge-structure-kernel}); therefore, by  Theorem 1 and its following discussions of Chapter 5 in \citet{Lee-Ustats},  we have that:
\begin{align}\label{eq:betaisingle}
n \cdot \sum_{i=1}^n(\beta_i-\bibar)^2 \xrightarrow{P} \sigma^2
\end{align}

The result will follow if we show that the remaining two sums in the decomposition are negligible.  If the first sum is negligible, the Cauchy-Schwarz inequality would imply that:
\begin{align*}
n \cdot \sum_{i=1}^n(\alpha_i-\aibar)(\beta_i-\bibar) & \leq n \cdot \sqrt{\sum_{i=1}^n(\alpha_i-\aibar)^2 \cdot \sum_{i=1}^n(\beta_i-\bar{\beta}_n)^2} \xrightarrow{P} 0 
\end{align*}
It remains to show that: $n \cdot \sum_{i=1}^n(\alpha_i-\aibar)^2 \xrightarrow{P}0$. Now, observe that:
\begin{equation*}
    \sum_{i=1}^n(\alpha_i-\aibar)^2= \sum_{i=1}^n \alpha_i^2 -n \aibar^2
\end{equation*}

Expanding the square for $\sum_{i=1}^n \alpha_i^2$ we have that: 
\begin{align*}
    \sum_{i=1}^n \alpha_i^2 & = \sum_{i=1}^n (Z_{n,i}-E(Z_{n,i}|\boldsymbol{\xi}_n))^2 \\
    & =  \sum_{i=1}^n \begin{pmatrix}n-1 \\ k\end{pmatrix}^{-2} \sum_{S \sim R, \ i \not\in V(S)} (\rho_n^{-e}\psi(S)- W(S))\sum_{T \sim R, \ i \not\in V(T)} (\rho_n^{-e}\psi(T)- W(T))
\end{align*}
where $\psi(S)$ and $W(S)$ are given by:  
\begin{align*}
    \psi(S) &= \frac{1}{|Iso(R)|} \prod_{(i,j) \in E(S), \  S\sim R} A_{ij} \ \times \prod_{(i,j) \in \overline{E(S)}, \  S\sim R} 1-A_{ij},
    \\ W(S)&= \frac{1}{|Iso(R)|} \prod_{(i,j)\in E(S), \ S\sim R} w(\xi_i,\xi_j) \ \times \prod_{(i,j) \in \overline{E(S)}, \  S\sim R} 1-\rho_n w(\xi_i, \xi_j)
\end{align*}
and $\overline{E(S)}$ are $(i,j) \in V(S)\times V(S)$ that are not contained in $E(S)$. 
Now, similar to \citet{Lee-Ustats}, we group elements in the sum based on the number of elements in $V(S)\cap V(T)$. For each $|V(S)\cap V(T)|=c$, there are $n-2p+c$ terms in total.  It follows that:
\begin{align*}
     \sum_{i=1}^n \alpha_i^2 &= \begin{pmatrix}n-1 \\ p \end{pmatrix}^{-2}  \sum_{c=0}^p (n-2p+c) \sum_{|V(S)\cap V(T)|=c}(\rho_n^{-e}\psi(S)- W(S))(\rho_n^{-e}\psi(T)- W(T)) \\
     & = \begin{pmatrix}n-1 \\ p\end{pmatrix}^{-2}  \sum_{c=0}^p (n-2p+c) \sum_{|V(S)\cap V(T)|=c}\gamma(S,T), \ \ \text{say.}
\end{align*}
Now we turn to $n{\aibar}^2$; 
\begin{equation*}
    \aibar= \frac{1}{n} \sum_i Z_{n,i}  -  \frac{1}{n} \sum_i E(Z_{n,i} |\boldsymbol{\xi}_n)\stackrel{(i)}{=} Z_n - E(Z_n|\boldsymbol{\xi}_n) 
\end{equation*}
Equality (i) follows from Lemma~\ref{lem:zn-bar=zn}. 
Now expanding $\aibar^2$ in a similar manner, we have that
\begin{align*}
    \aibar^2=
    \frac{(n-p)^2}{n} \begin{pmatrix}n-1 \\ p\end{pmatrix}^{-2}  \sum_{c=0}^p\sum_{|V(S)\cap V(T)|=c} \gamma(S,T), 
\end{align*}

Then,
\begin{align*}
     n \cdot \sum_{i=1}^n(\alpha_i-\aibar)^2 &= \begin{pmatrix}n-1 \\ p\end{pmatrix}^{-2}  \sum_{c=0}^p \left(n-2p+c-\frac{(n-p)^2}{n}\right) \sum_{|V(S)\cap V(T)|=c}\gamma(S,T) \\
     &=\sum_{c=0}^p \sum_{|V(S)\cap V(T)|=c}\left(c-\frac{p^2}{n}\right)\cdot \begin{pmatrix}n-1 \\ 2\end{pmatrix}^{-2} \gamma(S,T)
\end{align*}
Now, taking expectations, we have that:
\begin{align*}
   &\quad E\left(\begin{pmatrix}n-1 \\ p\end{pmatrix}^{-2}  \sum_{c=0}^p\sum_{|V(S)\cap V(T)|=c} \gamma(S,T)\right)\\
   &=  E\left(\begin{pmatrix}n-1 \\ p\end{pmatrix}^{-2}  \sum_{c=0}^p\sum_{|V(S)\cap V(T)|=c}\biggl(\rho_n^{-e}\psi(S)- W(S)\biggr)\biggl(\rho_n^{-e}\psi(T)- W(T)\biggr) \right)\\ &=E \left[\sum \mathrm{Cov}(S,T|\boldsymbol{\xi}_n)\right] = o\left(\frac{1}{n}\right) 
\end{align*}
where the last line follows from the proof of Theorem 1 of \citet{Bickel-Chen-Levina-method-of-moments}.  

Now, by Markov inequality, we have that 
\begin{align}\label{eq:aiconv}
    n \cdot \sum_{i=1}^n(\alpha_i-\aibar)^2 \xrightarrow{P} 0
\end{align} and the result follows.  


\section{Proof of Theorem~\ref{thm:smooth}} \label{sec:suppthm3}
\begin{proof}
Let  $Z_{n,i} = (Z_{n,i}(1), \ldots Z_{n,i}(d))$, where $d$ is a constant w.r.t $n$ and each entry corresponds to a count functional with node $i$ removed.  Each count functional   may involve subgraphs of different sizes.
We will use a Taylor expansion around $\bar{Z}_{n}$.
\begin{align*}
    f(Z_{n,i})&=f\bigl(\bar Z_n) +\nabla f(\zeta_i)^T (Z_{n,i}-\bar{Z}_n)\\
    &=f\bigl(\bar Z_n) +\nabla f(\mu)^T (Z_{n,i}-\bar{Z}_n)+\underbrace{(\nabla f(\zeta_i)-\nabla f(\mu))^T (Z_{n,i}-\bar{Z}_n)}_{E_i},
\end{align*}
where $\zeta_i = (\zeta_{i1}, \ldots, \zeta_{id})=c_i Z_{n,i}+(1-c_i)\bar Z_n $ for some $c\in [0,1]$. 
Thus, we also have:
\begin{align}\label{eq:smoothdecomp}
\begin{split}
     f(Z_{n,i})-\overline{f(Z_{n,i})}&= \underbrace{\vphantom{E_i-\frac{1}{n}\sum_i E_i} \nabla f(\mu)^T (Z_{n,i}-\bar Z_n)}_{I_i}+\underbrace{ E_i-\frac{1}{n}\sum_i E_i}_{II_i}
\end{split}
\end{align}
For the first part we see that,
\begin{align}\label{eq:I}
    n \sum_i (I_i)^2&=
   n \nabla f(\mu)^T \left(\sum_i(Z_{n,i} - \bar{Z}_n)(Z_{n,i} - \bar Z_n)^T \right)\nabla f(\mu)
\end{align}
We will first show that the inner average of the above expression converges to the covariance matrix of $Z_{n,i}$ (recall that here we are considering a finite dimensional vector). Extending the same argument in Eq~\ref{eq:signalnoise} to finite dimensional $Z_{n,i}$'s (and $\alpha_i$ and $\beta_i$'s defined in Eq~\ref{eq:ai}),
\begin{align*}
&n\sum_i(Z_{n,i} - \bar{Z}_n)(Z_{n,i} - \bar{Z}_n)^T \\
&= n\sum_i \left( (\alpha_i-\aibar)(\alpha_i-\aibar)^T+ (\alpha_i-\aibar)(\beta_i-\bibar)^T+(\beta_i-\bibar)(\alpha_i-\aibar)^T\right.\\
&\left.\qquad \qquad +(\beta_i-\bibar)(\beta_i-\bibar)^T\right)
\end{align*}
By Theorem 9 of \citet{arvesen1969} we have that:
\begin{align}\label{eq:betaimulti}
   n\sum_i (\beta_i-\bibar)(\beta_i-\bibar)^T  \cp \Sigma
\end{align}

Above, $\Sigma$ is the covariance matrix of a multivariate U-statistic with kernels $(h_1, \ldots, h_d)$, where each $h_j$ is the kernel corresponding to the count functional in the $j^{th}$ coordinate of the vector $Z_{n}$ (see Eq \ref{eq:edge-structure-kernel}).
\bk
\bk
Now combining Eq~\ref{eq:betaimulti} with Eq~\ref{eq:I} we see that,
\begin{align}\label{eq:Isq}
    \left|n \sum_i (I_i)^2 - f(\mu)^T \Sigma f(\mu) \right| &\leq \|\df\|^2 n\sum_i \|\alpha_i-\aibar\|^2\notag\\
    &+2n\|\df\|^2\sum_i |(\alpha_i-\aibar)^T (\beta_i-\bibar)|
\end{align}

The first part is $o_p(1)$ by an analogous argument leading to Eq~\ref{eq:aiconv}. For the second part, we see that an application of Cauchy Schwarz inequality gives:
\begin{align*}
    n\sum_i |(\alpha_i-\aibar)^T (\beta_i-\bibar)|\leq \sum_{j=1}^d \sqrt{\left(\sum_i n(\alpha_{i}(j)-\aibar(j))^2\right) \left(n \sum_i (\beta_{i}(j)-\bibar(j))^2\right)}
\end{align*}
The first part inside the square root is $o_p(1)$ due to Eq~\ref{eq:aiconv}, and the second part is $O_p(1)$ by Eq~\ref{eq:betaisingle}. Using this in conjunction with Eq~\ref{eq:Isq} and since $\|\df\|$ is bounded, we see that:
\begin{align*}
    \left|n \sum_i (I_i)^2 - \df^T \Sigma \df\right|=o_p(1)
\end{align*}
All that remains now is to show that part $II_i$ in Eq~\ref{eq:smoothdecomp} is negligible even when summed and multiplied by $n$. First note that $(II_i)^2\leq E_i^2$.
    \begin{align}\label{eq:smoothpart2}
    n\sum_i (II_i)^2 &\leq n\sum_i |(\nabla f(\zeta_i)-\nabla f(\mu))^T (Z_{n,i}-\bar Z_n)|^2\notag\\
    &\leq \max_i\|\nabla f(\zeta_i)-\nabla f(\mu)\|^2\left(n\sum_i (Z_{n,i}-\bar{Z}_n)^T(Z_{n,i}-\bar Z_n)\right)
\end{align}
Theorem~\ref{thm:jkconsistent} shows that the second part in the RHS of Eq~\ref{eq:smoothpart2} is $O_p(1)$. We will now show that the first part is asymptotically negligible.

Observe that:
\begin{align*}
     \max_i \| \zeta_i- \mu \| &\leq \max_i \ c_i\|
      Z_{n,i}- \mu \|+ \max_i \ (1-c_i)\|\bar Z_n-\mu \| 
      \\ & \leq \sqrt{d} \cdot \max_{i,j}|Z_{n,i}(j)- \bar Z_n(j)| + 2\|\bar Z_n-\mu \|  
      \\ & \leq \sqrt{d} \cdot \max_j \sqrt{\sum_{i=1}^n\left(Z_{n,i}(j)- \bar Z_n(j)\right)^2} + 2\| Z_n-\mu \|   
\end{align*}

Above, $\bar{Z}_n=Z_n$ by Lemma~\ref{lem:zn-bar=zn}. The first term on the RHS converges in probability to 0 from our Theorem~\ref{thm:jkconsistent}. By Theorem 1 of~\cite{Bickel-Chen-Levina-method-of-moments}, $\|Z_n - \mu \|$  is also negligible. 
Since $ \max_i \| \zeta_i- \mu \|=o_p(1)$ and $\nabla f$ is continuous at $\mu$, by continuity,  we have that $\max_i\|\nabla f(\zeta_i)-\nabla f(\mu)\|^2= o_p(1)$.  Since the second term on the RHS of Eq~\ref{eq:smoothpart2} is $O_p(1)$ from our previous argument and the first term is $o_p(1)$, it follows that the LHS of Eq~\ref{eq:smoothpart2} is $o_p(1)$.    

Let $\mu_n=E[Z_n]$. Note that if one counts subgraphs by an exact match as in~\cite{Bickel-Chen-Levina-method-of-moments} $\mu_n\rightarrow \mu$. If one counts subgraphs via edge matching, $\mu_n=\mu$. Thus, both these types of subgraph densities, which asymptotically have the same limit, can be handled by our theoretical results. 
By Theorem 3.8 in \citet{van2000asymptotic},
\begin{equation*}
    \sqrt{n}(f(Z_n)-f(\mu_n)) \rightsquigarrow N(0,\nabla f(\mu)^T \Sigma \nabla f(\mu) )
\end{equation*}
This shows that the jackknife estimate of variance converges to the asymptotic variance of $f(Z_n)$.











\end{proof}

\section{Proof of Proposition~\ref{prop:degree}}\label{sec:suppEDunbiased}
Throughout this section, we will use the notation $x_n\asymp y_n$ to denote $x_n=y_n(1+o(1))$.
Before presenting the proof, we present two accompanying lemmas which will be used in the proof of Proposition~\ref{prop:degree}.

\begin{lemma}\label{lem:var}
Denote $\dni$ the degree of node  $i$ in the size $n$ graph. \begin{align*}
\begin{split}
    \sum_{i=1}^{n-1}\var\left(\frac{\dni}{{n-1 \choose 2}\rho_n}\right)&\asymp \frac{4}{n^3}E(\var\sum_{k,k\neq i}w(\xi_i,\xi_k)|\xi_i) \\ &+\frac{4}{n}\var[E (w(\xi_i,\xi_k)|\xi_i)] + O(n^{-2}\rho_n^{-1}).
\end{split}
\end{align*}
\end{lemma}

\begin{lemma}\label{lem:cov}
Denote $\dni$ the degree of node  $i$ in the size $n$ graph. 
\begin{equation*}
    \sum_{i,j,i\neq j}\cov\left(\frac{\dni}{{n-1 \choose 2}\rho_{n}},\frac{\dnj}{{n-1 \choose 2}\rho_{n}}\right) \asymp \frac{4}{n}\times 3 \var(E[w(\xi_i,\xi_j)|\xi_i])  + O(n^{-2}\rho_n^{-1})
\end{equation*}
\end{lemma}

We will use the above to lemmas to prove Proposition~\ref{prop:degree}, which we now present.

\begin{proof}
Denote $D_n$ as the total number of edges in graph $G_n$. By definition,
\begin{equation*}
    Z_n=\frac{D_n}{{n \choose 2}\rho_n}
\end{equation*}
Denote $\dni$ the degree of node  $i$ in the size $n$ graph. 
We have that $E\dni=E\dnj$ for any node pair.  Thus the jackknife estimate of edges for a graph with node $i$ removed  is $D_n$ minus the degree of node $i$. 
Define
\begin{equation}
    \gamma_n={n-1 \choose 2}\rho_n; \quad  \gamma_n'={n-1 \choose 2}\rho_{n-1}
\end{equation}
Then by definition, we have
\begin{equation*}
    Z_{n,i}=\frac{D_n-\dni}{{n-1 \choose 2}\rho_n} = \frac{D_n-\dni}{\gamma_n}
\end{equation*}
Then, the jackknife estimate is
\begin{align}\label{eq:jkLHS}
    E\sum_{i=1}^n(Z_{n,i}-\bar Z_n)^2 &= \frac{1}{2n}\sum_{i\neq j}E(Z_{n,i}-Z_{n,j})^2= \frac{1}{2n}\sum_{i\neq j} E\left(\frac{\dni-\dnj}{\gamma_n}\right)^2 \notag\\ &= \sum_{i=1}^{n-1}\var\left(\frac{\dni}{\gamma_{n}}\right)-\frac{1}{n}\sum_{i\neq j}\cov\left(\frac{\dni}{\gamma_n},\frac{\dnj}{\gamma_n}\right)
\end{align}
\noindent whereas the total number of degrees in a $(n-1)$ graph is $D_{n-1}=\sum_{i=1}^{n-1}\dnip/2$ as each edge is counted 2 times from each node. We first obtain an expression for $\var \ Z_{n-1}$.

\begin{align}\label{eq:jkRHS}
    \var \ Z_{n-1} &=\var\left(\frac{\sum_{i=1}^{n-1}\dnip/2}{{n-1 \choose 2}\rho_{n-1}}\right)=\frac{1}{4}(n-1)  \var\left(\frac{\dnip}{\gamma_{n}'}\right)\\
    &+\frac{1}{4}\sum_{i,j,i\neq j}\cov\left(\frac{\dnip}{\gamma_{n}'},\frac{\dnjp}{
    \gamma_{n}'}\right)
\end{align}


For the second term in the R.H.S of Eq~\ref{eq:jkLHS}, from Lemma ~\ref{lem:cov}, it is easy to check that it is $O(n^{-2})$. Thus scaling  Eq~\ref{eq:jkLHS} by $n-1$ we have,
\begin{align}\label{eq:scaled-jk-LHS}
\begin{split}
    &(n-1)E\sum_{i=1}^n(Z_{n,i}-\bar Z_n)^2 = (n-1)\sum_{i=1}^n\var\left(\frac{\dni}{\gamma_{n}}\right) + O\left(\frac{1}{n}\right)\\
    &=\frac{4}{n^2}E[\var\sum_{k,k\neq i}w(\xi_i,\xi_k)|\xi_i] + 4 \var[E(w(\xi_i,\xi_k)|\xi_i)] +O\left(\frac{1}{n\rho_n}\right)+ O\left(\frac{1}{n}\right)
\end{split}
\end{align}

Plugging in Lemma \ref{lem:cov} into the second term of R.H.S of Eq~\ref{eq:jkRHS} and scaling Eq~\ref{eq:jkRHS} by $n-1$, we have

\begin{align}\label{eq:scaled-jk-RHS}
\begin{split}
&(n-1)\var \ Z_{n-1}\\
&=\frac{1}{n^2}E[\var\sum_{k,k\neq i}w(\xi_i,\xi_k)|\xi_i] + \var[E(w(\xi_i,\xi_k)|\xi_i)] \\ &+ 3\var[E(w(\xi_i,\xi_k)|\xi_i)]+O\left(\frac{1}{n\rho_n}\right)  \\  
&= \frac{1}{n^2}E[\var\sum_{k \neq i}w(\xi_i,\xi_k)|\xi_i] + 4 \var[E(w(\xi_i,\xi_k)|\xi_i)]+O\left(\frac{1}{n\rho_n}\right) 
\end{split}
\end{align}

\noindent The difference between Eqs~\ref{eq:scaled-jk-LHS} and~\ref{eq:scaled-jk-RHS} is:
\begin{equation}
     (n-1)E(Z_{n,i}-\bar Z_n)^2-(n-1)\var \ Z_{n-1}=\frac{3}{n^2}E[\var\sum_{k, k\neq i}w(\xi_i,\xi_k)|\xi_i] + O\left(\frac{1}{n\rho_n}\right).
\end{equation}
Note that, we also have:
\begin{align}
    \frac{1}{n^2}E[\var\sum_{k,k\neq i}w(\xi_i,\xi_k)|\xi_i]=\frac{1}{n}E[\var ( w(\xi_i,\xi_k)|\xi_i)]=O\left(1/n\right)\label{eq:errorterm}
\end{align} 

Eq~\ref{eq:errorterm} establishes Eq~\ref{eq:prop1}. Furthermore, in conjunction with Eqs~\ref{eq:jkRHS} and~\ref{eq:jkLHS}, it also shows that both $(n-1) E\sum_{i=1}^n(Z_{n,i}- \overline{Z_{n}})^2$ and $ (n-1) \var \ Z_{n-1}$ converge to positive constants. This concludes our proof.
\end{proof}


\bk

We now present the proofs of Lemmas~\ref{lem:var} and~\ref{lem:cov}.

\begin{proof}[\textbf{Proof of Lemma~\ref{lem:var}}]

Applying law of total variance,
\begin{equation}\label{eq:vardecomp}
    \sum_{i=1}^{n-1}\var\left(\frac{\dni}{\gamma_{n}}\right)=\sum_{i=1}^{n-1}\var\left[E\left(\frac{\dni}{\gamma_n}\bigg{|}\xi\right)\right] + \sum_{i=1}^{n-1}E\left[ \var\left(\frac{\dni}{\gamma_{n}}\bigg{|}\xi\right)\right].
\end{equation}
We now show that the second term on the RHS of the above equation is small.
\begin{align}
    \sum_{i=1}^{n-1}E\left[ \var\left(\frac{\dni}{\gamma_{n}}\bigg{|}\xi\right)\right] &=\sum_{i=1}^{n-1}E\left[\var\left(\frac{\sum_{j\neq i}A_{ij}}{{n \choose 2}\rho_{n}}\bigg{|}\xi\right)\right]\notag\\
    &=\sum_{i=1}^{n-1}E\left(\frac{\sum_{j\neq i}\rho_n w(\xi_i,\xi_j)(1-\rho_n w(\xi_i,\xi_j))}{{n \choose 2}^2\rho_{n}^2}\right)  \nonumber \\
    &\asymp \sum_{i,j,i\neq j} \frac{\rho_n E[w(\xi_i,\xi_j)]}{n^4\rho_{n}^2}=O(n^{-2}\rho_n^{-1})
\end{align}
For the first term on the RHS of Eq~\ref{eq:vardecomp}, for any fixed $i$, we have:
\begin{align}
\label{eq:rhs2}
\begin{split}
    &\var\left(E\left[\frac{\dni}{\gamma_n}\bigg{|}\xi\right]\right) = \var E\left(\frac{\sum_{k,k\neq i}A_{ik}}{\frac{(n-1)(n-2)}{2}\rho_n}\bigg{|}\xi\right) 
    \asymp \frac{4}{n^4}\var\left(\sum_{k,k\neq i}w(\xi_i,\xi_k)\right)\\
    &\asymp \frac{4}{n^4}E\left(\var\sum_{k,k\neq i}w(\xi_i,\xi_k)|\xi_i\right)+\frac{4}{n^4}\var\left(E\sum_{k, k\neq i}w(\xi_i,\xi_k)|\xi_i\right).
\end{split}
\end{align}
Exchanging the sum and expectation in the second term, we can also write,
\begin{equation}
\label{eq:rhs1}
    \frac{4}{n^4}\var\left(E\sum_{k,k\neq i}w(\xi_i,\xi_k)|\xi_i\right)=\frac{4}{n^2}\var[E (w(\xi_i,\xi_k)|\xi_i)].
\end{equation}
Since Eq~\ref{eq:vardecomp} involves a sum over $n-1$ identical terms, owing to the fact that $\{\xi_i\}$ are i.i.d, we get the result by multiplying Eq~\ref{eq:rhs2} and~\ref{eq:rhs1} by $n-1$.
\end{proof}

\begin{proof}[\textbf{Proof of Lemma~\ref{lem:cov}}]
We decompose the covariance into
\begin{align}\label{eq:covdecomp}
    \sum_{i,j,i\neq j}\cov\left(\frac{\dni}{\gamma_{n}},\frac{\dnj}{\gamma_{n}}\right)&= \sum_{i,j,i\neq j}\cov\left(E\left[\frac{\dni}{\gamma_{n}}\bigg{|}\xi\right],E\left[\frac{\dni}{\gamma_{n}}\bigg{|}\xi\right]\right) \notag\\
    &+ \sum_{i,j,i\neq j}E \left[\cov\left(\frac{\dni}{\gamma_{n}},\frac{\dnj}{\gamma_{n}}\bigg{|}\xi\right)\right] .
\end{align}
The second term on the RHS of the above equation is small as shown before. 
\begin{align*}
\begin{split}
     &\sum_{i,j,i\neq j}E \left[\cov\left(\frac{\dni}{\gamma_{n}},\frac{\dnj}{\gamma_{n}}\bigg{|}\xi\right)\right]  \\&= \sum_{i,j,i\neq j}E\left[\cov\left(\frac{\sum_{k,k\neq i} A_{ik}}{\gamma_n},\frac{\sum_{s,s\neq j} A_{js}}{\gamma_n}\bigg{|}\xi\right)\right] \\
    &\asymp \frac{1}{n^4\rho_n^2}\sum_{i,j}E[\var(A_{ij}|\xi)]\\
    & \asymp \frac{1}{n^2\rho_n^2}\rho_n E[w(\xi_i,\xi_j)]=O(n^{-2}\rho_n^{-1})
\end{split}
\end{align*}
For the first term in Eq~\ref{eq:covdecomp}, for any fixed $i$ and $j$, we have
\begin{align}\label{eq:covdidj}
\begin{split}
     &\cov\left(E\left[\frac{\dni}{\gamma_n}\bigg{|}\xi\right],E\left[\frac{\dnj}{\gamma_n}\bigg{|}\xi\right]\right)\\ &= \cov\left(\frac{\sum_{k}^{ k\neq i}w(\xi_i,\xi_k)\rho_n}{\frac{(n-1)(n-2)}{2}\rho_n},\frac{\sum_{s}^{s\neq j}w(\xi_j,\xi_s)\rho_n}{\frac{(n-1)(n-2)}{2}\rho_n}\right)  \\
    &\asymp \frac{4}{n^4}\cov\left(\sum_{k, k\neq i}w(\xi_i,\xi_k),\sum_{s,s\neq  j}w(\xi_j,\xi_s)\right)  \\
    &= \frac{4}{n^4}\sum_{k, k\neq i}\sum_{s,s\neq j} \cov(w(\xi_i,\xi_k),w(\xi_j,\xi_s)).
\end{split}
\end{align}
Let $S_i=\{i,k\}$, and $S_j=\{j,s\}$ be two pairs containing $i$ and $j$ respectively. Some algebraic manipulation yields,
\begin{equation}
\begin{split}
    \sum_{k, k\neq i}\sum_{s,s\neq j} \cov(w(\xi_i,\xi_k),w(\xi_j,\xi_s))
= \sum_{|S_i\cap S_j|=1}\cov(w(\xi_i,\xi_k),w(\xi_j,\xi_s))\\ +
   \sum_{|S_i\cap S_j|=2}\cov(w(\xi_i,\xi_k),w(\xi_j,\xi_s)).
\end{split}
\end{equation}
In the R.H.S of the above expression, the second summation has $n(n-1)$ terms, whereas the first has $n(n-1)(n-2)$ terms. Furthermore, for $|S_i\cap S_j|=2$, it is easy to see that $\cov(w(\xi_i,\xi_k),w(\xi_j,\xi_s))$ is simply the variance of $\var(w(\xi_i,\xi_k))$ which is positive. For $|S_i \cap S_j|=1$, W.L.O.G. let $S_i=\{i,u\}$ and $S_j=\{j,u\}$. Conditioned on the shared node $\xi_u$, 
\begin{align}\label{eq:cov-var-cond}
\cov(w(\xi_i,\xi_u),w(\xi_j,\xi_u))  &=
\cov[E(w(\xi_i,\xi_u)|\xi_u),E(w(\xi_j,\xi_u)|\xi_u)] \notag\\
&= \var(Ew(\xi_i,\xi_u)|\xi_u) 
\end{align}
which is also positive.
Hence the contribution of the first sum is of a larger order. 

Now we enumerate all the ways in which $S_i$ and $S_j$ can have a node in common, with the constraint of $i\neq j$. For any fixed $i$ and $j$, s.t. $i\neq j$,  $|S_i\cap S_j|=1$ means that there is $1$ common node in  $S_i=\{i,k\}$ and  $S_j=\{j,s\}$.  There are three possible cases, $i=s$, $k=j$, $k=s$. Thus, Eq~\ref{eq:covdidj} can be expanded as (W.L.O.G, suppose $i=s$),
\begin{align}\label{eq:covEij}
     \cov\left(E\left[\frac{\dni}{\gamma_n}\bigg{|}\xi\right],E\left[\frac{\dnj}{\gamma_n}\bigg{|}\xi\right]\right) &\asymp \frac{4}{n^4}[3(n-2)\cov(w(\xi_i,\xi_k),w(\xi_j,\xi_i)) ] \nonumber \\ &=
   \frac{4}{n^3}\times 3 \cov(w(\xi_i,\xi_k),w(\xi_j,\xi_i)) \nonumber \\
   &\stackrel{(i)}{=}\frac{4}{n^3}\times 3 \var(E(w(\xi_i,\xi_k))|\xi_i)  
\end{align}
  
Step $(i)$ uses an analogous argument from Eq~\ref{eq:cov-var-cond}, and conditions on $\xi_i$. 

Eq~\ref{eq:covdecomp} involves a sum over all $(i,j)$ pairs, $i\neq j$, , owing to the fact that $\{\xi_i\}$ are i.i.d, we get the result by multiplying Eq~\ref{eq:covEij} by $n(n-1)$.
\end{proof}

\section{Proof of Proposition~\ref{prop:centering-result}}

Before we state the proof of our result, recall the following well-known relationship between uniform integrability and convergence of moments. See for example, Theorem 25.12 of \citet{billingsley-probability-measure}.
\begin{proposition}
\label{prop:uniform-integrability-moments}
Suppose that $X_n \rightsquigarrow X$ and $\{X_n\}_{n \geq 1}$ is uniformly integrable. Then, $E(X_n) \rightarrow E(X)$.
\end{proposition}

Now we will prove our proposition below:

\begin{proof}
In what follows let $X_n:=\tau_n[\hat{\theta}_n -E(\hat{\theta}_n)]$ and $V_n = \tau_n \cdot U_n$.  Recall that $U_n=\hat{\theta}_n-\theta$.
While our result here is more general, in a jackknife context, $\hat{\theta}_n = Z_n$ following the notation that we use elsewhere. Consider the following decomposition:
\begin{align*}
\tau_n[\hat{\theta}_n - E(\hat{\theta}_n)] = \tau_n[\hat{\theta}_n - \theta] + E(\tau_n[\theta - \hat{\theta}_n]) 
\end{align*}
Since $\{V_n^2\}_{n \geq 1}$ is uniformly integrable, it follows that $\{V_n\}_{n \geq 1}$ is also uniformly integrable.  Therefore, by Proposition \ref{prop:uniform-integrability-moments}, $E(\tau_n[\theta - \hat{\theta}_n]) \rightarrow 0$. By Slutsky's Theorem, it follows that $\tau_n[\hat{\theta}_n - E(\hat{\theta}_n)] \rightsquigarrow U$.  

To show that the variances converge to the same value, observe that $E(X_n^2)$ is given by:
\begin{align*}
E(X_n^2) = E(V_n^2) - (E(V_n))^2
\end{align*}
First, $V_n^2 \rightsquigarrow U^2$ by continuous mapping theorem. Since $\{V_n^2\}_{n \geq 1}$ is uniformly integrable, $E(V_n^2) \rightarrow E(U^2)$ by Proposition \ref{prop:uniform-integrability-moments} again. Finally, $(EV_n)^2 \rightarrow 0$ and the result follows.   
\end{proof}

\section{Additional theory}\label{sec:addlth}

 It should be noted that a similar inequality for a closely related procedure has an even simpler proof.  This alternative procedure does not require the functional to be invariant to node permutation and allows flexibility with the leave-one-out estimates.  However, the resulting estimate is often not sharp. More concretely, let $Z_n$ denote a function of $A^{(n)}$ and let $\widetilde{Z}_{n,i}$ be an arbitrary functional calculated on a graph with node $i$ removed. Consider the following estimator:
 \begin{align}
\vj \ Z_n = \sum_{i=1}^n (Z_n-\widetilde{Z}_{n,i})^2
 \end{align}
 Combining the aforementioned filtration with arguments in \citet{boucheron-lugosi-massart-concentration-chapter} leads to the following inequality:
 \begin{proposition}[Network Efron-Stein, alternative version]
 \begin{align}
\var \ Z_n\leq E( \vj \ Z_n )
 \end{align} 
 \end{proposition}

\section{Additional experiments}
\label{sec:suppexp}

%

We first present Tables~\ref{tab:3college_size} and~\ref{tab:6college_size} with details of the networks we used in our real data experiments in Section~\ref{sec:exp} of the main paper. 
\begin{table}[h]
    \centering
     \caption{\label{tab:3college_size}Details of college networks for first real data experiment (see Figure~\ref{fig:3colleges} of main paper)}
     \vspace{3mm}
    \begin{tabular}{|lccr|}
        \hline 
       &Caltech &Williams &Wellesley  \Tstrut\Bstrut\\
       \hline 
         Nodes &769 &2790 &2970 \Tstrut\Bstrut\\[1ex]
         Edges & 16656& 112986& 94899\\[1ex]
         Average Degree & 43.375 & 63.927 &81.023 \\[1ex]
          \hline
    \end{tabular}
\end{table}
\begin{table}[h]
    \centering
     \caption{\label{tab:6college_size}Details of college networks for second real data experiment (see Figure~\ref{fig:6colleges} of main paper)}
     \vspace{3mm}
    \begin{tabular}{|lcccccr|}
        \hline
      &Berkeley &Stanford &Yale &Princeton &Harvard &MIT \Tstrut \\[1ex]
       \hline
         Nodes &22937  &11621 &8578 &6596 &15126 &6440\Tstrut\\[1ex]
         Edges & 852444&568330&405450&293320&824617&251252\\[1ex]
         Average Degree  &74.332 &97.819&94.544 & 88.952 &109.040& 78.040\\[1ex]
          \hline
    \end{tabular}
    \label{tab:college_size}
\end{table}

For our real data experiments, (Section~\ref{sec:exp} of main paper) we compared subsampling with jackknife on the three colleges (see Figure~\ref{fig:3colleges}). For simplicity, for the second experiment comparing three pairs of college networks (see Figure~\ref{fig:6colleges}),  we only showed the confidence intervals obtained using jackknife. Here, in Figure~\ref{fig:ss-jk-6colleges}, for completeness, we present  confidence intervals for test sets constructed from the six college networks using both jackknife and subsampling with different choices of $b$. This again shows that jackknife CI's mostly are in agreement with those obtained from subsampling. \bk

\begin{figure*}
\begin{center}
\includegraphics[width=\textwidth]{plots/ss-jk-6colleges.pdf}
\vspace{-0.5cm}
\caption{Confidence intervals of subsampling and jackknife in calculating triangle, two-star densities and normalized transitivity in the example of six college Facebook networks test sets. The four CIs for each college are in the order of jackknife, subsampling with b=0.05n, b=0.1n, and b=0.2n respectively.}
\label{fig:ss-jk-6colleges}
\end{center}
\vskip -0.2in
\end{figure*}
\begin{figure*}[h]
\begin{center}
\includegraphics[width=\textwidth]{plots/time-3colleges-all.pdf}
\vspace{-0.5cm}
\caption{Computation time of jackknife compared to subsampling in calculating triangle, two-star densities and normalized transitivity in the example of three college Facebook networks.}
\label{fig:runtime-3colleges}
\end{center}
\vskip -0.2in
\end{figure*}

In addition, we show the timing results our real data experiments. 
Figure~\ref{fig:runtime-3colleges} shows computation time of the three college example of Facebook network data (see Figure~\ref{fig:3colleges}). We demonstrate the triangle, two-star densities and normalized transitivity variance computation time using jackknife and subsampling with $b=0.05n$, $b=0.1n$ and $b=0.2n$, $B=1000$ in each college network.

In
Figure~\ref{fig:runtime-6colleges}, we show 
the computation time of variance estimation for the same statistics on the test sets for the same set of algorithms. Since we split training and test set in half, the training sets have approximately the same time.

These figures show that, it is possible to implement jackknife in a computationally efficient manner when there is nested structure in the subgraph counts. In all these cases, we see that for the larger networks, subsampling with large $b$ is often considerably slower than jackknife. \bk
\begin{figure*}[t]
\begin{center}
\includegraphics[width=0.85\textwidth]{plots/time-6colleges-all.pdf}
\vspace{-0.5cm}
\caption{Computation time of jackknife compared to subsampling in calculating triangle, two-star densities and normalized transitivity in the example of six college Facebook networks test sets.}
\label{fig:runtime-6colleges}
\end{center}
\vskip -0.2in
\end{figure*}

\clearpage
\bibliography{example_paper}
\bibliographystyle{icml2019}